\newcommand{\R}{\ensuremath{\mathbb{R}}}
\newcommand{\N}{\ensuremath{\mathbb{N}}} 
\newcommand{\Z}{\ensuremath{\mathbb{Z}}} 
\newcommand{\C}{\ensuremath{\mathbb{C}}} 
\newcommand{\LL}{\ensuremath{\mathcal{L}}}
\newcommand{\delim}[3]{\left#1 #3 \right#2}  
\newcommand{\pin}[1]{\delim{\langle}{\rangle}{#1}} 
\def \ds {\displaystyle}  
\def \tn {\textnormal}  
\newcommand{\cqd}{\mbox{\framebox [1ex]{}}}
\renewcommand{\cqd}{\qed}
\newtheorem{theorem}{Theorem}[section]
\newtheorem{lemma}[theorem]{Lemma}
\newtheorem{corollary}[theorem]{Corollary}
\newtheorem{definition}[theorem]{Definition}
\newtheorem{proposition}[theorem]{Proposition}
\newtheorem{remark}[theorem]{Remark}
\title[Rate of Convergence]{Rate of Convergence of Attractors for Singularly Perturbed Semilinear Problems}
\numberwithin{equation}{section} \numberwithin{theorem}{section}
\author{} 
\begin{document}

\date{}

\maketitle

\begin{center}
{\sc A.N. Carvalho and L. Pires} 
\end{center}


\begin{abstract}
We exhibit a class of singularly perturbed parabolic problems which the asymptotic behavior can be described by a system of ordinary differential equation.  We estimate the convergence of attractors in the Hausdorff metric by rate of convergence of resolvent operators.  Application to spatial homogenization and large diffusion except in a neighborhood of a point will be considered.
\end{abstract}

\section{Introduction}

There are many parabolic problems whose asymptotic behavior is dictated by a system of Morse-Smale ordinary differential equations, for example, reaction diffusion equation where the diffusion coefficient become very large in all domain and reaction diffusion equation where the diffusion coefficient is very large except in a neighborhood of a finite number of points where it becomes small. These kind of problems was considered in the works \cite{Carvalho2010a,Carvalho1994,Carvalho2004,Conley1978,Fusco1987} and \cite{Hale1986}, where well-posedness, functional setting and convergence of attractors was studied. In general it is considered a family of parabolic problems depending on a positive parameter $\varepsilon$ and when $\varepsilon$ converges to zero, it is obtained a limiting ordinary differential equation that contains all dynamic of the problem. Therefore the partial differential equation that generated an abstract parabolic problem in an infinite dimensional phase space can be considered in large time behavior as an ordinary differential equation in a finite dimensional space.   

One of the most important part in these works is the study of an eigenvalue problem to determine that the first eigenvalues converge to matching eigenvalues of the limiting ODE, whereas all the others  blow up, establishing the existence of a large gap between the eigenvalues as the parameter $\varepsilon$ goes to zero. The large gap between the eigenvalues is known as gap condition. It is the main property that enable us construct an invariant manifold given as graph of certain Lipschitz map defined in a finite dimensional space that becomes flat, as the parameter varies. These manifolds are differentiable and since they are invariant we can restrict the flow to them and project the flow in a finite dimensional space. The theory of invariant manifold is well developed in many works, see \cite{Bernal1993,Carvalho2004,Henry1980} and \cite{Sell2002}.

The main property about  Morse-Smale problems is structural stability under small $C^1$ perturbation, thus, it is interesting treat the problems described above as singular perturbation of a Morse-Smale limiting ODE, therefore there is an isomorphism between the attractors in the sense that all equilibrium points and connection between them are preserved, see \cite{Bortolan}.  

We are interested in knowing how fast the dynamic above approach each other, more precisely, we want to estimate the convergence of attractors when the parameter $\varepsilon$ converges to zero. In this direction we follow the works \cite{Arrieta,Babin1992} and \cite{Santamaria2014}, where a rate of convergence for the  attractors was obtained assuming a rate of convergence for the resolvent operators. However the rates obtained in these works always show a loss with respect to rate of resolvent operators considered. In this paper we will show that for a class of parabolic problem that can be regarded as an ordinary differential equation we do not have loss in the process to pass the convergence of the resolvent operators to convergence of attractors. 

This paper is organized as follows. In the Section \ref{Functional Setting} we developed an abstract functional framework to treat the parabolic problem whose asymptotic behavior is described by a system of ordinary differential equation, we introduce the usual notation and define some notions of theory of attractor for semigroups. In the Section \ref{Compact Convergence} we introduce the important notion of compact convergence and prove the convergence of eigenvalues and eigenfunctions, we also obtain the gap condition and estimates a priori on linear semigroups that is essential to construct the invariant manifold. In the Sections \ref{Invariant Manifold}, \ref{Rate of Convergence} and \ref{Shadowing Theory and Rate of Convergence} we show that the rate of convergence of resolvent operators is the same rate of convergence of invariant manifolds and attractors, for that end, some aspects of Shadowing Theory will be presented. The Section \ref{Further Comments} is devoted to comments about more general situations that can be considered. Finally, in the Sections \ref{Applications to Spatial Homogenization} and \ref{A cell tissue reaction-diffusion problem} we consider the examples of spatial homogenization and large diffusion except in a neighborhood of a point where it becomes small.  

\section{Functional Setting}\label{Functional Setting}
In this section we introduce the general framework that we use to treat parabolic problems whose asymptotic behavior is dictated by a system of Morse-Smale ordinary differential equations. Some notion about the theory of attractors for semigroups is presented. We consider here only positive self-adjoint operator with compact resolvent.  More general situation will be addressed in the Section \ref{Further Comments}. 

Let $X_0$ be a finite dimensional Hilbert space with $\tn{dim}(X_0)=n$, for some positive integer $n$, and let $A_0:X_0\to X_0$ be an invertible  bounded linear operator whose spectrum set $\sigma(A_0)$ is given by
$$
\sigma(A_0)=\{\lambda_1^0<\lambda_2^0<\cdots<\lambda_n^0\},\quad 0<\lambda_1^0.
$$

Consider the following system of ordinary differential equation
\begin{equation}\label{unperturbed_problem}
\begin{cases}
\dot{u}^0+A_0u^0=f_0(u^0)\\
u^0(0)=u_0^0\in X_0,
\end{cases}
\end{equation}
where $f_0:X_0\to X_0$ is continuously differentiable.

Let $\{X_\varepsilon\}_{\varepsilon\in(0,\varepsilon_0]}$, $\varepsilon_0>0$, be a family of separable Hilbert spaces and let $\{A_\varepsilon\}_{\varepsilon\in(0,\varepsilon_0]}$ be a family of invertible linear operators such that, for each  $\varepsilon\in(0,\varepsilon_0]$, $A_\varepsilon:D(A_\varepsilon)\subset X_\varepsilon\to X_\varepsilon$ is self-adjoint, positive and has compact resolvent, hence its spectrum $\sigma(A_\varepsilon)$ is discrete, real and consists only eigenvalues with finite multiplicity. We denote
$$
\sigma(A_\varepsilon)=\{\lambda_1^\varepsilon<\lambda_2^\varepsilon<\cdots\},\quad 0<\lambda_1^\varepsilon,
$$
and we consider $\{\varphi_j^\varepsilon\}_{j=1}^\infty$ the associated orthonormal family of eigenfunction such that $Q_j^\varepsilon$  is the orthogonal projection onto $\tn{span}[\varphi_1^\varepsilon,\cdots,\varepsilon_j^\varepsilon]$. In this context  we can easily define the fractional powers of $A_\varepsilon$, that is, for $\alpha\in (0,1)$, $A^\alpha$ is defined by
$$
A^\alpha_\varepsilon u^\varepsilon=\sum_{j=1}^\infty (\lambda_j^\varepsilon)^\alpha Q_j^\varepsilon u^\varepsilon, \quad \forall\,u^\varepsilon\in D(A_\varepsilon^\alpha),
$$ 
where
$$
D(A_\varepsilon^\alpha)=\Big\{u\in X_\varepsilon:\sum_{j=1}^\infty (\lambda_j^\varepsilon)^{2\alpha}\| Q_j^\varepsilon u\|^2_{X_\varepsilon}<\infty \Big\}.
$$

It is well Known that $A_\varepsilon^\alpha$ is a positive self-adjoint operator and $D(A_\varepsilon^\alpha)$ becomes a Hilbert space, denoted by $X_\varepsilon^\alpha$, when equipped with the inner product $\pin{u, v}_{X_\varepsilon^\alpha}=\pin{A_\varepsilon^\alpha u,A_\varepsilon^\alpha v}_{X_\varepsilon}$, $u,v\in X_\varepsilon^\alpha$. The corresponding norm is 
$$
\|u\|_{X_\varepsilon^\alpha}=\|A_\varepsilon^\alpha u\|_{X_\varepsilon}=\Big(\sum_{j=1}^\infty(\lambda_j^\varepsilon)^{2\alpha}\| Q_j^\varepsilon u\|^2_{X_\varepsilon}  \Big)^\frac{1}{2}.
$$

Consider the following abstract parabolic problem,
\begin{equation}\label{perturbed_problem}
\begin{cases}
u^\varepsilon_t+A_\varepsilon u^\varepsilon=f_\varepsilon(u^\varepsilon)\\
u^\varepsilon(0)=u^\varepsilon_0\in X_\varepsilon^\alpha,
\end{cases}
\end{equation}
where $f_\varepsilon: X_\varepsilon^\alpha\to X_\varepsilon$ is continuously differentiable such that 
$$
\sup_{\varepsilon\in (0,\varepsilon_0]}\sup_{u\in X_\varepsilon^\alpha} \|f_\varepsilon(u)\|_{X_\varepsilon}\leq C,
$$
and 
$$
\|f_\varepsilon(u)-f_\varepsilon(v)\|_{X_\varepsilon}\leq C \|u-v\|_{X_\varepsilon^\alpha},\,\,\,\forall u,v\in X_\varepsilon^\alpha,
$$
where $C$ is a constant independent of $\varepsilon$.

Well-posedness for abstract parabolic problem type \eqref{perturbed_problem} is developed in \cite{A.N.Carvalho2010} and for ordinary differential equations type \eqref{unperturbed_problem} in \cite{Hale2009}. Basically, with some growth, signal and dissipativeness conditions, we can assume that the solution $u^0(\cdot)$ of \eqref{unperturbed_problem} and $u^\varepsilon(\cdot)$ of \eqref{perturbed_problem} are uniquely determined and defined for all $t\geq 0$.  The super index $\varepsilon$ makes reference to the elliptic operator $A_\varepsilon$ and in order to treat the problems \eqref{unperturbed_problem} and \eqref{perturbed_problem} in a coupled form we will consider $\varepsilon\in [0,\varepsilon_0]$ with the convention $X_0^\alpha=X_0$.

For each $\varepsilon\in [0,\varepsilon_0]$ we define $T_\varepsilon(t)u_0^\varepsilon=u^\varepsilon(t)$, $t> 0$ and $T_\varepsilon(0)=I_{X_\varepsilon^\alpha}$, where $I_{X_\varepsilon^\alpha}$ denotes the identity in $X_\varepsilon^\alpha$. We have $T_\varepsilon(\cdot):X_\varepsilon^\alpha\to X_\varepsilon^\alpha$ a family of nonlinear semigroups, that is, 
\begin{itemize}
\item[(i)] $T_\varepsilon(0)=I_{X_\varepsilon^\alpha}$; 
\item[(ii)] $T_\varepsilon(t+s)=T_\varepsilon(t)T_\varepsilon(s)$, $t,s\geq 0$;
\item[(iii)] $[0,\infty)\times X_\varepsilon^\alpha\ni (t,u)\to T_\varepsilon(t)u \in X_\varepsilon^\alpha$ is continuous.
\end{itemize} 
Moreover $T_\varepsilon(\cdot)$ satisfies the variation constants formula,
\begin{equation}\label{variation_constants_formula}
T_\varepsilon(t)u_0^\varepsilon=e^{-A_\varepsilon t}u_0^\varepsilon+\int_0^te^{-A_\varepsilon (t-s)}f_\varepsilon(T_\varepsilon(s)u_0^\varepsilon)\,ds,\,\,t>0,\,\,u_0^\varepsilon\in X_\varepsilon^\alpha,\,\,\, \varepsilon\in[0,\varepsilon_0],
\end{equation}
where $\{e^{-A_\varepsilon t}:t\geq 0\}$ is the linear semigroup generated by $-A_\varepsilon$. 

Since $A_0$ is a bounded operator in finite dimensional space, $e^{-A_0 t}$ is an uniformly continuous semigroup and  for $\varepsilon\in (0,\varepsilon_0]$, $e^{-A_\varepsilon t}$ is a strongly continuous semigroup given by 
$$
e^{-A_\varepsilon t}u_0^\varepsilon=\sum_{j=1}^\infty e^{-\lambda_j^\varepsilon t}Q_j^\varepsilon u_0^\varepsilon=\frac{1}{2\pi i}\int_{\Gamma} e^{\lambda t}(\lambda+A_\varepsilon)^{-1}u^\varepsilon_0\,d\lambda,\,\,t>0,
$$
where $\Gamma$ is the boundary of the set  $\{\lambda\in \C:|\tn{arg}(\lambda)|\leq \phi\}\setminus \{\lambda\in\C:|\lambda|\leq r \}$, for some $\phi \in (\pi/2,\pi)$ and $r>0$, oriented in such a way that the imaginary part is increasing.

We are interested in dynamical system that are dissipative, that is, the systems that have a compact set such that all solutions through each initial data are attracted by this set. To give meaning to the notion of attraction, we denote by $\tn{dist}_\varepsilon(A,B)$ the Hausdorff semidistance between $A,B\subset X_\varepsilon^\alpha$, $\varepsilon\in [0,\varepsilon_0]$, defined as   
$$
\tn{dist}_\varepsilon(A,B)=\sup_{a\in A}\inf_{b\in B}\|a-b\|_{X_\varepsilon^\alpha},
$$
and the symmetric Hausdorff metric
$$
\tn{d}_\varepsilon(A,B)=\max\{\tn{dist}_\varepsilon(A,B),\tn{dist}_\varepsilon(B,A)\}.
$$
\begin{definition} 
A set $\mathcal{A}_\varepsilon\subset X_\varepsilon^\alpha$ is said to be a global attractor for a semigroup $T_\varepsilon(\cdot)$ if 
\begin{itemize}
\item[(i)] $\mathcal{A}_\varepsilon$ is compact;
\item[(ii)] $\mathcal{A}_\varepsilon$ is invariant, that is, $T_\varepsilon(t)\mathcal{A}_\varepsilon=\mathcal{A}_\varepsilon$, for all $t\geq 0$;
\item[(iii)] $\mathcal{A}_\varepsilon$ attracts bounded subsets of $X_\varepsilon^\alpha$, that is, $\tn{dist}_\varepsilon(T_\varepsilon(t)B,\mathcal{A}_\varepsilon)\to 0$ as $t\to \infty$, for each $B\subset X_\varepsilon^\alpha$ bounded.
\end{itemize}
\end{definition}

We assume that the nonlinear semigroups $T_\varepsilon(\cdot)$, $\varepsilon\in [0,\varepsilon_0]$, satisfying \eqref{variation_constants_formula} have a global attractor $\mathcal{A}_\varepsilon$ uniformly bounded, that is,
$$
\sup_{u\in\mathcal{A}_\varepsilon}\|u\|_{X_\varepsilon^\alpha}\leq C,
$$ 
for some constant $C$ independent of $\varepsilon$.

A continuous function $\xi^\varepsilon(\cdot):\R\to X_\varepsilon^\alpha$, $\varepsilon\in [0,\varepsilon_0]$, is a global solution for $T_\varepsilon(\cdot)$ if it satisfies the equality $T_\varepsilon(t)\xi^\varepsilon(\tau)=\xi^\varepsilon(t+\tau)$, for all $\tau \in \R$ and $t\geq 0$. If $\xi^\varepsilon(0)=u_0^\varepsilon$ we say that $\xi^\varepsilon$ is a global solution through $u_0^\varepsilon$ and its orbit is given by $\{\xi^\varepsilon(t):t\in\R\}$.

Note that if a set $\mathcal{M}_\varepsilon\subset X_\varepsilon^\alpha$ is invariant $(T_\varepsilon(t)\mathcal{M}_\varepsilon=\mathcal{M}_\varepsilon$, $\forall\,t\geq 0)$ then any solution that starts in $\mathcal{M}_\varepsilon$ is essential for understanding the asymptotic dynamics. In fact, it is well known that any invariant set must be a union of the orbits of global solutions.

\begin{definition}
A set $\mathcal{M}_\varepsilon\subset X_\varepsilon^\alpha$ is an invariant manifold for \eqref{perturbed_problem} when for each $u_0^\varepsilon\in \mathcal{M}_\varepsilon$ there is a global solution $u^\varepsilon(\cdot)$ such that $u^\varepsilon(0)=u_0^\varepsilon$ and $u^\varepsilon(t)\in\mathcal{M}_\varepsilon$ for all $t\in\R$.
\end{definition}

Finally we also assume that the system \eqref{unperturbed_problem} generated a Morse-Smale semigroup in $X_0$. Thus, if we denote $\mathcal{E}_0$ the set of equilibrium points of $T_0(\cdot)$, then it is composed of $p$ hyperbolic points, that is,
$$
\mathcal{E}_0=\{u\in X_0:A_0 u-f_0(u)=0\}=\{u_1^{0,\ast},\dots,u_p^{0,\ast}\},
$$  
and $\sigma(A_0-f'_0(u_i^{0,\ast}))\cap\{u\in X_0:\|u\|_{X_0}=1\}=\emptyset$, for $i=1,\dots,p$. Moreover $T_0(\cdot)$ is dynamically gradient and
$$
\mathcal{A}_0=\bigcup_{i=1}^p W^u(u_i^{0,\ast}),
$$ 
where $W^u(u_i^{0,\ast})$ is the unstable manifold associated to the equilibrium point $u_i^{0,\ast}\in \mathcal{E}_0$ and for $i\neq j$ the unstable manifold $W^u(u_i^{0,\ast})$ and the local stable manifold $W_{\tn{loc}}^s(u_j^{0,\ast})$ has tranversal intersection, see \cite{Bortolan}.

\section{Compact Convergence}\label{Compact Convergence}

In this section we introduce the notion of compact convergence and we assert the necessary assumptions for estimate the convergence of attractors of \eqref{unperturbed_problem} and \eqref{perturbed_problem}. In fact, we assume that the resolvent operators and the nonlinearities have a rate of convergence and we used it to estimate the convergence of the first eigenvalues and associated eigenfunctions, consequently we also estimate the convergence of spectral projections. All  necessary conditions to construct the invariant manifold in the next section will be made here.

Assume that there are two families of bounded linear operators $E_\varepsilon:X_0\to X_\varepsilon^\alpha$ and $M_\varepsilon:X_\varepsilon\to X_0$, $\varepsilon\in (0,\varepsilon]$,  satisfying the following properties,
\begin{itemize}
\item[(i)] $M_\varepsilon\circ E_\varepsilon= I_{X_0}$;
\item[(ii)] $\|E_\varepsilon u^0\|_{X_\varepsilon}\to \|u^0\|_{X_0}$ as $\varepsilon\to 0$ ;
\item[(iii)] $\|E_\varepsilon\|_{\LL(X_0,X_\varepsilon)}$, $\|M_\varepsilon\|_{\LL(X_\varepsilon,X_0)}$,$\|E_\varepsilon\|_{\LL(X_0,X_\varepsilon^\alpha)}$, $\|M_\varepsilon\|_{\LL(X_\varepsilon^\alpha,X_0)}\leq C$, for some constant $C$ independent of $\varepsilon$.
\end{itemize}
Note that $E_\varepsilon$ is injective and $M_\varepsilon$ is surjective. Moreover the following inequalities are valid
$$
C^{-1}\|u^0\|_{X_0}\leq \|E_\varepsilon u^0\|_{X_\varepsilon^\alpha}\leq C\|u^0\|_{X_0},\quad u^0\in X_0. 
$$

\begin{definition}
We say that a family $\{u^\varepsilon\}_{\varepsilon\in (0,\varepsilon_0]}$, whith $u^\varepsilon\in X_\varepsilon^\alpha$, E-converge to $u^0\in X_0$ as $\varepsilon\to 0$ if $\|u^\varepsilon-E_\varepsilon u^0\|_{X_\varepsilon^\alpha}\to 0$ as $\varepsilon\to 0$. In that case we denote $u^\varepsilon\overset{\tn{E}}\longrightarrow u^0$.
\end{definition}

With this notion of convergence we can introduce the notion of compactness.

\begin{definition}
We say that a sequence $\{u^{\varepsilon_k}\}_{k\in\N}$, with $u^{\varepsilon_k}\in X_{\varepsilon_k}$, is relatively compact if for each subsequence $\{u^{\varepsilon_{k_l}}\}_{l\in\N}$ there is a subsequence $\{u^{\varepsilon_{k_{l_j}}}\}_{j\in\N}$ and an element $u^0\in X_0$ such that $u^{\varepsilon_{k_{l_j}}}\overset{\tn{E}}\longrightarrow u^0$. The family $\{u^\varepsilon\}_{\varepsilon\in (0,\varepsilon_0]}$, with $u^\varepsilon\in X_\varepsilon^\alpha$ is relatively compact if any subsequence $\{u^{\varepsilon_k}\}_{k\in\N}$ is relatively compact.
\end{definition}

In what follows $\{A_\varepsilon\}_{\varepsilon\in [0,\varepsilon_0]}$ is the family of operators defined in the previous section. 

\begin{definition}\label{compact_convergence}
We say that the family of bounded linear operators $\{A_\varepsilon^{-1}\}_{\varepsilon\in (0,\varepsilon_0]}$ converges compactly to $A_0^{-1}$ as $\varepsilon\to 0$, which we denote $A_\varepsilon^{-1}\overset{\tn{CC}}\longrightarrow A_0^{-1}$, if the following two conditions are satisfied:
\begin{itemize}
\item[(i)] $u^\varepsilon\overset{\tn{E}}\longrightarrow u^0\Rightarrow A_\varepsilon^{-1} u^\varepsilon\overset{\tn{E}}\longrightarrow A_0^{-1}u^0$.
\item[(ii)] each family of the form $\{A_\varepsilon^{-1}u^\varepsilon\}_{\varepsilon\in (0,\varepsilon_0]}$, with $\|u^\varepsilon\|_{X_\varepsilon^\alpha}=1$, for all $\varepsilon\in (0,\varepsilon_0]$, is relatively compact.
\end{itemize}
\end{definition}

\begin{lemma}
Assume that $A_\varepsilon^{-1}\overset{\tn{CC}}\longrightarrow A_0^{-1}$. The following assertions are true.
\begin{itemize}
\item[(i)] $\|A_\varepsilon^{-1}\|_{\LL(X_\varepsilon^\alpha)}\leq C$, for some constant $C$ independent of $\varepsilon$. 
\item[(ii)] Given any compact $K\subset \rho(-A_0)$ we have for $\varepsilon$ sufficiently small $($we still denote $\varepsilon\in (0,\varepsilon_0])$, $K\subset \rho(-A_\varepsilon)$ and 
\begin{equation}\label{unifom_bound_convergence_compact}
\sup_{\varepsilon\in(0,\varepsilon_0]}\sup_{\lambda\in K} \|(\lambda+A_\varepsilon)^{-1}\|_{\LL(X_\varepsilon,X_\varepsilon^\alpha)}\leq C,
\end{equation}  
for some constant $C$ independent of $\varepsilon$. Moreover $(\lambda+A_\varepsilon)^{-1}\overset{\tn{CC}}\longrightarrow (\lambda+A_0)^{-1}$, for all $\lambda\in K$. 
\end{itemize}
\end{lemma}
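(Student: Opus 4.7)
The plan is to deduce (i) directly from the relative-compactness clause in Definition~\ref{compact_convergence}, and to reduce (ii) to the spectral behaviour of $A_\varepsilon^{-1}$ through the factorisation $(\lambda+A_\varepsilon)^{-1}=(I+\lambda A_\varepsilon^{-1})^{-1}A_\varepsilon^{-1}$, valid whenever $I+\lambda A_\varepsilon^{-1}$ is invertible in $X_\varepsilon^\alpha$. For (i), I argue by contradiction: if $\|A_\varepsilon^{-1}\|_{\LL(X_\varepsilon^\alpha)}$ were not bounded uniformly in $\varepsilon$, I could pick $\varepsilon_k\to 0$ and unit vectors $u^{\varepsilon_k}\in X_{\varepsilon_k}^\alpha$ with $\|A_{\varepsilon_k}^{-1}u^{\varepsilon_k}\|_{X_{\varepsilon_k}^\alpha}\to\infty$. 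Clause (ii) of Definition~\ref{compact_convergence} yields a further subsequence with $A_{\varepsilon_k}^{-1}u^{\varepsilon_k}\overset{\tn{E}}\longrightarrow z^0\in X_0$, and the uniform bound $\|E_\varepsilon\|_{\LL(X_0,X_\varepsilon^\alpha)}\le C$ then forces $\|A_{\varepsilon_k}^{-1}u^{\varepsilon_k}\|_{X_{\varepsilon_k}^\alpha}\le C\|z^0\|_{X_0}+o(1)$, a contradiction.

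For the inclusion $K\subset\rho(-A_\varepsilon)$ and the estimate \eqref{unifom_bound_convergence_compact}, my plan is to prove in a single stroke that, for $\varepsilon$ small enough, $I+\lambda A_\varepsilon^{-1}$ is invertible in $X_\varepsilon^\alpha$ and $\|(I+\lambda A_\varepsilon^{-1})^{-1}\|_{\LL(X_\varepsilon^\alpha)}$ is bounded uniformly in $\varepsilon\in(0,\varepsilon_0]$ and $\lambda\in K$; combined with (i), the factorisation above then delivers \eqref{unifom_bound_convergence_compact}. If this uniform invertibility failed, there would exist $\varepsilon_k\to 0$, $\lambda_k\in K$, and $v^{\varepsilon_k}\in X_{\varepsilon_k}^\alpha$ with $\|v^{\varepsilon_k}\|_{X_{\varepsilon_k}^\alpha}=1$ such that $w^{\varepsilon_k}:=v^{\varepsilon_k}+\lambda_k A_{\varepsilon_k}^{-1}v^{\varepsilon_k}\to 0$ in $X_{\varepsilon_k}^\alpha$. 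Extract subsequences with $\lambda_k\to\lambda^\ast\in K$ (compactness of $K$) and $A_{\varepsilon_k}^{-1}v^{\varepsilon_k}\overset{\tn{E}}\longrightarrow z^0$ (Definition~\ref{compact_convergence}(ii)). The identity $v^{\varepsilon_k}=w^{\varepsilon_k}-\lambda_k A_{\varepsilon_k}^{-1}v^{\varepsilon_k}$ then gives $v^{\varepsilon_k}\overset{\tn{E}}\longrightarrow v^0:=-\lambda^\ast z^0$; the bound $\|E_{\varepsilon_k}v^0\|_{X_{\varepsilon_k}^\alpha}\le C\|v^0\|_{X_0}$ combined with $\|v^{\varepsilon_k}\|_{X_{\varepsilon_k}^\alpha}=1$ forces $\|v^0\|_{X_0}\ge C^{-1}>0$, so in particular $\lambda^\ast\neq 0$. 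Applying Definition~\ref{compact_convergence}(i) to $v^{\varepsilon_k}\overset{\tn{E}}\longrightarrow v^0$ gives $z^0=A_0^{-1}v^0$, and passing to the limit in the defining equation yields $(I+\lambda^\ast A_0^{-1})v^0=0$, i.e.\ $\lambda^\ast\in\sigma(-A_0)$, contradicting $K\subset\rho(-A_0)$.

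Finally, to establish $(\lambda+A_\varepsilon)^{-1}\overset{\tn{CC}}\longrightarrow(\lambda+A_0)^{-1}$ for each fixed $\lambda\in K$, the plan is once more to pass to the limit in the resolvent equation $v^\varepsilon+\lambda A_\varepsilon^{-1}v^\varepsilon=A_\varepsilon^{-1}u^\varepsilon$ with $v^\varepsilon:=(\lambda+A_\varepsilon)^{-1}u^\varepsilon$. If $u^\varepsilon\overset{\tn{E}}\longrightarrow u^0$, the just-obtained uniform bound makes $\{v^\varepsilon\}$ bounded; Definition~\ref{compact_convergence}(ii) extracts a subsequential E-limit $v^0$, Definition~\ref{compact_convergence}(i) lets me take limits on both sides, and uniqueness of the E-limit (automatic from $C^{-1}\|u^0\|_{X_0}\le\|E_\varepsilon u^0\|_{X_\varepsilon^\alpha}$) together with the identification $v^0=(\lambda+A_0)^{-1}u^0$ coming from the limit equation promotes subsequential convergence to full convergence. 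The relative-compactness half of CC for $(\lambda+A_\varepsilon)^{-1}$ follows by the same argument applied to arbitrary bounded families $\{u^\varepsilon\}$. I expect the principal obstacle to be the uniformity in $\lambda\in K$ in the spectral-exclusion step: a pointwise contradiction is routine, but extracting a single bound valid simultaneously for every $\lambda\in K$ requires combining compactness of $K$ with the normalisation trick above, and this is what keeps \eqref{unifom_bound_convergence_compact} clean enough to feed into the rate-of-convergence estimates of the later sections.
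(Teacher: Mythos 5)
Your proof is correct and follows essentially the same route as the paper: (i) is the same compactness-by-contradiction argument, and (ii) rests on the factorisation $(\lambda+A_\varepsilon)^{-1}=(I+\lambda A_\varepsilon^{-1})^{-1}A_\varepsilon^{-1}$, a contradiction argument extracting E-limits via Definition~\ref{compact_convergence}, and passing to the limit in the resolvent equation to identify the limit and establish CC-convergence. The only cosmetic difference is that you merge the spectral exclusion $K\subset\rho(-A_\varepsilon)$ and the uniform bound into a single contradiction argument (with the explicit observation that $\|v^0\|\ge C^{-1}$, hence $\lambda^\ast\neq 0$), whereas the paper runs them as two consecutive steps; the underlying idea is identical.
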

\begin{proof}
If there are sequences $\varepsilon_k\to 0$ and $u^{\varepsilon_k}\in X_{\varepsilon_k}^\alpha$ such that $\|u^{\varepsilon_k}\|_{X_{\varepsilon_k}^\alpha}=1$ and $\|A_{\varepsilon_k}^{-1} u^{\varepsilon_k}\|_{X_{\varepsilon_k}^\alpha}\to \infty$ as $\varepsilon\to 0$, then the condition (ii) in Definition \ref{compact_convergence} cannot be valid. Hence (i) is proved.

For the proof of (ii), if $K\cap \sigma(-A_\varepsilon)\neq \emptyset$, then we can take sequences $\lambda_k\in K\cap \sigma(-A_\varepsilon)$ such that $\lambda_k\to \lambda\in K$ and $u^{\varepsilon_k}\in X_{\varepsilon_k}$ such that $\|u^{\varepsilon_k}\|_{X_{\varepsilon_k}^\alpha}=1$ and $u^{\varepsilon_k}=\lambda_k A_{\varepsilon_k}^{-1}u^{\varepsilon_k}$. By condition (ii) in the Definition \ref{compact_convergence} we can assume $u^{\varepsilon_k}=\lambda_k A_{\varepsilon_k}^{-1}u^{\varepsilon_k}\overset{E}\longrightarrow \lambda u^0$, for some $u^0\in X_0$, and by condition (i) in Definition \ref{compact_convergence} we obtain $A_{\varepsilon_k}^{-1}u^{\varepsilon_k}\overset{E}\longrightarrow A_0^{-1}\lambda u^0$. It follows from uniqueness of the limit that $A_0 u^0=\lambda u^0$, that is, $\lambda\in \sigma(-A_0)\cap K$ which is an absurd. Hence $K\subset \rho(-A_\varepsilon)$.

Now, assume that \eqref{unifom_bound_convergence_compact} fails. We claim that for $\varepsilon$ sufficiently small, 
\begin{equation}\label{estimate_cc}
\sup_{\varepsilon\in(0,\varepsilon_0]}\sup_{\lambda\in K} \|(I+\lambda A_\varepsilon^{-1})^{-1}\|_{\LL(X_\varepsilon,X_\varepsilon^\alpha)}\leq C.
\end{equation}  
In fact, we have $(\lambda+A_\varepsilon)=A_\varepsilon(I+\lambda A_\varepsilon^{-1})$ which implies the existence  of $(I+\lambda A_\varepsilon^{-1})^{-1}$, and then $\tn{Kernel}(I+\lambda A_\varepsilon^{-1})=\{0\}$. Since $A_\varepsilon^{-1}$ is compact, it follows from Fredholm Alternative that  $\tn{Kernel}(I+\lambda A_\varepsilon^{-1})=\{0\}$ if only if, $\tn{R}(I+\lambda A_\varepsilon^{-1})=X_\varepsilon^{\alpha}$. Thus \eqref{estimate_cc} is equivalent to following assertion
\begin{equation}\label{estimate_cc1}
\|(I+\lambda A_\varepsilon^{-1})u^\varepsilon\|_{X_\varepsilon^\alpha}\geq \frac{1}{C},\quad u^\varepsilon\in X_\varepsilon^\alpha,\quad \|u^\varepsilon\|_{X_\varepsilon^\alpha}=1.
\end{equation}  
Assume that \eqref{estimate_cc1} is false. With the same argument above we can obtain  $u^0\in X_0$ such that $(I+\lambda A_0^{-1})u^0=0$ which is an absurd. Hence \eqref{estimate_cc} is valid and thus \eqref{unifom_bound_convergence_compact} follows noticing that
$$
\|(\lambda+A_\varepsilon)^{-1}\|_{\LL(X_\varepsilon,X_\varepsilon^\alpha)}\leq \|(I+\lambda A_\varepsilon^{-1})^{-1}\|_{\LL(X_\varepsilon,X_\varepsilon^\alpha)} \|A_\varepsilon^{-1}\|_{\LL(X_\varepsilon,X_\varepsilon^\alpha)},\quad \forall \lambda\in K,\quad \varepsilon\in (0,\varepsilon_0].
$$

Finally, we prove that $(\lambda+A_\varepsilon)^{-1}$ converges compactly to $(\lambda+A_0)^{-1}$, for all $\lambda\in K$. We denote $w^\varepsilon=(I+\lambda A_\varepsilon^{-1})^{-1}u^\varepsilon$, where $u^\varepsilon\in X_\varepsilon^\alpha$ and $\|u^\varepsilon\|_{X_\varepsilon^\alpha}=1$, thus $(\lambda+A_\varepsilon)^{-1}u^\varepsilon=A_\varepsilon^{-1}w^\varepsilon$ and $w^\varepsilon$ is uniformly bounded. Hence we can assume $(\lambda+A_\varepsilon)^{-1}u^\varepsilon\overset{E}\longrightarrow v^0\in X_0$. 
Thus, if  $u^\varepsilon\overset{E}\longrightarrow u^0$ then $A_\varepsilon^{-1}u^\varepsilon\overset{E}\longrightarrow A_0^{-1}u^0$ and 
$$
z^\varepsilon=(I+\lambda A_\varepsilon^{-1})^{-1}A_\varepsilon^{-1}u^\varepsilon=(\lambda+A_\varepsilon)^{-1}u^\varepsilon\overset{E}\longrightarrow
v^0$$
which implies
$$
A_\varepsilon^{-1}u^\varepsilon=(I+\lambda A_\varepsilon^{-1})z^\varepsilon\overset{E}\longrightarrow (I+\lambda A_0^{-1})v^0.
$$
Therefore $A_0^{-1}u^0=(I+\lambda A_0^{-1})v^0$, that is, $v^0=(\lambda+A_0)^{-1}$.
\cqd
\end{proof}

In what follows we assume $A_\varepsilon^{-1}\overset{\tn{CC}}\longrightarrow A_0^{-1}$. Moreover, we assume that there are two positive increasing function $\tau,\rho:[0,\varepsilon_0]\to [0,\infty)$ such that $\tau(0)=0=\rho(0)$,
\begin{equation}\label{rate_resolvent}
\|A_\varepsilon^{-1}-E_\varepsilon A_0^{-1}M_\varepsilon\|_{\LL(X_\varepsilon,X_\varepsilon^\alpha)}\leq \tau(\varepsilon),
\end{equation}
and
\begin{equation}\label{rate_nonlinearity}
\|f_\varepsilon(u^\varepsilon)-E_\varepsilon f_0(u^0)\|_{X_\varepsilon}\leq C \|u^\varepsilon-E_\varepsilon u^0\|_{X_\varepsilon^\alpha}+\rho(\varepsilon),\,\,\,u^\varepsilon\in X_\varepsilon^\alpha,\,\,u^0\in X_0,
\end{equation} 
where $C$ is a constant independent of $\varepsilon$.

\begin{lemma}
If $\lambda\in\rho(-A_\varepsilon)$ is such that $\lambda\notin (-\infty,-\lambda_1^0]$, then there is $\phi\in(\frac{\pi}{2},\pi)$ such that  $\lambda\in \Sigma_{-\lambda_1^0,\phi}\setminus B_r(-\lambda_1^0)=\{\mu\in \C:|\tn{arg}(\mu+\lambda_1^0)|\leq \phi\}\setminus \{\mu\in\C:|\mu+\lambda_1^0|\leq r\}$ for some small $r>0$, and it is valid the following estimate,
\begin{equation}\label{rate_resolvent_operator}
\|(\lambda+A_\varepsilon)^{-1}-E_\varepsilon(\lambda+A_0)^{-1}M_\varepsilon\|_{\LL(X_\varepsilon,X_\varepsilon^{\alpha})}\leq C\tau(\varepsilon),
\end{equation}
where $C$ is a positive constant independent of $\varepsilon$ and $\lambda$. Moreover, if a compact set $K\subset \rho(-A_\varepsilon)\cap \rho(-A_0)$ then \eqref{rate_resolvent_operator} is valid for some constant $C$ independent of $\varepsilon$ and $\lambda\in K$.
\end{lemma}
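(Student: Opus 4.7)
My plan is to split the statement into its geometric half (existence of $\phi$ and $r$) and its analytic half (the rate estimate), and to reduce the latter to the hypothesis \eqref{rate_resolvent} by a single algebraic identity. The geometric half is elementary: since $\sigma(-A_0)\subset(-\infty,-\lambda_1^0]$, any $\lambda\notin(-\infty,-\lambda_1^0]$ has $\lambda+\lambda_1^0\in\C\setminus(-\infty,0]$, so I can choose $\phi\in(\pi/2,\pi)$ and $r>0$ with $|\tn{arg}(\lambda+\lambda_1^0)|\leq\phi$ and $|\lambda+\lambda_1^0|\geq r$; when $\lambda$ ranges over a compact $K\subset\rho(-A_0)\cap\rho(-A_\varepsilon)$ (as in the second assertion), compactness yields a single pair $(\phi,r)$ valid throughout $K$.

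For the core estimate, given $u^\varepsilon\in X_\varepsilon$ I would set $v^\varepsilon=(\lambda+A_\varepsilon)^{-1}u^\varepsilon$ and $v^0=(\lambda+A_0)^{-1}M_\varepsilon u^\varepsilon$, and derive the identity
\[
(I+\lambda A_\varepsilon^{-1})(v^\varepsilon-E_\varepsilon v^0)=\bigl[A_\varepsilon^{-1}-E_\varepsilon A_0^{-1}M_\varepsilon\bigr]\bigl(u^\varepsilon-\lambda E_\varepsilon v^0\bigr).
\]
This is a rearrangement: applying $A_\varepsilon^{-1}$ to $(\lambda+A_\varepsilon)v^\varepsilon=u^\varepsilon$ gives $(I+\lambda A_\varepsilon^{-1})v^\varepsilon=A_\varepsilon^{-1}u^\varepsilon$, while applying $E_\varepsilon A_0^{-1}$ to $(\lambda+A_0)v^0=M_\varepsilon u^\varepsilon$, combined with $M_\varepsilon E_\varepsilon=I_{X_0}$, lets me write $(I+\lambda A_\varepsilon^{-1})E_\varepsilon v^0=E_\varepsilon A_0^{-1}M_\varepsilon u^\varepsilon+\lambda[A_\varepsilon^{-1}-E_\varepsilon A_0^{-1}M_\varepsilon]E_\varepsilon v^0$; subtracting and grouping produces the displayed relation. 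The virtue of the identity is that every $\varepsilon$-dependent discrepancy on the right is bundled into the bracket $A_\varepsilon^{-1}-E_\varepsilon A_0^{-1}M_\varepsilon$, which is exactly what hypothesis \eqref{rate_resolvent} controls.

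From there the estimate is a composition of previously established bounds. The bracket contributes a factor $\tau(\varepsilon)$ in the $X_\varepsilon\to X_\varepsilon^\alpha$ norm; the vector $u^\varepsilon-\lambda E_\varepsilon v^0$ is bounded in $X_\varepsilon$ by $C\|u^\varepsilon\|_{X_\varepsilon}$ using the uniform bounds in (iii), the boundedness of $(\lambda+A_0)^{-1}$ on the compact $K$, and the continuous embedding $X_\varepsilon^\alpha\hookrightarrow X_\varepsilon$; and I invert $(I+\lambda A_\varepsilon^{-1})$ on $X_\varepsilon^\alpha$ via the uniform bound $\|(I+\lambda A_\varepsilon^{-1})^{-1}\|_{\LL(X_\varepsilon^\alpha)}\leq C$, which follows from \eqref{estimate_cc1} (and from the fact that $(I+\lambda A_\varepsilon^{-1})^{-1}$ commutes with $A_\varepsilon^\alpha$). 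Combining these three estimates produces the desired bound $C\tau(\varepsilon)\|u^\varepsilon\|_{X_\varepsilon}$.

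The main obstacle I anticipate is one of bookkeeping rather than genuine difficulty: every constant in the final chain must be controlled uniformly in both $\varepsilon$ (for $\varepsilon$ small) and $\lambda\in K$. Uniformity in $\varepsilon$ is supplied by the previous lemma together with hypothesis \eqref{rate_resolvent}; uniformity in $\lambda$ is precisely what forces the restriction to a compact subset of the joint resolvent set, or equivalently, to a fixed sector $\Sigma_{-\lambda_1^0,\phi}\setminus B_r(-\lambda_1^0)$ as in the first assertion of the lemma.
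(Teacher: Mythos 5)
Your proposal is correct and follows essentially the same route as the paper: reduce $(\lambda+A_\varepsilon)^{-1}-E_\varepsilon(\lambda+A_0)^{-1}M_\varepsilon$ to the bracket $A_\varepsilon^{-1}-E_\varepsilon A_0^{-1}M_\varepsilon$ via a single algebraic identity, then invoke hypothesis \eqref{rate_resolvent} together with the uniform boundedness of $A_\varepsilon(\lambda+A_\varepsilon)^{-1}=(I+\lambda A_\varepsilon^{-1})^{-1}$ (from \eqref{unifom_bound_convergence_compact}/\eqref{estimate_cc1}) and of $(\lambda+A_0)^{-1}$ on the compact $K$. In fact, multiplying your pointwise relation through by $(I+\lambda A_\varepsilon^{-1})^{-1}$ yields exactly the operator identity
\[
(\lambda+A_\varepsilon)^{-1}-E_\varepsilon(\lambda+A_0)^{-1}M_\varepsilon
= A_\varepsilon(\lambda+A_\varepsilon)^{-1}\bigl(A_\varepsilon^{-1}-E_\varepsilon A_0^{-1}M_\varepsilon\bigr)\bigl(I-\lambda E_\varepsilon(\lambda+A_0)^{-1}M_\varepsilon\bigr),
\]
which is the (corrected) form of the identity the paper displays; your version is written out more completely, since the trailing factor in the paper's display does not compose as stated and is presumably a shorthand or typo for the bracket above.
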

\begin{proof}
We have the following identity
$$
A_\varepsilon^{\alpha}\Big( (\lambda+A_\varepsilon)^{-1} - E_\varepsilon(\lambda+A_0)^{-1}M_\varepsilon \Big) =  A_\varepsilon(\lambda+A_\varepsilon)^{-1} A_\varepsilon^{\alpha}(A_\varepsilon^{-1}-E_\varepsilon A_0^{-1}M_\varepsilon)(\lambda+A_0)^{-1}
$$
and the result follows from \eqref{rate_resolvent} by noting that $A_\varepsilon(\lambda+A_\varepsilon)^{-1}=I-\lambda(\lambda+A_\varepsilon)^{-1}$ is uniformly bounded.
\cqd
\end{proof}

Now we will see how the convergence of the resolvent operators \eqref{rate_resolvent_operator} implies the convergence of the eigenvalues, eigenfunctions and spectral projections.
 
\begin{proposition}\label{spectral_properties} The following assertions are valid.
\begin{itemize}
\item[(i)] Given $\delta>0$ sufficiently small, for $j=1,\cdots,n$, the operators
$$
Q_\varepsilon(\lambda_j^0)=\frac{1}{2\pi i}\int_{|\xi+\lambda_j^0|=\delta} (\lambda+A_\varepsilon)^{-1}\,d\lambda,\,\,\,\varepsilon\in[0,\varepsilon_0],
$$
are compact projections on $X_\varepsilon^\alpha$ and $Q_\varepsilon(\lambda_j^0)\overset{\tn{CC}}\longrightarrow Q_0(\lambda_j^0)$. Moreover it is valid the following estimate, 
\begin{equation}\label{estimate_projection_eigenfunction}
\|Q_\varepsilon(\lambda_j^0)E_\varepsilon-E_\varepsilon Q_0(\lambda_j^0)\|_{\LL(X_0,X_\varepsilon^\alpha)}\leq C \tau(\varepsilon),
\end{equation}
for each $j=1,\cdots,n$, where $C$ is a constant independent of $\varepsilon$. 
 \item[(ii)] All projected spaces $W_\varepsilon(\lambda_j^0)=Q_\varepsilon(\lambda_j^0)X_\varepsilon^\alpha$ have the same dimension, in fact, for $j=1,\dots,n$, we have
$$
\tn{rank}(Q_\varepsilon(\lambda_j^0))=\tn{dim}(W_\varepsilon(\lambda_j^0))=\tn{dim}(W_0(\lambda_j^0))=\tn{rank}(Q_0(\lambda_j^0)),\,\,\forall\,\varepsilon\in (0,\varepsilon_0].
$$
\item[(iii)] For each $u^0\in W_0(\lambda_j^0)$ there is a sequence $\{u^{\varepsilon_k}\}$ such that $u^{\varepsilon_k}\in W_\varepsilon(\lambda_j^0)$, $\varepsilon_k\to 0$  and $u^{\varepsilon_k}\overset{\tn{E}}\longrightarrow u^0$.
\item[(iv)] Given sequences $\varepsilon_k$ and $\{u^{\varepsilon_k}\}$ such that $u^{\varepsilon_k}\in W_\varepsilon(\lambda_j^0)$ and $\|u^{\varepsilon_k}\|_{X_{\varepsilon_k}^\alpha}=1$ for all $k\in\N$, each subsequence $\{u^{\varepsilon_{k_l}}\}$ of $\{u^{\varepsilon_k}\}$ has a subsequence that converges to some $u^0\in W_0(\lambda_j^0)$.
\end{itemize}
\end{proposition}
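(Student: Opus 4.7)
The plan is to integrate around a small circle enclosing the eigenvalue $-\lambda_j^0$ and transfer the rate estimate \eqref{rate_resolvent_operator} and the compact convergence of resolvents from the preceding lemma to the projection level. For item (i), I first choose $\delta > 0$ small enough that $\overline{B_\delta(-\lambda_j^0)}$ meets $\sigma(-A_0)$ only at $-\lambda_j^0$; then $\Gamma_j := \{\lambda \in \C : |\lambda + \lambda_j^0| = \delta\}$ is a compact subset of $\rho(-A_0)$, and by the previous lemma $\Gamma_j \subset \rho(-A_\varepsilon)$ for all sufficiently small $\varepsilon$, with resolvents uniformly bounded in $\LL(X_\varepsilon, X_\varepsilon^\alpha)$ there. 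Standard Dunford calculus then yields that $Q_\varepsilon(\lambda_j^0)$ is a projection on $X_\varepsilon^\alpha$; writing $(\lambda + A_\varepsilon)^{-1} = A_\varepsilon^{-1}(I + \lambda A_\varepsilon^{-1})^{-1}$ factors $Q_\varepsilon(\lambda_j^0)$ through the compact operator $A_\varepsilon^{-1}$, so the projection is compact and hence finite-rank.

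For the rate \eqref{estimate_projection_eigenfunction}, I exploit $M_\varepsilon \circ E_\varepsilon = I_{X_0}$ to rewrite
\begin{equation*}
Q_\varepsilon(\lambda_j^0) E_\varepsilon - E_\varepsilon Q_0(\lambda_j^0) = \frac{1}{2\pi i}\int_{\Gamma_j}\bigl[(\lambda + A_\varepsilon)^{-1} - E_\varepsilon (\lambda + A_0)^{-1} M_\varepsilon\bigr] E_\varepsilon \, d\lambda,
\end{equation*}
so that the uniform rate \eqref{rate_resolvent_operator} on the compact set $\Gamma_j$, the bound $\|E_\varepsilon\|_{\LL(X_0, X_\varepsilon)} \le C$, and the fixed length $2\pi\delta$ of $\Gamma_j$ combine to give the bound by $C\tau(\varepsilon)$. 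The compact convergence $Q_\varepsilon(\lambda_j^0) \overset{\tn{CC}}\longrightarrow Q_0(\lambda_j^0)$ I would derive from the same Dunford representation: condition (i) of Definition \ref{compact_convergence} follows by exchanging the limit with the contour integral using the pointwise compact convergence $(\lambda + A_\varepsilon)^{-1} \overset{\tn{CC}}\longrightarrow (\lambda + A_0)^{-1}$ on $\Gamma_j$ together with uniform boundedness; condition (ii) follows because $(\lambda + A_\varepsilon)^{-1} u^\varepsilon$ is relatively compact uniformly in $\lambda \in \Gamma_j$ whenever $\|u^\varepsilon\|_{X_\varepsilon^\alpha} = 1$.

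Once (i) is available, (iii) and (iv) are short. For (iii), I set $u^\varepsilon := Q_\varepsilon(\lambda_j^0) E_\varepsilon u^0 \in W_\varepsilon(\lambda_j^0)$; since $Q_0(\lambda_j^0) u^0 = u^0$, estimate \eqref{estimate_projection_eigenfunction} gives $\|u^\varepsilon - E_\varepsilon u^0\|_{X_\varepsilon^\alpha} \le C\tau(\varepsilon)\|u^0\|_{X_0} \to 0$. For (iv), I note that $u^{\varepsilon_k} = Q_{\varepsilon_k}(\lambda_j^0) u^{\varepsilon_k}$ with $\|u^{\varepsilon_k}\|_{X_{\varepsilon_k}^\alpha} = 1$; condition (ii) of the compact convergence of $Q_\varepsilon$ supplies a subsequence $u^{\varepsilon_{k_l}} \overset{\tn{E}}\longrightarrow u^0$, and a second application of condition (i) to the same sequence identifies the limit as $Q_0(\lambda_j^0) u^0$, so $u^0 \in W_0(\lambda_j^0)$ by uniqueness of E-limits.

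Part (ii) follows by combining these. Let $d := \tn{rank}(Q_0(\lambda_j^0))$. Applying (iii) to an orthonormal basis $\{v_i\}_{i=1}^{d}$ of $W_0(\lambda_j^0)$ and using the lower bound $\|E_\varepsilon v\|_{X_\varepsilon^\alpha} \ge C^{-1}\|v\|_{X_0}$ shows that the resulting E-approximants are linearly independent in $W_\varepsilon(\lambda_j^0)$ for small $\varepsilon$, hence $\dim W_\varepsilon(\lambda_j^0) \ge d$. For the reverse inequality, if some subsequence admits $d+1$ orthonormal vectors $u_1^{\varepsilon_k},\dots,u_{d+1}^{\varepsilon_k} \in W_{\varepsilon_k}(\lambda_j^0)$, then (iv) plus a diagonal extraction produces $u_i^{\varepsilon_k} \overset{\tn{E}}\longrightarrow u_i^0 \in W_0(\lambda_j^0)$, and a nontrivial relation $\sum c_i u_i^0 = 0$ exists by dimension count; the triangle inequality together with property (iii) of $E_\varepsilon$ forces $\|\sum c_i u_i^{\varepsilon_k}\|_{X_{\varepsilon_k}^\alpha} \to 0$, contradicting the orthonormality identity $\|\sum c_i u_i^{\varepsilon_k}\|_{X_{\varepsilon_k}^\alpha}^2 = \sum |c_i|^2 > 0$. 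I expect this orthonormality/linear-independence step to be the only delicate one; the remainder is a mechanical transfer of resolvent-level facts to the spectral-projection level via Cauchy integrals.
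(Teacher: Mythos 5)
Your proposal is correct and follows essentially the same route as the paper: both derive the projection-level rate \eqref{estimate_projection_eigenfunction} and the compact convergence $Q_\varepsilon(\lambda_j^0)\overset{\tn{CC}}\longrightarrow Q_0(\lambda_j^0)$ by writing the difference of projections as a Dunford contour integral and inserting the resolvent estimate \eqref{rate_resolvent_operator} together with $M_\varepsilon E_\varepsilon=I_{X_0}$, and then parts (iii) and (iv) follow by the same mechanical transfer. The one genuine (if small) divergence is in the upper bound for (ii): the paper invokes Lemma IV.2.3 of Kato \cite{Kato1980}, a Riesz-type near-orthogonality lemma valid in Banach spaces, to exhibit a unit vector in $W_{\varepsilon_k}(\lambda_j^0)$ at distance one from $E_{\varepsilon_k}W_0(\lambda_j^0)$ and then derive a contradiction from the E-convergence given by (i); you instead exploit the Hilbert structure of $X_\varepsilon^\alpha$, taking an orthonormal $(d+1)$-tuple in $W_{\varepsilon_k}(\lambda_j^0)$, passing to E-limits via (iv), finding a linear dependency by dimension count in $W_0(\lambda_j^0)$, and contradicting the Pythagorean identity $\|\sum c_i u_i^{\varepsilon_k}\|^2_{X_{\varepsilon_k}^\alpha}=\sum|c_i|^2$. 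Both arguments are sound; yours is more self-contained in the present Hilbert setting, whereas the paper's would survive verbatim in the Banach-space generalization of Section \ref{Further Comments}. Also worth noting: in (iii) you obtain the quantitative rate $\|u^\varepsilon-E_\varepsilon u^0\|_{X_\varepsilon^\alpha}\leq C\tau(\varepsilon)\|u^0\|_{X_0}$ directly from \eqref{estimate_projection_eigenfunction}, which is slightly stronger than the paper's qualitative convergence via CC, though the proposition only asserts the latter.
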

\begin{proof}
\begin{itemize}
\item[(i)] Since $A_\varepsilon^{-1}$ is compact we have $Q_\varepsilon(\lambda_j^0)$ compact projection with finite rank (see \cite{Kato1980}). By assumption $A_\varepsilon^{-1}\overset{CC}\longrightarrow A_0^{-1}$ which implies $Q_\varepsilon(\lambda_j^0)\overset{\tn{CC}}\longrightarrow Q_0(\lambda_j^0)$. Note that for appropriated $\delta>0$ we have
$$
Q_\varepsilon(\lambda_j^0)E_\varepsilon -E_\varepsilon Q_0(\lambda_j^0)=\dfrac{1}{2\pi i } \int_{|z+\lambda_j^0|=\delta} (z+A_\varepsilon)^{-1}E_\varepsilon-E_\varepsilon (z+A_0)^{-1}\,dz,
$$
and since $M_\varepsilon E_\varepsilon=I_{X_0}$, using \eqref{rate_resolvent_operator} we obtain \eqref{estimate_projection_eigenfunction}.
\item[(ii)] If $v_1,\dots,v_l$ is a basis for $W_0(\lambda_j^0)$, then for suitably small $\varepsilon$, the set $$\{Q_\varepsilon(\lambda_j^0)E_\varepsilon v_1,\dots,Q_\varepsilon(\lambda_j^0)E_\varepsilon v_l\}$$ is linearly independent in $W_\varepsilon(\lambda_j^0)$. Thus $\tn{dim}(W_\varepsilon(\lambda_j^0))\geq \tn{dim}(W_0(\lambda_j^0))$. To prove the equality, suppose that for some sequence $\varepsilon_k\to 0$, $\tn{dim}(W_\varepsilon(\lambda_j^0))> \tn{dim}(W_0(\lambda_j^0))$. From Lemma IV.2.3 in \cite{Kato1980}, for each $k\in\N$, there is a $u^{\varepsilon_{k}}\in W_{\varepsilon_k}(\lambda_j^0)$ with $\|u^{\varepsilon_k}\|_{X_{\varepsilon_k}^\alpha}=1$ such that $\tn{dist}_\varepsilon(u^{\varepsilon_k},E_{\varepsilon_k} W_{0}(\lambda_j^0))=1$, thus by item (i) above we may assume that $u^{\varepsilon_k}=Q_{\varepsilon_k}u^{\varepsilon_k}\overset{E}\longrightarrow Q_0(\lambda_j^0)=u^0$. Thus,
$$
1\leq \|u^{\varepsilon_k}-E_\varepsilon Q_0(\lambda_j^0)u^0\|_{X_{\varepsilon_k}^\alpha}=\|Q_\varepsilon(\lambda_j^0)u^{\varepsilon_k}-E_\varepsilon Q_0(\lambda_j^0)u^0\|_{X_{\varepsilon_k}^\alpha}
\overset{\varepsilon\to 0}\longrightarrow 0,$$
which is a contradiction.
\item[(iii)] If $\varepsilon_k\to 0$ then $E_{\varepsilon_k}u^0\overset{E}\longrightarrow u^0$ and then $u^{\varepsilon_k}=Q_{\varepsilon_k}(\lambda_j^0)E_\varepsilon u^0\overset{E}\longrightarrow Q_0(\lambda_j^0)u^0$. Since $Q_0(\lambda_j^0)$ is a projection and $u^0\in W_0(\lambda_j^0)$ we have $Q_0(\lambda_j^0)u^0=u^0$, which proves the result.
\item[(iv)] Since $Q_{\varepsilon_k}(\lambda_j^0)$ is a projection we have $Q_{\varepsilon_k}u^{\varepsilon_k}=u^{\varepsilon_k}$ and follows from item (i) above that each subsequence $\{u^{\varepsilon_{k_l}}\}$ has a subsequence $\{u^{\varepsilon_{k_{l_i}}}\}$ such that $u^{\varepsilon_{k_{l_i}}}\overset{E}\longrightarrow u^0\in X_0$. Therefore $u^{\varepsilon_{k_{l_i}}}=Q_{\varepsilon_{k_{l_i}}}(\lambda_j^0)u^{\varepsilon_{k_{l_i}}}\overset{E}\longrightarrow Q_0(\lambda_j^0)u^0=u^0$, which completes the proof.  
\end{itemize}
\cqd
\end{proof}

\begin{remark}
If we choose $\delta>0$ such that $\{\lambda\in\C:|\lambda+\lambda_j^0|=\delta\}\cap \sigma(A_\varepsilon)$ is equal the eigenvalues $\lambda_1^\varepsilon,\dots,\lambda_j^\varepsilon$, then $Q_\varepsilon(\lambda_j^0)=Q_\varepsilon^j$, where $Q_\varepsilon^j$ is the orthogonal projection onto $\tn{span}[\varphi_1^\varepsilon,\dots,\varphi_j^\varepsilon]$ 
considered in the previous section. 
\end{remark}

As a consequence of Proposition \ref{spectral_properties} we have the following result. 

\begin{corollary}\label{gap_condition}
It is valid the following convergences,
\begin{itemize}
\item[(i)] $\lambda_j^\varepsilon\to \infty$ as $\varepsilon\to 0$, $j\geq n+1$.
\item[(ii)] $|\lambda_j^\varepsilon-\lambda_j^0|\leq C\tau(\varepsilon)$, $j=1,\cdots,n$,
where $C$ is a constant independent of $\varepsilon$.
\end{itemize}
\end{corollary}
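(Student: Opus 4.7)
The plan is to prove the two items separately, first establishing (i) (which in turn pins down which eigenvalues of $A_\varepsilon$ are inside each disc around $\lambda_j^0$) and then extracting the quantitative estimate in (ii) from the resolvent rate \eqref{rate_resolvent_operator} and the projection estimate \eqref{estimate_projection_eigenfunction}. Throughout I fix $\delta>0$ small enough that the closed discs $\overline{D_j}=\{\mu\in\C:|\mu-\lambda_j^0|\leq\delta\}$, $j=1,\dots,n$, are pairwise disjoint. Since $\dim X_0=n$ equals the cardinality of $\sigma(A_0)$, every $\lambda_j^0$ is simple, and Proposition \ref{spectral_properties}(ii) gives $\mathrm{rank}(Q_\varepsilon(\lambda_j^0))=1$ for every small $\varepsilon$.

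For (i), I argue by contradiction: suppose there exist $L>0$, $\varepsilon_k\to 0$, and eigenvalues $\mu^{\varepsilon_k}\in\sigma(A_{\varepsilon_k})\cap[\lambda_1^0,L]$ with $\mu^{\varepsilon_k}\notin\bigcup_{j=1}^n D_j$. Choose unit eigenfunctions $\varphi^{\varepsilon_k}\in X_{\varepsilon_k}^\alpha$; by passing to a subsequence assume $\mu^{\varepsilon_k}\to\mu$ with $\mu\notin\sigma(A_0)$. From $A_{\varepsilon_k}^{-1}\varphi^{\varepsilon_k}=(\mu^{\varepsilon_k})^{-1}\varphi^{\varepsilon_k}$ and Definition \ref{compact_convergence}(ii) one extracts a further subsequence with $A_{\varepsilon_k}^{-1}\varphi^{\varepsilon_k}\overset{\tn{E}}\to w^0$, hence $\varphi^{\varepsilon_k}\overset{\tn{E}}\to\mu w^0=:\varphi^0$; the lower bound $\|E_\varepsilon u^0\|_{X_\varepsilon^\alpha}\ge C^{-1}\|u^0\|_{X_0}$ together with $\|\varphi^{\varepsilon_k}\|_{X_{\varepsilon_k}^\alpha}=1$ forces $\varphi^0\neq 0$. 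Applying Definition \ref{compact_convergence}(i) to the same sequence gives $A_{\varepsilon_k}^{-1}\varphi^{\varepsilon_k}\overset{\tn{E}}\to A_0^{-1}\varphi^0$, so $A_0\varphi^0=\mu\varphi^0$ and $\mu\in\sigma(A_0)$, a contradiction. Thus every eigenvalue of $A_\varepsilon$ in any bounded set of $(0,\infty)$ lies (for small $\varepsilon$) inside $\bigcup_j D_j$; since the total rank of $\sum_{j=1}^n Q_\varepsilon(\lambda_j^0)$ is $n$, exactly $n$ eigenvalues of $A_\varepsilon$ sit there, and ordering forces them to be $\lambda_1^\varepsilon,\dots,\lambda_n^\varepsilon$. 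The remaining $\lambda_j^\varepsilon$, $j\ge n+1$, must then exceed any prescribed $L$ for $\varepsilon$ small, giving $\lambda_j^\varepsilon\to\infty$.

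For (ii), the above analysis identifies $\lambda_j^\varepsilon$ as the unique eigenvalue of $A_\varepsilon$ inside $D_j$, so $A_\varepsilon^{-1}Q_\varepsilon(\lambda_j^0)=(\lambda_j^\varepsilon)^{-1}Q_\varepsilon(\lambda_j^0)$ on $X_\varepsilon^\alpha$. Let $\varphi_j^0$ be a unit eigenvector of $A_0$ with eigenvalue $\lambda_j^0$, so that $Q_0(\lambda_j^0)\varphi_j^0=\varphi_j^0$ and $E_\varepsilon A_0^{-1}\varphi_j^0=(\lambda_j^0)^{-1}E_\varepsilon\varphi_j^0$. Using the commutation $A_\varepsilon^{-1}Q_\varepsilon(\lambda_j^0)=Q_\varepsilon(\lambda_j^0)A_\varepsilon^{-1}$ and the identity
$$
A_\varepsilon^{-1}E_\varepsilon-E_\varepsilon A_0^{-1}=(A_\varepsilon^{-1}-E_\varepsilon A_0^{-1}M_\varepsilon)E_\varepsilon,
$$
whose operator norm in $\LL(X_0,X_\varepsilon^\alpha)$ is bounded by $C\tau(\varepsilon)$ by \eqref{rate_resolvent}, together with \eqref{estimate_projection_eigenfunction}, I decompose
$$
\Big(\frac{1}{\lambda_j^\varepsilon}-\frac{1}{\lambda_j^0}\Big)E_\varepsilon\varphi_j^0
= Q_\varepsilon(\lambda_j^0)\bigl(A_\varepsilon^{-1}E_\varepsilon-E_\varepsilon A_0^{-1}\bigr)\varphi_j^0
+ \bigl(Q_\varepsilon(\lambda_j^0)E_\varepsilon-E_\varepsilon Q_0(\lambda_j^0)\bigr)A_0^{-1}\varphi_j^0
+ \frac{1}{\lambda_j^\varepsilon}\bigl(E_\varepsilon\varphi_j^0-Q_\varepsilon(\lambda_j^0)E_\varepsilon\varphi_j^0\bigr),
$$
where the last term comes from rewriting $(\lambda_j^\varepsilon)^{-1}Q_\varepsilon(\lambda_j^0)E_\varepsilon\varphi_j^0$ in terms of $(\lambda_j^\varepsilon)^{-1}E_\varepsilon\varphi_j^0$. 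Each summand has $X_\varepsilon^\alpha$-norm at most $C\tau(\varepsilon)$ (the factor $(\lambda_j^\varepsilon)^{-1}$ is uniformly bounded since $\lambda_j^\varepsilon\in D_j$). Combining with $\|E_\varepsilon\varphi_j^0\|_{X_\varepsilon^\alpha}\ge C^{-1}$ yields $|(\lambda_j^\varepsilon)^{-1}-(\lambda_j^0)^{-1}|\le C\tau(\varepsilon)$, and multiplying by $\lambda_j^\varepsilon\lambda_j^0$ (bounded above) gives the desired $|\lambda_j^\varepsilon-\lambda_j^0|\le C\tau(\varepsilon)$. The main obstacle is (i): ensuring that no spurious bounded eigenvalues of $A_\varepsilon$ accumulate outside the discs $D_j$, which is precisely the place where compact convergence (rather than a mere norm estimate) is essential; once (i) is in hand, (ii) is essentially a routine algebraic manipulation.
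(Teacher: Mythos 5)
Your proof is correct and arrives at the same conclusion, but both items are handled by a somewhat different mechanism than the paper's. For item (i), the paper first establishes $\lambda_j^\varepsilon\to\lambda_j^0$ for $j\le n$ via a contour-integral argument: if no eigenvalue of $A_{\varepsilon_k}$ entered the disc around $\lambda_j^0$, then the integrals $\int_{|\lambda-\lambda_j^0|=\delta}(\lambda-\lambda_j^0)^k(\lambda-A_{\varepsilon_k})^{-1}\,d\lambda$ would vanish and, passing to the limit via compact convergence, $\lambda_j^0$ would be a removable singularity of $(\lambda-A_0)^{-1}$, a contradiction; it then rules out bounded $\lambda_j^{\varepsilon_k}$ for $j\ge n+1$ by the eigenfunction-extraction argument and ends with an implicit counting step (``which is an absurd''). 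You instead use eigenfunction extraction throughout and make the counting step explicit via $\operatorname{rank}(Q_\varepsilon(\lambda_j^0))=\operatorname{rank}(Q_0(\lambda_j^0))=1$ from Proposition \ref{spectral_properties}(ii): exactly $n$ eigenvalues (with multiplicity) lie in $\bigcup_j D_j$ and all others must exceed any prescribed $L$. This is the cleaner bookkeeping. One small slip: your contradiction should be run on an interval $[c,L]$ for a uniform lower bound $c>0$ rather than $[\lambda_1^0,L]$, since a priori $A_\varepsilon$ could have an eigenvalue in $(c,\lambda_1^0-\delta)$; the identical argument excludes it, and such a $c$ is supplied by the uniform bound $\|A_\varepsilon^{-1}\|_{\LL(X_\varepsilon^\alpha)}\le C$ from the Lemma following Definition \ref{compact_convergence}. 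For item (ii), the paper estimates $|\lambda_j^\varepsilon-\lambda_j^0|$ directly from the identity $A_\varepsilon^{-1}Q_\varepsilon(\lambda_j^0)E_\varepsilon u^0=(\lambda_j^\varepsilon)^{-1}Q_\varepsilon(\lambda_j^0)E_\varepsilon u^0$, normalizing $u^0$ so that $\|E_\varepsilon u^0\|_{X_\varepsilon^\alpha}=1$ and splitting the error into a term controlled by \eqref{rate_resolvent} and one controlled by \eqref{estimate_projection_eigenfunction}. You instead estimate $(1/\lambda_j^\varepsilon-1/\lambda_j^0)E_\varepsilon\varphi_j^0$ via a telescoping three-term decomposition, normalizing $\varphi_j^0$ in $X_0$ and using the lower bound $\|E_\varepsilon\varphi_j^0\|_{X_\varepsilon^\alpha}\ge C^{-1}$. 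The two decompositions are algebraically different but equivalent, and both rest on precisely the same two ingredients, \eqref{rate_resolvent} (giving $\|A_\varepsilon^{-1}E_\varepsilon-E_\varepsilon A_0^{-1}\|_{\LL(X_0,X_\varepsilon^\alpha)}\le C\tau(\varepsilon)$) and \eqref{estimate_projection_eigenfunction}, so neither buys anything substantive over the other; the paper's version avoids the final multiplication by $\lambda_j^\varepsilon\lambda_j^0$ only because it normalizes in the $X_\varepsilon^\alpha$ norm from the start.
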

\begin{proof}
We start asserting that $\lambda_j^\varepsilon\to \lambda_j^0$ as $\varepsilon\to 0$ for $j=1,\dots,n$. If this is not the case then there are $\varepsilon>0$ and sequence $\varepsilon_k\to 0$ such that 
$$
\int_{|\lambda-\lambda_j^0|=\delta}(\lambda-\lambda_j^0)^k(\lambda-A_{\varepsilon_k})^{-1}\,d\lambda=0,\quad \forall\, k\in\N.
$$
Since $A_\varepsilon^{-1}$ converges compactly to $A_0^{-1}$, we have
$$
\int_{|\lambda-\lambda_j^0|=\delta}(\lambda-\lambda_j^0)^k(\lambda-A_0)^{-1}\,d\lambda=0,\quad \forall\, k\in\N,
$$
which is an absurd because since $\lambda_j^0$ is an eigenvalue of $A_0$, $(\lambda-A_0)^{-1}$ doesn't have removable singularity at $\lambda=\lambda_j^0$.  

If (i) fails, we can take $R>0$ and sequences $\varepsilon_k\to \infty$ and $\{\lambda_j^{\varepsilon_k}\}$, $j\geq n+1$ such that $|\lambda_j^{\varepsilon_k}|\leq R$. We can assume $\lambda_j^{\varepsilon_k}\to \lambda$. Let $\varphi_j^{\varepsilon_k}$ be the corresponding eigenfunction to $\lambda_j^{\varepsilon_k}$ with $\|\varphi_j^{\varepsilon_k}\|_{X_{\varepsilon_k}^\alpha}=1$. Then $\varphi_j^{\varepsilon_k}=\lambda_j^{\varepsilon_k}A^{-1}_{\varepsilon_k}\varphi_j^{\varepsilon_k}$. Since $A_{\varepsilon_k}$ converges compactly to $A_0^{-1}$, we can assume $\varphi_j^{\varepsilon_k}\to u^0$ as $\varepsilon_k\to 0$ for some $u^0\in X_0$. Thus 
$$
\varphi_j^{\varepsilon_k}=\lambda_j^{\varepsilon_k}A^{-1}_{\varepsilon_k}\varphi_j^{\varepsilon_k}\to \lambda A^{-1}_0 u^0,
$$
as $\varepsilon_k\to 0$. Since $\varphi_j^{\varepsilon_k}\to u^0$, we get $u^0=\lambda A^{-1}_0 u^0$, which implies $\lambda \in \sigma(A_0)$, thus $\lambda=\lambda_j^0$, for some $j=1,\dots,n$, and $\lambda_j^{\varepsilon_k}\to \lambda_j^0$ as $\varepsilon_k\to 0$, $j\geq n+1$, which is an absurd.
 
Now we prove (ii). Since $\lambda_j^\varepsilon \overset{\varepsilon\to 0}\longrightarrow \lambda_j^0$, for each $j=1,\dots,n$,  then for $\varepsilon$ sufficiently small there is $u^0\in\tn{Kernel}(\lambda_j^0-A_0)$ with $\|E_\varepsilon u^0\|_{X_\varepsilon^\alpha}=1$ such that $Q_\varepsilon(\lambda_j^0)E_\varepsilon u^0$ is eigenvalue of $A_\varepsilon$ associated with $\lambda_j^\varepsilon$, thus
\begin{align*}
|\lambda_j^\varepsilon-\lambda_j^0|&\leq |\lambda_j^\varepsilon\lambda_j^0|\|E_\varepsilon A_0^{-1}Q_0(\lambda_j^0)u^0-A_\varepsilon^{-1} Q_\varepsilon(\lambda_j^0)E_\varepsilon u^0\|_{X_\varepsilon^\alpha} \\
& +|\lambda_j^0|\|(Q_\varepsilon(\lambda_j^0) E_\varepsilon-E_\varepsilon Q_0(\lambda_j^0)) u^0\|_{X_\varepsilon^\alpha},
\end{align*}
and
\begin{align*}
\|E_\varepsilon A_0^{-1}Q_0(\lambda_j^0)u^0-A_\varepsilon^{-1} Q_\varepsilon(\lambda_j^0)E_\varepsilon u^0\|_{X_\varepsilon^\alpha}&\leq \|(E_\varepsilon A_0^{-1}M_\varepsilon-A_\varepsilon^{-1})E_\varepsilon Q_0(\lambda_j^0)u^0\|_{X_\varepsilon^\alpha}
\\ 
&+\|A_\varepsilon^{-1}(E_\varepsilon Q_0(\lambda_j^0)-Q_\varepsilon(\lambda_j^0)E_\varepsilon )u^0\|_{X_\varepsilon^\alpha}.
\end{align*}
The result follows from \eqref{rate_resolvent} and \eqref{estimate_projection_eigenfunction}. 
\cqd
\end{proof}

Since we have the gap condition $\lambda_j^\varepsilon\to \infty$, as $\varepsilon\to 0$ for $j\geq n+1$ proved 
in Corollary \ref{gap_condition}, we can take $\delta>0$ and construct, for $\varepsilon$ sufficiently small (we still denote $\varepsilon\in (0,\varepsilon_0]$), the curve $\bar{\Gamma}=\Gamma_1+\Gamma_2+\Gamma_3+\Gamma_4\subset \rho(-A_\varepsilon)$, where  
$$
\Gamma_1=\{\mu\in\mathbb{C}\,;\,\tn{Re}(\mu)=-\lambda_1^0+\delta\tn{ and }|\tn{Im}(\mu)|\leq 1\},
$$
$$
\Gamma_2=\{\mu\in\mathbb{C}\,;\,-\lambda_n^0-\delta \leq\tn{Re}(\mu)\leq-\lambda_0^1+\delta\tn{ and }\tn{Im}(\mu)=1\},
$$
$$
\Gamma_3=\{\mu\in\mathbb{C}\,;\,\tn{Re}(\mu)=-\lambda_n^0-\delta\tn{ and }|\tn{Im}(\mu)|\leq 1\},
$$
$$
\Gamma_4=\{\mu\in\mathbb{C}\,;\,-\lambda_n^0-\delta \leq\tn{Re}(\mu)\leq-\lambda_1^0+\delta\tn{ and }\tn{Im}(\mu)=-1\}.
$$

For $\varepsilon\in [0,\varepsilon_0]$ we define the spectral projection
\begin{equation}\label{spectral_projection}
Q_\varepsilon=\frac{1}{2\pi i}\int_{\bar{\Gamma}} (\lambda+A_\varepsilon)^{-1}\,d\lambda.
\end{equation}
Note that $Q_0=I_{X_0}$ and $Q_\varepsilon$ coincides with the spectral projection $Q_n^\varepsilon$ previously considered and thus, the eigenspace $Q_\varepsilon X_\varepsilon^\alpha=\tn{span}[\varphi_1^\varepsilon,\cdots,\varphi_n^\varepsilon]$ is isomorphic to $\R^n$. Similarly the proof of Lemma \ref{spectral_properties} we obtain the following result.

\begin{lemma}\label{projectiom_estimate}
Let $Q_\varepsilon$ be the spectral projection defined in \eqref{spectral_projection}. There is a constant $C$ independent of $\varepsilon$ such that the following properties are valid,
\begin{itemize}
\item[(i)] $\|Q_\varepsilon\|_{\LL(X_\varepsilon,X_\varepsilon^\alpha)}\leq C;$
\item[(ii)] $\|Q_\varepsilon E_\varepsilon-E_\varepsilon\|_{\LL(X_0\,X_\varepsilon^\alpha)}\leq C \tau(\varepsilon).$
\end{itemize}
\end{lemma}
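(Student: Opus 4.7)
The plan is to derive both bounds directly from the definition of $Q_\varepsilon$ as a contour integral, combined with the already-established uniform resolvent bound \eqref{unifom_bound_convergence_compact} and the rate estimate \eqref{rate_resolvent_operator}. The curve $\bar\Gamma$ is a fixed rectangle in $\C$ of finite length independent of $\varepsilon$, and by Corollary \ref{gap_condition} (both the gap $\lambda_j^\varepsilon\to\infty$ for $j\geq n+1$ and the convergence $\lambda_j^\varepsilon\to\lambda_j^0$ for $j\leq n$) we have $\bar\Gamma\subset \rho(-A_\varepsilon)\cap \rho(-A_0)$ once $\delta$ is small and $\varepsilon_0$ is shrunk accordingly. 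Hence the previous lemma applies uniformly on $\bar\Gamma$.

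For (i), I would just move the norm inside the integral,
\begin{equation*}
\|Q_\varepsilon\|_{\LL(X_\varepsilon,X_\varepsilon^\alpha)}\le \frac{1}{2\pi}\int_{\bar\Gamma}\|(\lambda+A_\varepsilon)^{-1}\|_{\LL(X_\varepsilon,X_\varepsilon^\alpha)}\,|d\lambda|,
\end{equation*}
and invoke \eqref{unifom_bound_convergence_compact} (applied to the compact set $K=\bar\Gamma\subset \rho(-A_0)$) together with the fact that the length of $\bar\Gamma$ is a fixed constant depending only on $\lambda_1^0,\lambda_n^0,\delta$.

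For (ii), the key observation is that $\bar\Gamma$ encloses the entire spectrum $\{-\lambda_1^0,\dots,-\lambda_n^0\}$ of $-A_0$, so $Q_0=I_{X_0}$. Therefore
\begin{equation*}
Q_\varepsilon E_\varepsilon-E_\varepsilon = Q_\varepsilon E_\varepsilon-E_\varepsilon Q_0 = \frac{1}{2\pi i}\int_{\bar\Gamma}\Bigl[(\lambda+A_\varepsilon)^{-1}E_\varepsilon-E_\varepsilon(\lambda+A_0)^{-1}\Bigr]\,d\lambda.
\end{equation*}
Using $M_\varepsilon E_\varepsilon = I_{X_0}$, the integrand factors as
\begin{equation*}
(\lambda+A_\varepsilon)^{-1}E_\varepsilon - E_\varepsilon(\lambda+A_0)^{-1}M_\varepsilon E_\varepsilon = \bigl[(\lambda+A_\varepsilon)^{-1} - E_\varepsilon(\lambda+A_0)^{-1}M_\varepsilon\bigr]E_\varepsilon,
\end{equation*}
and the estimate \eqref{rate_resolvent_operator}, valid with a constant uniform in $\lambda\in \bar\Gamma$ (this is the ``moreover'' clause of the previous lemma), gives
\begin{equation*}
\bigl\|(\lambda+A_\varepsilon)^{-1}E_\varepsilon-E_\varepsilon(\lambda+A_0)^{-1}\bigr\|_{\LL(X_0,X_\varepsilon^\alpha)} \le C\tau(\varepsilon)\,\|E_\varepsilon\|_{\LL(X_0,X_\varepsilon)} \le C\tau(\varepsilon).
\end{equation*}
Integrating over the finite-length contour $\bar\Gamma$ yields (ii).

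There is no real obstacle here; the only point that needs care is checking that both the uniform resolvent bound and the uniform rate bound hold on all of $\bar\Gamma$ simultaneously for $\varepsilon$ small. This is exactly the content of the preceding lemma applied to $K=\bar\Gamma$, once we have used Corollary \ref{gap_condition} to guarantee $\bar\Gamma\subset \rho(-A_\varepsilon)$ for small $\varepsilon$. The proof is essentially a translation of the argument given for \eqref{estimate_projection_eigenfunction} in Proposition \ref{spectral_properties}(i) from a small circle around a single eigenvalue to the larger rectangular contour $\bar\Gamma$.
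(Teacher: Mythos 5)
Your proof is correct and takes essentially the same approach the paper intends: the paper simply refers to the argument of Proposition \ref{spectral_properties}(i), and you carry out exactly that argument over the rectangular contour $\bar\Gamma$ instead of a small circle, using $Q_0=I_{X_0}$, the factorization via $M_\varepsilon E_\varepsilon=I_{X_0}$, and the uniform resolvent bounds \eqref{unifom_bound_convergence_compact} and \eqref{rate_resolvent_operator} on the fixed compact set $K=\bar\Gamma$.
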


The next result play an important role in the existence of invariant manifold proved in the next section. 

\begin{lemma}\label{Linear_estimate}
Let $Q_\varepsilon$ be the spectral projection defined in \eqref{spectral_projection}. If we denote, for each $\varepsilon\in (0,\varepsilon_0],$ $Y_\varepsilon=Q_\varepsilon X_\varepsilon^\alpha$ and $Z_\varepsilon=(I-Q_\varepsilon)X_\varepsilon^\alpha$ and define the projected operators
$$
A_\varepsilon^+=A_\varepsilon|_{Y_\varepsilon}\quad\tn{and}\quad A_\varepsilon^-=A_\varepsilon|_{Z_\varepsilon},
$$
then $-A_\varepsilon^-$ generates a semigroup in $Z_\varepsilon$, $-A_\varepsilon^+$ generates a group in $Y_\varepsilon$ and the following estimates are valid,
\begin{itemize}
\item[(i)] $\|e^{-A^-_\varepsilon  t}z\|_{X_\varepsilon^\alpha}\leq Me^{-\beta t}\|z\|_{X_\varepsilon^\alpha}$, $z\in Z_\varepsilon,$ $t>0;$
\item[(ii)] $\|e^{-A^-_\varepsilon  t}z\|_{X_\varepsilon^\alpha}\leq Me^{-\beta t}t^{-\alpha}\|z\|_{X_\varepsilon}$, $z\in Z_\varepsilon,$ $t>0;$
\item[(iii)]$\|e^{-A^+_\varepsilon t}z\|_{X_\varepsilon^\alpha}\leq M e^{-\bar{\gamma} t}\|z\|_{X_\varepsilon^\alpha}$, $z\in Y_\varepsilon$ , $t>0$;
\item[(iv)]$\|e^{-A_\varepsilon^+ t}z\|_{X_\varepsilon^\alpha}\leq M e^{-\gamma t}\|z\|_{X_\varepsilon^\alpha},$ $z\in Y_\varepsilon,$ $t< 0;$
\item[(v)]$\|E_\varepsilon e^{-A_0 t}M_\varepsilon z\|_{X_\varepsilon^\alpha}\leq M e^{-\gamma t}\|z\|_{X_\varepsilon^\alpha},$ $z\in X_\varepsilon^\alpha,$ $t< 0;$
\item[(vi)]$\|e^{-A_\varepsilon^+ t}z-E_\varepsilon e^{-A_0 t}M_\varepsilon z\|_{X^\alpha_\varepsilon}\leq Me^{-\gamma t}\tau(\varepsilon) \|z\|_{X_\varepsilon^\alpha},$ $z\in Y_\varepsilon$, $t< 0;$
\item[(vii)] $\|e^{-A_\varepsilon^+ t}z-E_\varepsilon e^{-A_0 t}M_\varepsilon z\|_{X^\alpha_\varepsilon}\leq M\tau(\varepsilon)\|z\|_{X_\varepsilon^\alpha}$, $z\in Y_\varepsilon$, $t>0$,
\end{itemize}
where $\beta=\lambda_{n+1}^\varepsilon\to \infty$ as $\varepsilon\to 0$, $\bar{\gamma}=\lambda_1^\varepsilon$, $\gamma=\lambda^0_n+\delta$ and $M$ is a constant independent of $\varepsilon$. 
\end{lemma}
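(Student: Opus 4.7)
The strategy is to combine Dunford functional calculus with the uniform resolvent rate \eqref{rate_resolvent_operator} applied on the compact contour $\bar\Gamma$. By Corollary \ref{gap_condition} we have $\lambda_{n+1}^\varepsilon\to\infty$ while $\lambda_j^\varepsilon\to\lambda_j^0$ for $j\leq n$, so for $\varepsilon$ small $\bar\Gamma$ lies in $\rho(-A_\varepsilon)\cap\rho(-A_0)$ and separates the spectrum of $-A_\varepsilon$: it encloses $\{-\lambda_j^\varepsilon\}_{j=1}^{n}$ and all of $\sigma(-A_0)$, with $\{-\lambda_j^\varepsilon\}_{j\geq n+1}$ strictly to its left. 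Since $Y_\varepsilon$ is $n$-dimensional, $-A_\varepsilon^+$ generates a uniformly continuous group on $Y_\varepsilon$; and $-A_\varepsilon^-$ inherits from $-A_\varepsilon$ an analytic semigroup on $Z_\varepsilon$ with sectorial bounds uniform in $\varepsilon$ by \eqref{unifom_bound_convergence_compact}.

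Estimates (i)--(v) follow from direct spectral representation. For (i)--(ii), write $e^{-A_\varepsilon^-t}z=\sum_{j\geq n+1}e^{-\lambda_j^\varepsilon t}Q_j^\varepsilon z$ and apply the standard analytic semigroup bounds $\|e^{-A_\varepsilon^-t}z\|_{X_\varepsilon^\alpha}\leq Me^{-\beta t}\|z\|_{X_\varepsilon^\alpha}$ and $\|e^{-A_\varepsilon^-t}z\|_{X_\varepsilon^\alpha}\leq Me^{-\beta t}t^{-\alpha}\|z\|_{X_\varepsilon}$ with $\beta=\lambda_{n+1}^\varepsilon$, the uniform $M$ following from uniform sectoriality. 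For (iii)--(iv), use $e^{-A_\varepsilon^+t}z=\sum_{j=1}^n e^{-\lambda_j^\varepsilon t}Q_j^\varepsilon z$: for $t>0$ the controlling rate is $\lambda_1^\varepsilon=\bar\gamma$; for $t<0$ it is $\lambda_n^\varepsilon$, and $\lambda_n^\varepsilon\leq\lambda_n^0+C\tau(\varepsilon)<\lambda_n^0+\delta=\gamma$ for small $\varepsilon$. Item (v) is the finite-dimensional spectral bound for $A_0$ transferred by the uniform bounds $\|E_\varepsilon\|,\|M_\varepsilon\|\leq C$ from the functional setting.

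The core is (vi)--(vii). Both operators admit a contour representation on $\bar\Gamma$: for $z\in Y_\varepsilon$,
\[ e^{-A_\varepsilon^+t}z=\frac{1}{2\pi i}\int_{\bar\Gamma}e^{\lambda t}(\lambda+A_\varepsilon)^{-1}z\,d\lambda,\qquad E_\varepsilon e^{-A_0t}M_\varepsilon z=\frac{1}{2\pi i}\int_{\bar\Gamma}e^{\lambda t}E_\varepsilon(\lambda+A_0)^{-1}M_\varepsilon z\,d\lambda, \]
the second identity obtained by pulling the bounded operators $E_\varepsilon$ and $M_\varepsilon$ through the Bochner integral. Subtracting and invoking the compact-set version of \eqref{rate_resolvent_operator}, uniform in $\lambda\in\bar\Gamma$, together with the finite-dimensional equivalence $\|z\|_{X_\varepsilon}\leq C\|z\|_{X_\varepsilon^\alpha}$ on $Y_\varepsilon$ (uniform because $\lambda_1^\varepsilon$ is bounded away from $0$), we obtain
\[ \|e^{-A_\varepsilon^+t}z-E_\varepsilon e^{-A_0t}M_\varepsilon z\|_{X_\varepsilon^\alpha}\leq C\tau(\varepsilon)\Big(\int_{\bar\Gamma}|e^{\lambda t}|\,|d\lambda|\Big)\|z\|_{X_\varepsilon^\alpha}. \]
On $\bar\Gamma$ the real part of $\lambda$ lies in $[-\lambda_n^0-\delta,-\lambda_1^0+\delta]$; for $t<0$ this forces $|e^{\lambda t}|\leq e^{-\gamma t}$, yielding (vi), while for $t>0$, taking $\delta<\lambda_1^0$ gives $|e^{\lambda t}|\leq 1$, yielding (vii).

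The principal technical hurdle is uniformity of every constant: one must fix $\bar\Gamma$ (hence $\delta$) once and for all so that it sits in a compact subset of $\rho(-A_0)$, and then argue that for small $\varepsilon$ the resolvent rate \eqref{rate_resolvent_operator} and the sectorial bound \eqref{unifom_bound_convergence_compact} both hold with constants independent of $\varepsilon$ and of $\lambda\in\bar\Gamma$. Granted this, the remaining work is routine contour-integral estimation.
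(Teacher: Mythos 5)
Your proposal follows exactly the same route as the paper's proof: direct spectral series for (i)--(v) using $\lambda_{n+1}^\varepsilon$, $\lambda_1^\varepsilon$, $\lambda_n^\varepsilon$ as the controlling rates, and the Dunford contour integral over $\bar\Gamma$ combined with the uniform rate \eqref{rate_resolvent_operator} and the uniform embedding $X_\varepsilon^\alpha\hookrightarrow X_\varepsilon$ for (vi)--(vii). Your remark that one should take $\delta<\lambda_1^0$ for (vii) makes precise what the paper dispatches with "$\bar\Gamma$ is compact and $e^{\mu t}$ is continuous," but otherwise the argument is identical.
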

\begin{proof}
\begin{itemize}
\item[(i)] We have for $z\in Z_\varepsilon$,
$$
e^{-A_\varepsilon^- t}z=\sum_{i=n+1}^\infty e^{-\lambda_i^\varepsilon t}\pin{z,\varphi_i^\varepsilon}_{X_\varepsilon}\varphi_i^\varepsilon,\quad t>0,
$$
but if $i\geq n+1$, then  $\lambda_{n+1}^\varepsilon\leq\lambda_i^\varepsilon$ which implies $e^{-\lambda_i^\varepsilon t}\leq e^{\lambda_{n+1}^\varepsilon t}$ for $t>0$. Thus
$$
\|e^{-A^-_\varepsilon  t}z\|_{X_\varepsilon^\alpha}\leq \Big(e^{-2\lambda_{n+1}^\varepsilon t}\sum_{i=n+1}^\infty \pin{z,\varphi_i^\varepsilon}^2_{X_\varepsilon}(\lambda_i^\varepsilon)^{2\alpha}\Big)^\frac{1}{2}\leq M e^{-\lambda_{n+1}^\varepsilon t}\|z\|_{X_\varepsilon^\alpha},\quad t>0.
$$
\item[(ii)]The function $f(\mu)=e^{-\mu t}\mu^\alpha$ attains its maximum at $\mu=\alpha/t$, $t>0$. Then,
$$
\|e^{-A^-_\varepsilon  t}z\|_{X_\varepsilon^\alpha}\leq \begin{cases} e^{-\lambda_{n+1}^\varepsilon t}(\lambda_{n+1}^\varepsilon)^\alpha\|z\|_{X_\varepsilon}, \,\,\,\alpha/t<\lambda_{n+1}^\varepsilon, \\ e^{-\lambda_{n+1}^\varepsilon t}(\alpha/t)^\alpha \|z\|_{X_\varepsilon},\,\,\,\alpha/t>\lambda_{n+1}^\varepsilon.\end{cases}
$$
\item[(iii)] We have for $z\in Y_\varepsilon$,
$$
e^{-A_\varepsilon^+ t}z=\sum_{i=1}^n e^{-\lambda_i^\varepsilon t}\pin{z,\varphi_i^\varepsilon}_{X_\varepsilon}\varphi_i^\varepsilon,
$$
but if $i\leq n+1$, then $\lambda_{1}^\varepsilon\leq\lambda_i^\varepsilon$ which implies $e^{-\lambda_i^\varepsilon t}\leq e^{-\lambda_{1}^\varepsilon t}$ for $t>0$. Thus
$\|e^{-A^+_\varepsilon  t}z\|_{X_\varepsilon^\alpha}\leq Me^{-\lambda_{1}^\varepsilon t}\|z\|_{X_\varepsilon^\alpha}$, $t>0.$
\item[(iv)] Similarly to item (iii) we obtain $\|e^{-A^+_\varepsilon  t}z\|_{X_\varepsilon^\alpha}\leq Me^{-\lambda_{n}^\varepsilon t}\|z\|_{X_\varepsilon^\alpha}$, $t<0.$ But $\lambda_n^\varepsilon\to \lambda_n^0<\lambda_n^0+\delta$, thus we can take $\varepsilon$ sufficiently small such that $e^{-\lambda_n^\varepsilon}\leq e^{-(\lambda_n^0+\delta)t},$ $t<0$.
\item[(v)] Similarly to item (iii) we also obtain $\|E_\varepsilon e^{-A_0 t}M_\varepsilon z\|_{X_\varepsilon^\alpha}\leq Me^{-(\lambda_{n}^0+\delta)t}\|z\|_{X_\varepsilon^\alpha}$, $t<0,$ where we have used that $E_\varepsilon$  and $M_\varepsilon$ are uniformly bounded.
\item[(vi)] Since $A_\varepsilon^{+}$ is a bounded operator in the finite dimensional space, for $z\in Y_\varepsilon$ and $t<0$, we  have
\begin{align*}
\|e^{-A_\varepsilon^+ t}z-E_\varepsilon e^{-A_0 t}M_\varepsilon z\|_{X_\varepsilon^\alpha} & \leq\frac{1}{2\pi}\int_{\bar{\Gamma}} \|e^{\mu t}((\mu+A_\varepsilon)^{-1}z-E_\varepsilon(\mu+A_0)^{-1}M_\varepsilon z)\|_{X_\varepsilon^\alpha}\,d\mu\\
& \leq C\tau(\varepsilon) \int_{\bar{\Gamma}} |e^{\mu t}|\,|d\mu| \|z\|_{X_\varepsilon^\alpha}\leq C\tau(\varepsilon) \sup_{\mu\in\bar{\Gamma}} e^{\tn{Re}(\mu) t} \|z\|_{X_\varepsilon^\alpha}\\
& \leq M e^{-(\lambda_n^0+\delta) t}\tau(\varepsilon) \|z\|_{X_\varepsilon^\alpha},
\end{align*}
where we have used $X_\varepsilon^\alpha\subset X_\varepsilon$ uniformly.
\item[(vii)] Analogous to item (vi) using that $\bar{\Gamma}$ is compact and $e^{\mu t}$ is continuous.
\end{itemize}
\cqd
\end{proof}

\section{Invariant Manifold}\label{Invariant Manifold}

In this section we use the gap condition in the Corollary \ref{gap_condition} and the estimates in the Lemma \ref{Linear_estimate} to construct an invariant manifolds for the problems \eqref{perturbed_problem}. We  prove that these manifolds converge to $X_0$ as $\varepsilon\to 0$ and this convergence can be estimate by $\tau(\varepsilon)+\rho(\varepsilon)$. 

\begin{theorem}\label{estimate_invariant_manifold}
For $\varepsilon$ sufficiently small there is an invariant manifold $\mathcal{M}_\varepsilon$ for \eqref{perturbed_problem}, which is given by graph of a certain Lipschitz continuous map $s_\ast^\varepsilon:Y_\varepsilon\to Z_\varepsilon$ as 
$$
\mathcal{M}_\varepsilon=\{u^\varepsilon\in X_\varepsilon^\alpha\,;\, u^\varepsilon = Q_\varepsilon u^\varepsilon+s_{*}^\varepsilon(Q_\varepsilon u^\varepsilon)\}
.$$ 
The map $s_\ast^\varepsilon:Y_\varepsilon\to Z_\varepsilon$ satisfies the condition
\begin{equation}\label{estimate_invariant_manifold_reaction_diffusion}
|\!|\!|s_\ast^\varepsilon |\!|\!|=\sup_{v^\varepsilon\in Y_\varepsilon}\|s_\ast^\varepsilon(v^\varepsilon)\|_{X_\varepsilon^\alpha}\leq C (\tau(\varepsilon)+\rho(\varepsilon)),
\end{equation}
for some positive constant $C$ independent of $\varepsilon$.
The invariant manifold $\mathcal{M}_\varepsilon$ is exponentially attracting and the global attractor $\mathcal{A}_\varepsilon$ of the problem \eqref{perturbed_problem} lying in $\mathcal{M}_\varepsilon$ and the flow on $\mathcal{A}_\varepsilon$ is given by
$$
u^\varepsilon(t)=v^\varepsilon(t)+s_\ast^\varepsilon(v^\varepsilon(t)), \quad t\in\R,
$$ 
where $v^\varepsilon(t)$ satisfy
$$
\dot{v^\varepsilon}+A_\varepsilon^+v^\varepsilon=Q_\varepsilon f_\varepsilon(v^\varepsilon+s_\ast^\varepsilon(v^\varepsilon(t))).
$$
\end{theorem}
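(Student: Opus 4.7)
The plan is to realize $\mathcal{M}_\varepsilon$ as the graph of $s_\ast^\varepsilon:Y_\varepsilon\to Z_\varepsilon$ through the classical Lyapunov--Perron fixed point, and then extract the rate $\tau(\varepsilon)+\rho(\varepsilon)$ by exploiting the structural estimates \eqref{rate_resolvent}, \eqref{rate_nonlinearity} together with Lemma \ref{projectiom_estimate}(ii). First I would project \eqref{perturbed_problem} using $Q_\varepsilon$ and $I-Q_\varepsilon$: writing $u^\varepsilon=v^\varepsilon+z^\varepsilon$, the system becomes $\dot v^\varepsilon+A_\varepsilon^+v^\varepsilon=Q_\varepsilon f_\varepsilon(v^\varepsilon+z^\varepsilon)$ and $\dot z^\varepsilon+A_\varepsilon^-z^\varepsilon=(I-Q_\varepsilon)f_\varepsilon(v^\varepsilon+z^\varepsilon)$. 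A globally bounded solution is forced to satisfy the integral identity $z^\varepsilon(t)=\int_{-\infty}^{t}e^{-A_\varepsilon^-(t-\sigma)}(I-Q_\varepsilon)f_\varepsilon(u^\varepsilon(\sigma))\,d\sigma$, since the homogeneous part $e^{-A_\varepsilon^-(t-t_0)}z^\varepsilon(t_0)$ vanishes as $t_0\to-\infty$ by Lemma \ref{Linear_estimate}(i)--(ii) with rate $\beta=\lambda_{n+1}^\varepsilon\to\infty$.

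Next, on the complete metric space $\Sigma$ of bounded Lipschitz maps $s:Y_\varepsilon\to Z_\varepsilon$ with $\sup\|s\|_{X_\varepsilon^\alpha}\leq\rho_0$ and Lipschitz constant $\leq\Delta_0$, I would introduce the operator $\mathcal{T}$: for $s\in\Sigma$ and $\eta\in Y_\varepsilon$, let $v^\varepsilon(\cdot;\eta,s)$ be the unique solution on $(-\infty,0]$ of $\dot v+A_\varepsilon^+ v=Q_\varepsilon f_\varepsilon(v+s(v))$ with $v(0)=\eta$ (solvable backward in time because $e^{-A_\varepsilon^+t}$ is a group decaying as $t\to-\infty$ by Lemma \ref{Linear_estimate}(iv)), and set $\mathcal{T}(s)(\eta)=\int_{-\infty}^{0}e^{A_\varepsilon^-\sigma}(I-Q_\varepsilon)f_\varepsilon(v^\varepsilon(\sigma)+s(v^\varepsilon(\sigma)))\,d\sigma$. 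The uniform bound and Lipschitz constant of $f_\varepsilon$, together with the finiteness of $\int_{-\infty}^{0}e^{\beta\sigma}|\sigma|^{-\alpha}d\sigma$ which is small for $\beta$ large, render $\mathcal{T}$ a strict contraction on $\Sigma$ for $\varepsilon$ small enough; Banach's theorem produces $s_\ast^\varepsilon$.

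The heart of the argument is the rate bound \eqref{estimate_invariant_manifold_reaction_diffusion}. Applied to the fixed point, I would split
\[
(I-Q_\varepsilon)f_\varepsilon(u^\varepsilon)=(I-Q_\varepsilon)\bigl[f_\varepsilon(u^\varepsilon)-E_\varepsilon f_0(M_\varepsilon u^\varepsilon)\bigr]+(I-Q_\varepsilon)E_\varepsilon f_0(M_\varepsilon u^\varepsilon).
\]
The second summand is controlled in $X_\varepsilon^\alpha$ directly by Lemma \ref{projectiom_estimate}(ii): $\|(I-Q_\varepsilon)E_\varepsilon f_0(M_\varepsilon u^\varepsilon)\|_{X_\varepsilon^\alpha}\leq C\tau(\varepsilon)$ since $f_0$ is uniformly bounded. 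The first summand is controlled in $X_\varepsilon$ by \eqref{rate_nonlinearity}, yielding $\rho(\varepsilon)$ plus a multiple of $\|u^\varepsilon-E_\varepsilon M_\varepsilon u^\varepsilon\|_{X_\varepsilon^\alpha}\leq\|v^\varepsilon-E_\varepsilon M_\varepsilon v^\varepsilon\|_{X_\varepsilon^\alpha}+C|\!|\!|s_\ast^\varepsilon|\!|\!|$; the first piece is absorbed into $C\tau(\varepsilon)$ by the $O(\tau(\varepsilon))$ convergence of eigenfunctions encoded in Proposition \ref{spectral_properties}(i) (applied on the finite-dimensional space $Y_\varepsilon$). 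Integrating against $e^{A_\varepsilon^-\sigma}$ (with the smoothing gain $|\sigma|^{-\alpha}$ from Lemma \ref{Linear_estimate}(ii) for the $X_\varepsilon$-piece and the plain $X_\varepsilon^\alpha$ estimate of Lemma \ref{Linear_estimate}(i) for the other), and using that $\int_{-\infty}^{0}e^{\beta\sigma}(1+|\sigma|^{-\alpha})\,d\sigma$ stays uniformly bounded and the coefficient of $|\!|\!|s_\ast^\varepsilon|\!|\!|$ can be made $\leq\tfrac{1}{2}$, I would obtain $|\!|\!|s_\ast^\varepsilon|\!|\!|\leq C(\tau(\varepsilon)+\rho(\varepsilon))+\tfrac{1}{2}|\!|\!|s_\ast^\varepsilon|\!|\!|$, which gives \eqref{estimate_invariant_manifold_reaction_diffusion} after absorption.

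The remaining statements are standard consequences. Invariance is built into the Lyapunov--Perron construction; exponential attraction of $\mathcal{M}_\varepsilon$ follows from the spectral gap $\beta-\gamma\to\infty$ by a classical Gronwall-type argument on the $z^\varepsilon$-equation. Since $\mathcal{A}_\varepsilon$ is compact, invariant under the full (forward and backward) flow, and consists of orbits of bounded global solutions, each of its points lies on a bounded global trajectory and hence, by uniqueness in the Lyapunov--Perron ansatz, sits on $\mathcal{M}_\varepsilon$; the reduced equation for $v^\varepsilon=Q_\varepsilon u^\varepsilon$ then follows by projecting \eqref{perturbed_problem}. The main obstacle I expect is the one in the third paragraph: routing the smallness of $(I-Q_\varepsilon)f_\varepsilon$ through two distinct mechanisms (\eqref{rate_nonlinearity} for the residual and Lemma \ref{projectiom_estimate}(ii) for the $f_0$-part) while preventing the convolution kernel from degrading the rate, and rigorously absorbing the nonlinear self-coupling $|\!|\!|s_\ast^\varepsilon|\!|\!|$ that appears on the right-hand side.
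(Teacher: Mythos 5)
Your plan follows the same Lyapunov--Perron construction as the paper, but the route you take for the rate bound \eqref{estimate_invariant_manifold_reaction_diffusion} is genuinely different. The paper introduces an auxiliary trajectory $v^0$ of the \emph{limiting} ODE \eqref{unperturbed_problem} with $v^0(0)=M_\varepsilon\eta$, splits $G_\varepsilon(v^\varepsilon,s_*^\varepsilon(v^\varepsilon))=[G_\varepsilon(v^\varepsilon,s_*^\varepsilon(v^\varepsilon))-G_\varepsilon(E_\varepsilon v^0,0)]+G_\varepsilon(E_\varepsilon v^0,0)$, and controls the first bracket by a Gronwall comparison of the two evolutions; you instead compare $u^\varepsilon(\sigma)$ \emph{pointwise in time} with its projection $E_\varepsilon M_\varepsilon u^\varepsilon(\sigma)$, so the only reference to the limit dynamics is through $f_0$, and no trajectory-level Gronwall for $\|v^\varepsilon-E_\varepsilon v^0\|$ is needed. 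This is a cleaner decomposition and it does work, but two points should be tightened. First, the key inequality $\|v-E_\varepsilon M_\varepsilon v\|_{X_\varepsilon^\alpha}\leq C\tau(\varepsilon)\|v\|_{X_\varepsilon^\alpha}$ for $v\in Y_\varepsilon$ does not come from Proposition~\ref{spectral_properties}(i) (which compares $Q_\varepsilon(\lambda_j^0)E_\varepsilon$ with $E_\varepsilon Q_0(\lambda_j^0)$, both acting from $X_0$); it is precisely Lemma~\ref{Linear_estimate}(vi)/(vii) evaluated at $t=0$, i.e.\ the contour-integral estimate $\|Q_\varepsilon z-E_\varepsilon M_\varepsilon z\|\leq M\tau(\varepsilon)\|z\|$ for $z\in Y_\varepsilon$, and you should invoke that instead. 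Second, $\|v^\varepsilon(\sigma)\|_{X_\varepsilon^\alpha}$ is not uniformly bounded as $\sigma\to-\infty$; the backward solve gives growth of order $e^{-\gamma\sigma}$, so the term $C\tau(\varepsilon)\|v^\varepsilon(\sigma)\|$ is not simply $C\tau(\varepsilon)$. You still need one Gronwall estimate on the $v$-equation to get $\|v^\varepsilon(\sigma)\|\lesssim e^{-\gamma\sigma}\|\eta\|$, and then the convolution $\int_{-\infty}^0 e^{\beta\sigma}e^{-\gamma\sigma}\,d\sigma=(\beta-\gamma)^{-1}$ absorbs it thanks to the spectral gap $\beta-\gamma\to\infty$; your statement that ``$\int_{-\infty}^0 e^{\beta\sigma}(1+|\sigma|^{-\alpha})\,d\sigma$ stays uniformly bounded'' omits this exponential factor and should be corrected. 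With those two fixes your argument gives the same $|\!|\!|s_\ast^\varepsilon|\!|\!|\leq C(\tau(\varepsilon)+\rho(\varepsilon))+\tfrac12|\!|\!|s_\ast^\varepsilon|\!|\!|$ absorption as the paper, by a shorter path.
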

\begin{proof}
Given $L,\Delta>0$ we consider the set  
$$
\Sigma_\varepsilon = \Big\{s^\varepsilon: Y_\varepsilon\to Z_\varepsilon\,;\, |\!|\!| s^\varepsilon|\!|\!|\leq D  \tn{ and }\|s^\varepsilon(v)-s^\varepsilon(\tilde{v})\|_{X_\varepsilon^{\alpha}}\leq \Delta \|v-\tilde{v}\|_{Y_\varepsilon}\Big\}.
$$
Thus $(\Sigma_\varepsilon, |\!|\!| \cdot |\!|\!|)$ is a complete metric space. We write the solution $u^\varepsilon$ of \eqref{perturbed_problem} as $u^\varepsilon=v^\varepsilon+z^\varepsilon$, with $v^\varepsilon\in Y_\varepsilon$ and $z^\varepsilon\in Z_\varepsilon$ and since $Q_\varepsilon$ and $I-Q_\varepsilon$ commute with $A_\varepsilon$,  we can write \eqref{perturbed_problem} in the coupled form 
\begin{equation}\label{couple_system}
\begin{cases}
v_t^\varepsilon+A_\varepsilon^+ v^\varepsilon = Q_\varepsilon f_\varepsilon(v^\varepsilon+z^\varepsilon):=H_\varepsilon(v^\varepsilon,z^\varepsilon)\\
z_t^\varepsilon+A_\varepsilon^- z^\varepsilon = (I-Q_\varepsilon)f_\varepsilon(v^\varepsilon+z^\varepsilon):=G_\varepsilon(v^\varepsilon,z^\varepsilon).
\end{cases}
\end{equation}
Since $f_\varepsilon$ is continuously differentiable we can choose  $\rho>0$ such that for all $ v^\varepsilon,\tilde{v}^\varepsilon\in Y_\varepsilon$ and $z^\varepsilon,\tilde{z}^\varepsilon\in Z_\varepsilon$, we have
\begin{align}\label{est_man_1}
&\|H_\varepsilon( v^\varepsilon,z^\varepsilon)\|_{X^\alpha_\varepsilon}\leq \rho,\quad\|G_\varepsilon( v^\varepsilon,z^\varepsilon)\|_{X_\varepsilon^\alpha}\leq \rho,\nonumber \\
&\|H_\varepsilon( v^\varepsilon,z^\varepsilon)-H_\varepsilon( \tilde{v}^\varepsilon,\tilde{z}^\varepsilon)\|_{X^\alpha_\varepsilon}\leq \rho (\|v^\varepsilon-\tilde{v}_\varepsilon\|_{X^\alpha_\varepsilon}+\|z^\varepsilon-\tilde{z}_\varepsilon \|_{X_\varepsilon^\alpha}),\\
&\|G_\varepsilon( v^\varepsilon,z^\varepsilon)-G_\varepsilon( \tilde{v}^\varepsilon,\tilde{z}^\varepsilon)\|_{X_\varepsilon^\alpha}\leq \rho (\|v^\varepsilon-\tilde{v}_\varepsilon\|_{X_\varepsilon^\alpha} +\|z^\varepsilon-\tilde{z}_\varepsilon \|_{X_\varepsilon^\alpha}),\nonumber
\end{align}
and for $\varepsilon$ sufficiently small, we can take
\begin{align}\label{est_man_2}
& \ds\rho M\beta^{-1}\leq D,\quad 0 \leq \beta-\gamma-\rho M(1+\Delta),\quad 0 \leq \beta-\gamma-\rho M,\nonumber\\
& \ds\frac{\rho M^2(1+\Delta)}{\beta-\gamma-\rho M(1+\Delta)}\leq \Delta,\quad \ds\rho M \beta^{-1}+\frac{\rho^2M^2(1+\Delta)\gamma^{-1}}{\beta-\gamma-\rho M(1+\Delta)} \leq \frac{1}{2},\\
& \ds\rho M \beta^{-1}+\frac{\rho^2M^2\gamma^{-1}}{\beta-\gamma-\rho M} <1,\quad \ds L=\Big[\rho M+\frac{\rho^2 M^2(1+\Delta)(1+M)}{\beta-\gamma-\rho M(1+\Delta)} \Big],\,\, \beta-L> 0\nonumber,
\end{align}
where $\beta$, $\gamma$ and $M$ are given in the Lemma \ref{Linear_estimate}. Note that, since we have the gap condition $\beta=\beta(\varepsilon)=\lambda^\varepsilon_{n+1}\to \infty$ as $n\to\infty$, the estimates \eqref{est_man_2} are satisfied.   

Let $s^\varepsilon\in \Sigma_\varepsilon$ and $v^\varepsilon(t)=v^\varepsilon(t,\tau,\eta,s^\varepsilon)$ be the solution of 
$$
\begin{cases}
v_t^\varepsilon+A_\varepsilon^+ v^\varepsilon=H_\varepsilon(v^\varepsilon,s^\varepsilon(v^\varepsilon)),\quad  t<\tau \\
v^\varepsilon(\tau)=\eta\in Y_\varepsilon.
 \end{cases}
$$
We define $\Phi_\varepsilon: \Sigma_\varepsilon\to\Sigma_\varepsilon$ given by
$$
\Phi_\varepsilon(s^\varepsilon)(\eta)=\int_{-\infty}^{\tau} e^{-A_\varepsilon^- (\tau - r)}G_\varepsilon(v^\varepsilon(r),s^\varepsilon(v^\varepsilon(r)) )\,dr.
$$
Note that $G_\varepsilon$ and $H_\varepsilon$ are maps acting in $X_\varepsilon^\alpha$, then by \eqref{est_man_1}, \eqref{est_man_2} and Lemma \ref{Linear_estimate}, we have 
$$
\|\Phi_\varepsilon(s^\varepsilon)(\eta)\|_{X_\varepsilon^{\alpha}}\leq \rho M \int_{-\infty}^\tau e^{-\beta (\tau-r)}\,dr=\rho M\beta^{-1} \leq D.
$$ 
Also for $s^\varepsilon, \tilde{s^\varepsilon}\in \Sigma_\varepsilon$, $\eta,\tilde{\eta}\in Y_\varepsilon$,  $v^\varepsilon(t)=v^\varepsilon(t,\tau,\eta,s^\varepsilon)$ and $\tilde{v}^\varepsilon(t)=\tilde{v}^\varepsilon(t,\tau,\tilde{\eta},\tilde{s}^\varepsilon)$ we have
\begin{multline*}
v^\varepsilon(t)-\tilde{v}^\varepsilon(t) = e^{-A_\varepsilon^+ (t-\tau)}(\eta-\tilde{\eta}) \\ +\int_{\tau}^t   e^{-A_\varepsilon^+ (t - r)}[H_\varepsilon(v^\varepsilon(r),s^\varepsilon(v^\varepsilon(r)) )-H_\varepsilon(\tilde{v}^\varepsilon(r),\tilde{s}^\varepsilon(\tilde{v}^\varepsilon(r)) )] \,dr,
\end{multline*}
and we can prove using Gronwall's inequality that
$$
\|v^\varepsilon(t)-\tilde{v}^\varepsilon(t)\|_{X_\varepsilon^\alpha}\leq \Big[M\|\eta-\tilde{\eta}\|_{X_\varepsilon^\alpha}+\rho M \gamma^{-1}|\!|\!| s^\varepsilon-\tilde{s}^\varepsilon |\!|\!|  \Big]  e^{[\rho M(1+\Delta)+\gamma](\tau-t)}.
$$
From this we obtain 
\begin{multline*}
\|\Phi_\varepsilon(s^\varepsilon)(\eta)-\Phi_\varepsilon(\tilde{s}^\varepsilon)(\tilde{\eta})\|_{X_\varepsilon^{\alpha}}\\ \leq \Big[\frac{\rho M^2(1+\Delta)}{\beta-\gamma-\rho M(1+\Delta)}\Big] \|\eta-\tilde{\eta}\|_{X_\varepsilon^\alpha}+\Big[\rho M\beta^{-1}+\frac{\rho^2M^2(1+\Delta)\gamma^{-1}}{\beta-\gamma-\rho M (1+\Delta)}\Big]|\!|\!|s^\varepsilon-\tilde{s}^\varepsilon |\!|\!|.
\end{multline*}
Therefore $\Phi_\varepsilon$ is a contraction on $\Sigma_\varepsilon$ and there is a unique  $s_\ast^\varepsilon \in \Sigma_\varepsilon$. 

Let $(\bar{v}^\varepsilon,\bar{z}^\varepsilon)\in \mathcal{M}_\varepsilon$, $\bar{z}^\varepsilon=s_\ast^\varepsilon(\bar{v}^\varepsilon)$ and let $v_{s_\ast}^\varepsilon(t)$ be the solution of 
$$
\begin{cases}
v_t^\varepsilon+A_\varepsilon^+ v^\varepsilon=H_\varepsilon(v^\varepsilon,s_*^\varepsilon(v^\varepsilon)),\quad  t<\tau \\
v^\varepsilon(0)=\bar{v}^\varepsilon.
 \end{cases}
$$ 
Thus, $\{(v_{s_*}^\varepsilon(t), s_*^\varepsilon(v_{s_*}^\varepsilon(t))\}_{t\in\R}$ defines a curve on $\mathcal{M}_\varepsilon$. But the only solution of equation 
$$
z_t^\varepsilon+A_\varepsilon^- z^\varepsilon=G_\varepsilon(v_{s_*}^\varepsilon(t),s_*^\varepsilon(v_{s_*}^\varepsilon(t)))
$$ 
which stay bounded when $t\to-\infty$ is given by
$$
z_{s_*}^\varepsilon=\int_{-\infty}^t e^{-A_\varepsilon^- (t-r)}G_\varepsilon(v_{s_*}^\varepsilon(t),s_*^\varepsilon(v_{s_*}^\varepsilon(t)))\,dr = s_*^\varepsilon(v_{s_*}^\varepsilon(t)).
$$
Therefore $(v_{s_*}^\varepsilon(t), s_*^\varepsilon(v_{s_*}^\varepsilon(t))$ is a solution of \eqref{perturbed_problem} through $(\bar{v}^\varepsilon,\bar{z}^\varepsilon)$ and thus $\mathcal{M}_\varepsilon$ is a invariant manifold.

Now we will prove the estimate \eqref{estimate_invariant_manifold_reaction_diffusion}. We have $Q_0=I_{X_0}$ which implies $I-Q_0=0$, therefore we can assume $\mathcal{M}_0=X_0$, $s_\ast^0=0$ and $G_0(v^0,s_*^0(v^0))=0$, where $v^0$ is solution of \eqref{unperturbed_problem} with $v^0(0)=M_\varepsilon\eta$ and $\|\eta\|_{X_\varepsilon^\alpha}\leq C$, for some constant $C$ independent of $\varepsilon$. Thus 
\begin{align*}
\|s^\varepsilon_*(\eta)\|_{X_\varepsilon^\alpha}& \leq \int_{-\infty}^\tau \|e^{-A_\varepsilon^-(\tau-r)}G_\varepsilon(v^\varepsilon,s_*^\varepsilon(v^\varepsilon))\|_{X_\varepsilon^\alpha}\,dr\\
& \leq \int_{-\infty}^\tau \|e^{-A_\varepsilon^-(\tau-r)}G_\varepsilon(v^\varepsilon,s_*^\varepsilon(v^\varepsilon))-e^{-A_\varepsilon^-(\tau-r)}G_\varepsilon(E_\varepsilon v^0,0)\|_{X_\varepsilon^\alpha}\,dr\\
& + \int_{-\infty}^\tau \|e^{-A_\varepsilon^-(\tau-r)}G_\varepsilon(E_\varepsilon v^0,0)\|_{X_\varepsilon^\alpha}\,dr.
\end{align*}
If we denote the last two integrals for $I_1$ and $I_2$ respectively, with the aid of \eqref{est_man_1}, \eqref{est_man_2} and Lemma \ref{Linear_estimate}, we get
\begin{align*}
I_1&\leq \int_{-\infty}^\tau M e^{-\beta (\tau - r)}\rho [\|v^\varepsilon-E_\varepsilon v^0\|_{X_\varepsilon^\alpha}+|\!|\!|s_*^\varepsilon|\!|\!|]\,dr\\
&\leq \rho M\int_{-\infty}^\tau e^{-\beta (\tau - r)}\|v^\varepsilon-E_\varepsilon v^0\|_{X_\varepsilon^\alpha}\,dr \\
& +\rho M |\!|\!|s_*^\varepsilon|\!|\!|\int_{-\infty}^\tau  e^{-\beta (\tau - r)}\,dr\\
&= \rho M\int_{-\infty}^\tau e^{-\beta (\tau-r)}\|v^\varepsilon-E_\varepsilon v^0\|_{X_\varepsilon^\alpha}\,dr +\rho M\beta^{-1} |\!|\!|s_*^\varepsilon|\!|\!|.
\end{align*}
For $I_2$, observe that
$$
G_\varepsilon(E_\varepsilon v^0,0)=(I-Q_\varepsilon)f_\varepsilon(E_\varepsilon v^0)=(I-Q_\varepsilon)[f_\varepsilon(E_\varepsilon v^0)-E_\varepsilon f_0(v^0)] + (I-Q_\varepsilon)E_\varepsilon f_0(v^0),
$$
thus by \eqref{rate_nonlinearity} and Lemma \eqref{projectiom_estimate}, we have
$
I_2\leq C (\tau(\varepsilon)+\rho(\varepsilon)),
$
for some constant $C$ independent of $\varepsilon$. Therefore
\begin{align*}
\|s^\varepsilon_*(\eta)\|_{X_\varepsilon^{\alpha}} & \leq C(\tau(\varepsilon)+\rho(\varepsilon))+\rho M\beta^{-1} |\!|\!|s_*^\varepsilon|\!|\!| + \rho M\int_{-\infty}^\tau e^{-\beta(\tau-r)}\|v^\varepsilon-E_\varepsilon v^0\|_{X_\varepsilon^\alpha}\,dr.
\end{align*}
But, for $t<\tau$, we have
$$
v^\varepsilon(t)=e^{-A_\varepsilon^+ (t-\tau)}\eta+\int_{t}^\tau e^{-A_\varepsilon^+ (t-r)}H_\varepsilon(v^\varepsilon(r),s_\ast^\varepsilon(v^\varepsilon(r)))\,dr,
$$ 
$$
v^0(t)=e^{-A_0(t-\tau)}M_\varepsilon \eta+\int_{t}^\tau e^{-A_0(t-r)}f_0(v^0(r))\,dr,
$$ 
then
\begin{multline*}
\|v^\varepsilon(t)-E_\varepsilon v^0(t)\|_{X_\varepsilon^\alpha} \leq \|(e^{-A^+_\varepsilon (t-\tau)}-E_\varepsilon e^{-A_0 (t-\tau)}M_\varepsilon)\eta\|_{X_\varepsilon^\alpha} \\ +\int_t^{\tau} \|e^{-A_\varepsilon^+(t-r)}H_\varepsilon(v^\varepsilon(r),s_\ast^\varepsilon(v^\varepsilon(r)))-E_\varepsilon e^{-A_0(t-r)}f_0(v^0(r))\|_{X_\varepsilon^\alpha}\,dr.
\end{multline*}
We can write
\begin{align*}
\int_t^{\tau} \|& e^{-A_\varepsilon^+(t-r)} H_\varepsilon(v^\varepsilon(r),s_\ast^\varepsilon(v^\varepsilon(r)))-E_\varepsilon e^{-A_0(t-r)}f_0(v^0(r))\|_{X_\varepsilon^\alpha}\,dr \\
&\leq \int_t^{\tau} \|e^{-A_\varepsilon^+(t-r)}H_\varepsilon(v^\varepsilon(r),s_\ast^\varepsilon(v^\varepsilon(r)))-E_\varepsilon e^{-A_0(t-r)}M_\varepsilon H_\varepsilon(v^\varepsilon(r),s_\ast^\varepsilon(v^\varepsilon(r)))\|_{X_\varepsilon^\alpha}\,dr \\
& + \int_t^{\tau} \|E_\varepsilon e^{-A_0(t-r)}M_\varepsilon [H_\varepsilon(v^\varepsilon(r),s_\ast^\varepsilon(v^\varepsilon(r)))-H_\varepsilon(E_\varepsilon v^0(r),0)]\|_{X_\varepsilon^\alpha}\,dr\\ 
& + \int_t^{\tau} \|E_\varepsilon e^{-A_0(t-r)}M_\varepsilon [H_\varepsilon(E_\varepsilon v^0(r),0)-E_\varepsilon  f_0(v^0(r))]\|_{X_\varepsilon^\alpha}\,dr.
\end{align*}
If we denote $\varphi(t)=\|v^\varepsilon(t)-E_\varepsilon v^0(t)\|_{X_\varepsilon^\alpha}e^{\gamma (t-\tau)}$, by \eqref{rate_nonlinearity}, \eqref{est_man_1}, \eqref{est_man_2} and the Lemmas \ref{projectiom_estimate} and \ref{Linear_estimate}, we obtain 
$$
\varphi(t)\leq C(\tau(\varepsilon)+\rho(\varepsilon))+\rho M \gamma^{-1}|\!|\!|s_*^\varepsilon|\!|\!|+\rho M\int_t^\tau \varphi(r)\,dr,
$$
which implies by Gronwall's inequality
$$
\|v^\varepsilon(t)-E_\varepsilon v^0(t)\|_{X_\varepsilon^\alpha}\leq [C(\tau(\varepsilon)+\rho(\varepsilon))+\rho M\gamma^{-1}|\!|\!|s_*^\varepsilon|\!|\!|]e^{(\rho M+\gamma)(\tau-t)},
$$
thus
\begin{align*}
&\|s_*^\varepsilon(\eta)\|_{X_\varepsilon^\alpha} \leq C(\tau(\varepsilon)+\rho(\varepsilon))+[\rho M\beta^{-1}+\frac{\rho^2 M^2 \gamma^{-1}}{\beta-\gamma-\rho M}]|\!|\!|s_*^\varepsilon|\!|\!|.
\end{align*}
It follows from \eqref{est_man_2} that  $|\!|\!|s_*^\varepsilon|\!|\!|\leq C(\tau(\varepsilon)+\rho(\varepsilon)).$

It remains that $\mathcal{M}_\varepsilon$ is exponentially attracting and $\mathcal{A}_\varepsilon\subset\mathcal{M}_\varepsilon$. Let $(v^\varepsilon,z^\varepsilon)\in Y_\varepsilon\oplus Z_\varepsilon$ be the solution of \eqref{couple_system} and define $\xi^\varepsilon(t)=z^\varepsilon-s_*^\varepsilon(v^\varepsilon(t))$ and consider $y^\varepsilon(r,t), r\leq t$, $t\geq 0$, the solution of 
$$
\begin{cases}
y_t^\varepsilon+A_\varepsilon^+ y^\varepsilon=H_\varepsilon(y^\varepsilon,s_*^\varepsilon(y^\varepsilon)),\quad  r\leq t \\
y^\varepsilon(t,t)=v^\varepsilon(t).
\end{cases}
$$
Thus,
\begin{align*}
\|y^\varepsilon(r,t)-&v^\varepsilon(r)\|_{X_\varepsilon^\alpha} \\
& =\Big\|\int_t^r e^{-A_\varepsilon^+(r-\theta)}[H_\varepsilon(y^\varepsilon(\theta,t),s_*^\varepsilon(y^\varepsilon(\theta,t)))-H_\varepsilon(v^\varepsilon(\theta),z^\varepsilon
(\theta))]\,d\theta\Big\|_{X_\varepsilon^\alpha} \\
&\leq \rho M \int_r^t e^{-\gamma(r-\theta)}[(1+\Delta)\|y^\varepsilon(\theta,t)-v^\varepsilon(\theta)\|_{X_\varepsilon^\alpha}+\|\xi^\varepsilon(\theta)\|_{X_\varepsilon^{\alpha}}]\,d\theta.
\end{align*}
By Gronwall's inequality
$$
\|y^\varepsilon(r,t)-v^\varepsilon(r)\|_{X_\varepsilon^\alpha}\leq\rho M \int_r^{t} e^{-(-\gamma-\rho M(1+\Delta))(\theta-r)} \|\xi^\varepsilon(\theta)\|_{X_\varepsilon^{\alpha}}\,d\theta\quad r\leq t.
$$
Now we take $t_0\in[r,t]$ and then
\begin{align*}
\|y^\varepsilon(r,t) &-y^\varepsilon(r,t_0)\|_{X_\varepsilon^\alpha}\\ 
& = \|e^{-A_\varepsilon^+ (r-t_0)}[y(t_0,t)-v^\varepsilon(t_0)]\|_{X_\varepsilon^\alpha} \\
& +\Big\| \int_{t_0}^r e^{-A_\varepsilon^+(r-\theta)}[H_\varepsilon(y^\varepsilon(\theta,t),s_*^\varepsilon(y^\varepsilon(\theta,t)))-H_\varepsilon(y^\varepsilon(\theta,t_0),s_*^\varepsilon(y^\varepsilon(\theta,t_0)))]\, d\theta\Big\|_{X_\varepsilon^\alpha} \\
&\leq \rho M^2 e^{-\gamma (r-t_0)} \int_{t_0}^{t} e^{-(-\gamma-\rho M(1+\Delta))(\theta-t_0)}\|\xi^\varepsilon(\theta)\|_{X_\varepsilon^{\alpha}}\,d\theta\\
&+\rho M\int_{r}^{t_0} e^{-\gamma(r-\theta)}(1+\Delta)\| y^\varepsilon(\theta,t)-y^\varepsilon(\theta,t_0)\|_{X_\varepsilon^\alpha}\,d\theta. 
\end{align*}
By Gronwall's inequality
$$
\|y^\varepsilon(r,t)-y^\varepsilon(r,t_0)\|_{X_\varepsilon^\alpha} \leq \rho M^2 \int_{t_0}^{t} e^{-(-\gamma-\rho M(1+\Delta))(\theta-r)} \|\xi^\varepsilon(\theta)\|_{X_\varepsilon^\alpha}\,d\theta.
$$
Since
$$
z^\varepsilon(t)=e^{-A_\varepsilon^- (t-t_0)}z^\varepsilon(t_0)+\int_{t_0}^t e^{-A_\varepsilon^- (t-r)}G_\varepsilon(v^\varepsilon(r),z^\varepsilon(r))\,dr,
$$
we can estimate $\xi^\varepsilon(t)$ as
\begin{align*}
e^{\beta (t-t_0)} \|\xi^\varepsilon(t) & \|_{X_\varepsilon^{\alpha}} \leq M \|\xi^\varepsilon(t_0)\|_{X_\varepsilon^{\alpha}}+ \Big[\rho M+\frac{\rho^2 M^2(1+\Delta)}{\beta-\gamma-\rho M(1+\Delta) }\Big]\int_{t_0}^t e^{\beta(r-t_0)}\|\xi^\varepsilon(r)\|_{X_\varepsilon^{\alpha}} \,dr\\
& +\frac{\rho^2M^3(1+\Delta)}{\beta-\gamma-\rho M (1+\Delta)}\int_{t_0}^t e^{-(\beta-\gamma-\rho M(1+\Delta)(\theta-t_0)}e^{\beta (\theta-t_0)}\|\xi^\varepsilon(\theta)\|_{X_\varepsilon^{\alpha}}\,d\theta\\
&\leq M \|\xi^\varepsilon(t_0)\|_{X_\varepsilon^{\alpha}}+\Big[\rho M+\frac{\rho^2 M^2(1+\Delta)(1+M)}{\beta-\gamma-\rho M(1+\Delta)}\Big]\int_{t_0}^t e^{\beta(r-t_0)}\|\xi^\varepsilon(r)\|_{X_\varepsilon^{\alpha}}\,dr.
\end{align*}
By Gronwall's inequality
$$
\|\xi^\varepsilon(t)\|_{X_\varepsilon^{\alpha}}\leq M\|\xi^\varepsilon(t_0)\|_{X_\varepsilon^{\alpha}}e^{-(L-\beta)(t-t_0)},
$$
and then
$$
\|z^\varepsilon(t)-s_*^\varepsilon(v^\varepsilon(t))\|_{X_\varepsilon^{\alpha}}=\|\xi^\varepsilon(t)\|_{X_\varepsilon^{\alpha}}\leq M\|\xi^\varepsilon(t_0)\|_{X_\varepsilon^{\alpha}}e^{-(L-\beta)(t-t_0)}.
$$

Now if $u^\varepsilon:=T_\varepsilon(t)u_0^\varepsilon=v^\varepsilon(t)+z^\varepsilon(t)$, $t\in\R$, denotes the solution through at $u_0^\varepsilon=v_0^\varepsilon+z_0^\varepsilon\in \mathcal{A}_\varepsilon$, then
$$
\|z^\varepsilon(t)-s_*^\varepsilon(v^\varepsilon(t))\|_{X_\varepsilon^{\alpha}}\leq M\|z_0^\varepsilon-s_*^\varepsilon(v_0^\varepsilon)\|_{X_\varepsilon^{\alpha}}e^{-(L-\beta)(t-t_0)}.
$$
Since $\{T_\varepsilon(t)u_0^\varepsilon\,;\,t\in\R\}\subset\mathcal{A}_\varepsilon$ is bounded, letting $t_0\to-\infty$ we obtain $T_\varepsilon(t)u_0^\varepsilon=v^\varepsilon(t)+s_*^\varepsilon(v^\varepsilon(t))\in\mathcal{M}_\varepsilon$. That is $\mathcal{A}_\varepsilon\subset\mathcal{M}_\varepsilon$. Moreover, if $B_\varepsilon\subset X_\varepsilon^\alpha$ is a bounded set and  $u_0^\varepsilon=v_0^\varepsilon+z_0^\varepsilon\in B_\varepsilon$, and we conclude that $T_\varepsilon(t)u_0^\varepsilon=v^\varepsilon(t)+z^\varepsilon(t)$ satisfies
\begin{align*}
\sup_{u_0^\varepsilon\in B_\varepsilon}\inf_{w\in\mathcal{M}_\varepsilon}\|T_\varepsilon(t)u_0^\varepsilon-w\|_{X_\varepsilon^\alpha}&\leq \sup_{u_0^\varepsilon\in B_\varepsilon}\|z^\varepsilon(t)-s_*^\varepsilon(v^\varepsilon(t))\|_{X_\varepsilon^\alpha}\\
&\leq Me^{-(L-\beta)(t-t_0)} \sup_{u_0^\varepsilon\in B_\varepsilon}\|z_0^\varepsilon-s_*^\varepsilon(v_0^\varepsilon)\|_{X_\varepsilon^{\alpha}},
\end{align*}
which implies
$$
\tn{dist}_\varepsilon(T_\varepsilon(t)B_\varepsilon,\mathcal{M}_\varepsilon)\leq Ce^{-(L-\beta)(t-t_0)},
$$
and thus the proof is complete.
\end{proof}

\begin{remark}
It is well known $($see \cite{Santamaria2014} and \cite{Sell2002}$)$ the $C^0$, $C^1$ and $C^{1,\theta}$ convergences of invariant manifolds. That is  
$
\|s_\ast^\varepsilon\|_{C^0}, \|s_\ast^\varepsilon\|_{C^1}, \|s_\ast^\varepsilon\|_{C^{1,\theta}}\overset{\varepsilon\to 0} \longrightarrow 0.
$
\end{remark}

\section{Rate of Convergence}\label{Rate of Convergence}
     
In this section we estimate the convergence of attractors $\mathcal{A}_\varepsilon$ of \eqref{perturbed_problem} to the attractor $\mathcal{A}_0$ of the \eqref{unperturbed_problem} by convergence of these atrractors immersed in $\R^n$. We also prove the convergence of the nonlinear semigroups.

We start defining convergence for a family of subsets in $X_\varepsilon^\alpha$. 

\begin{definition}\label{continuity_attractor_rate}
We say that a family $\{\mathcal{A}_\varepsilon\}_{\varepsilon\in (0,\varepsilon_0]}$ of subsets of $X_\varepsilon^\alpha$ converge to $\mathcal{A}_0\subset X_0$ as $\varepsilon\to 0$ if,
$$
\tn{d}_\varepsilon(\mathcal{A}_\varepsilon,E_\varepsilon \mathcal{A}_0)\overset{\varepsilon\to 0}\longrightarrow 0.
$$ 
\end{definition}

We saw in the last section that the invariant manifold $\mathcal{M}_\varepsilon$, $\varepsilon\in (0,\varepsilon_0]$, contains the attractor $\mathcal{A}_\varepsilon$ and the flow is given by 
$$
u^\varepsilon(t)=v^\varepsilon(t)+s_\ast^\varepsilon(v^\varepsilon(t)), \quad t\in\R,
$$ 
where $v^\varepsilon(t)$ satisfy the following ordinary differential equation
\begin{equation}\label{reducted_solution_edo_reaction_diffusion}
\dot{v^\varepsilon}+A_\varepsilon^+v^\varepsilon=Q_\varepsilon f(v^\varepsilon+ s_\ast^\varepsilon(v^\varepsilon)).
\end{equation}

Since $v^\varepsilon\in Y_\varepsilon$ we can consider $H_\varepsilon(v^\varepsilon)=Q_\varepsilon f(v^\varepsilon+ s_\ast^\varepsilon(v^\varepsilon))$ a continuously differentiable map in $Y_\varepsilon$. We denote $\tilde{T}_\varepsilon(t)$ the nonlinear semigroup generated by the solution of \eqref{reducted_solution_edo_reaction_diffusion},  $\tilde{T}_\varepsilon=\tilde{T}_\varepsilon(1)$ and $T_0=T_0(1)$.

Now we are ready to estimate the projected nonlinear semigroup reduced to the invariant manifold.

\begin{theorem}\label{continuity_nonlinear_semigroup}
There is a positive constant $C$ independent of $\varepsilon$ such that
\begin{equation}\label{estimate_nonlinear1}
\|\tilde{T}_\varepsilon-E_\varepsilon T_0 M_\varepsilon\|_{\LL(Y_\varepsilon,X_\varepsilon^\alpha)}\leq C (\tau(\varepsilon)+\rho(\varepsilon)) 
\end{equation}
and for each $w^\varepsilon\in \mathcal{A}_\varepsilon$ and $w^0\in \mathcal{A}_0$, 
\begin{equation}\label{estimate_nonlinear2}
\|E_\varepsilon T_0 M_\varepsilon Q_\varepsilon w^\varepsilon-E_\varepsilon T_0 w^0\|_{X_\varepsilon^\alpha}\leq C\|Q_\varepsilon w^\varepsilon-E_\varepsilon w^0\|_{X_\varepsilon^\alpha}. 
\end{equation}
\end{theorem}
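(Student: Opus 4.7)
For the first estimate \eqref{estimate_nonlinear1}, I would fix $v^\varepsilon \in Y_\varepsilon$ with $\|v^\varepsilon\|_{X_\varepsilon^\alpha}\leq C$, write $v^\varepsilon(\cdot)$ for the solution of \eqref{reducted_solution_edo_reaction_diffusion} with initial value $v^\varepsilon$, and $v^0(\cdot)$ for the solution of \eqref{unperturbed_problem} with initial value $M_\varepsilon v^\varepsilon$, so that by variation of constants
\begin{align*}
\tilde T_\varepsilon v^\varepsilon &= e^{-A_\varepsilon^+} v^\varepsilon + \int_0^1 e^{-A_\varepsilon^+(1-s)} Q_\varepsilon f_\varepsilon\bigl(v^\varepsilon(s)+s_*^\varepsilon(v^\varepsilon(s))\bigr)\,ds,\\
E_\varepsilon T_0 M_\varepsilon v^\varepsilon &= E_\varepsilon e^{-A_0} M_\varepsilon v^\varepsilon + \int_0^1 E_\varepsilon e^{-A_0(1-s)} f_0(v^0(s))\,ds.
\end{align*}
Subtracting, the free-evolution difference is controlled by Lemma \ref{Linear_estimate}(vii), giving $C\tau(\varepsilon)\|v^\varepsilon\|_{X_\varepsilon^\alpha}$. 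For the integral I insert the intermediate term $E_\varepsilon e^{-A_0(1-s)} M_\varepsilon Q_\varepsilon f_\varepsilon(v^\varepsilon(s)+s_*^\varepsilon(v^\varepsilon(s)))$: the gap with the $\tilde T_\varepsilon$-integrand is again handled by Lemma \ref{Linear_estimate}(vii), contributing $C\tau(\varepsilon)$.

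The remaining term is
$$
\int_0^1 E_\varepsilon e^{-A_0(1-s)}\bigl[M_\varepsilon Q_\varepsilon f_\varepsilon(v^\varepsilon(s)+s_*^\varepsilon(v^\varepsilon(s))) - f_0(v^0(s))\bigr]\,ds,
$$
and using $M_\varepsilon E_\varepsilon = I_{X_0}$ the integrand equals
$$
M_\varepsilon\bigl[Q_\varepsilon\{f_\varepsilon(v^\varepsilon(s)+s_*^\varepsilon(v^\varepsilon(s))) - E_\varepsilon f_0(v^0(s))\} + (Q_\varepsilon E_\varepsilon - E_\varepsilon) f_0(v^0(s))\bigr].
$$
The first summand is bounded, via \eqref{rate_nonlinearity} and the estimate $|\!|\!|s_*^\varepsilon|\!|\!| \le C(\tau(\varepsilon)+\rho(\varepsilon))$ from Theorem \ref{estimate_invariant_manifold}, by $C\|v^\varepsilon(s)-E_\varepsilon v^0(s)\|_{X_\varepsilon^\alpha} + C(\tau(\varepsilon)+\rho(\varepsilon))$; the second summand is $\le C\tau(\varepsilon)$ by Lemma \ref{projectiom_estimate}(ii). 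To close the estimate I would re-run forward in time the Gronwall argument already carried out in the proof of Theorem \ref{estimate_invariant_manifold}: on the compact interval $[0,1]$ it yields
$$
\sup_{s\in[0,1]} \|v^\varepsilon(s) - E_\varepsilon v^0(s)\|_{X_\varepsilon^\alpha} \le C(\tau(\varepsilon)+\rho(\varepsilon)).
$$
Putting the three pieces together gives \eqref{estimate_nonlinear1}.

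For \eqref{estimate_nonlinear2}, a straightforward application of Gronwall's inequality to the variation of constants formula for \eqref{unperturbed_problem}, combined with the Lipschitz bound on $f_0$, shows that $T_0$ is globally Lipschitz on $X_0$ with a constant independent of $\varepsilon$. Using the uniform boundedness of $E_\varepsilon$ in $\LL(X_0,X_\varepsilon^\alpha)$ and of $M_\varepsilon$ in $\LL(X_\varepsilon^\alpha,X_0)$, together with $M_\varepsilon E_\varepsilon = I_{X_0}$, one gets
$$
\|E_\varepsilon T_0 M_\varepsilon Q_\varepsilon w^\varepsilon - E_\varepsilon T_0 w^0\|_{X_\varepsilon^\alpha} \le C\|M_\varepsilon Q_\varepsilon w^\varepsilon - w^0\|_{X_0} = C\|M_\varepsilon(Q_\varepsilon w^\varepsilon - E_\varepsilon w^0)\|_{X_0} \le C\|Q_\varepsilon w^\varepsilon - E_\varepsilon w^0\|_{X_\varepsilon^\alpha},
$$
which is \eqref{estimate_nonlinear2}. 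The main obstacle is the three-term splitting in \eqref{estimate_nonlinear1}: the delicate point is the need to absorb the nonlinearity-evaluation difference into a quantity depending on $\|v^\varepsilon(s)-E_\varepsilon v^0(s)\|$ and then close a Gronwall loop on the reduced flow, but since all ingredients (the rate on resolvents, the rate on nonlinearities, and the rate on $s_*^\varepsilon$) already have the right order $\tau(\varepsilon)+\rho(\varepsilon)$, no loss is introduced.
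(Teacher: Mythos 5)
Your proof is correct and takes essentially the same route as the paper: variation of constants, the free‑evolution difference via Lemma \ref{Linear_estimate}(vii), the integrand compared with the aid of \eqref{rate_nonlinearity}, Lemma \ref{projectiom_estimate}, and the bound $|\!|\!|s_*^\varepsilon|\!|\!|\leq C(\tau(\varepsilon)+\rho(\varepsilon))$, and then a forward Gronwall on $[0,1]$; the second estimate \eqref{estimate_nonlinear2} is proved exactly as in the paper. The only cosmetic difference is that you split the integrand two ways and apply \eqref{rate_nonlinearity} once to absorb both the Gronwall term and the $\rho(\varepsilon)$-error, whereas the paper splits it three ways (Lipschitz of $H_\varepsilon$ for the Gronwall term, then the rate at the limit point), but this yields the identical inequality before Gronwall closes the loop.
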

\begin{proof}
For $\bar{w}^\varepsilon\in Y_\varepsilon$ we have    
\begin{multline*}
\|\tilde{T}_\varepsilon(t)\bar{w}^\varepsilon-E_\varepsilon T_0(t)M_\varepsilon \bar{w}^\varepsilon\|_{X_\varepsilon^\alpha} \leq \|(e^{-A^+_\varepsilon t}-E_\varepsilon e^{-A_0 t}M_\varepsilon)\bar{w}^\varepsilon\|_{X_\varepsilon^\alpha} \\ +\int_0^{t} \|e^{-A_\varepsilon^+(t-s)}H_\varepsilon(\tilde{T}_\varepsilon(s)\bar{w}^\varepsilon)-E_\varepsilon e^{-A_0(t-s)}f_0(T_0(s)M_\varepsilon \bar{w}^\varepsilon)\|_{X_\varepsilon^\alpha}\,ds.
\end{multline*}
But 
\begin{align*}
\int_0^{t} \|e^{-A_\varepsilon^+(t-s)}& H_\varepsilon(\tilde{T}_\varepsilon(s)\bar{w}^\varepsilon)-E_\varepsilon e^{-A_0(t-s)}f_0(T_0(s)M_\varepsilon \bar{w}^\varepsilon)\|_{X_\varepsilon^\alpha}\,ds \\
&\leq \int_0^{t} \|e^{-A_\varepsilon^+(t-s)}H_\varepsilon(\tilde{T}_\varepsilon(s)\bar{w}^\varepsilon)-E_\varepsilon e^{-A_0(t-s)}M_\varepsilon H_\varepsilon(\tilde{T}_\varepsilon(s)\bar{w}^\varepsilon)\|_{X_\varepsilon^\alpha}\,ds \\
& + \int_0^{t} \|E_\varepsilon e^{-A_0(t-s)}M_\varepsilon [H_\varepsilon(\tilde{T}_\varepsilon(s)\bar{w}^\varepsilon)-H_\varepsilon(E_\varepsilon T_0(s)M_\varepsilon\bar{w}^\varepsilon)]\|_{X_\varepsilon^\alpha}\,ds\\ 
& + \int_0^{t} \|E_\varepsilon e^{-A_0(t-s)}[M_\varepsilon H_\varepsilon(E_\varepsilon T_0(s)M_\varepsilon \bar{w}^\varepsilon)-E_\varepsilon f_0(T_0(s)M_\varepsilon\bar{w}^\varepsilon)]\|_{X_\varepsilon^\alpha}\,ds.
\end{align*}
If we denote the three last integrals by $I_1$, $I_2$ and $I_3$ respectively and consider $0\leq t\leq 1$, we have from \eqref{rate_nonlinearity} and the Lemmas \ref{projectiom_estimate} and \ref{Linear_estimate} that
$$
I_1\leq C \tau(\varepsilon),\quad I_2\leq C\int_0^t \|\tilde{T}_\varepsilon(s)\bar{w}^\varepsilon-E_\varepsilon T_0(s)M_\varepsilon\bar{w}^\varepsilon\|_{X_\varepsilon^\alpha}\,ds
$$
and  
\begin{align*}
I_3 &\leq C\int_0^{t} \|M_\varepsilon H_\varepsilon(E_\varepsilon T_0(s)M_\varepsilon \bar{w}^\varepsilon)-f_0(T_0(s)M_\varepsilon\bar{w}^\varepsilon)\|_{X_0}\,ds\\
&\leq C\int_0^{t} \|H_\varepsilon(E_\varepsilon T_0(s)M_\varepsilon \bar{w}^\varepsilon)- E_\varepsilon f_0(T_0(s)M_\varepsilon\bar{w}^\varepsilon)\|_{X_\varepsilon^\alpha}\,ds\\
&\leq C\int_0^{t} \|Q_\varepsilon f_\varepsilon(E_\varepsilon T_0(s)M_\varepsilon \bar{w}^\varepsilon+s_\ast^\varepsilon(E_\varepsilon T_0(s)M_\varepsilon \bar{w}^\varepsilon))- E_\varepsilon f_0(T_0(s)M_\varepsilon\bar{w}^\varepsilon)\|_{X_\varepsilon^\alpha}\,ds\\
&\leq C\int_0^{t} \|Q_\varepsilon f_\varepsilon(E_\varepsilon T_0(s)M_\varepsilon \bar{w}^\varepsilon+s_\ast^\varepsilon(E_\varepsilon T_0(s)M_\varepsilon \bar{w}^\varepsilon))-Q_\varepsilon E_\varepsilon f_0(T_0(s)M_\varepsilon\bar{w}^\varepsilon)\|_{X_\varepsilon^\alpha}\,ds\\
&+ C\int_0^{t} \|Q_\varepsilon E_\varepsilon f_0(T_0(s)M_\varepsilon\bar{w}^\varepsilon)- E_\varepsilon f_0(T_0(s)M_\varepsilon\bar{w}^\varepsilon)\|_{X_\varepsilon^\alpha}\,ds\\
&\leq C(|\!|\!|s_\ast^\varepsilon |\!|\!|+\rho(\varepsilon))+ C\tau(\varepsilon).
\end{align*}
Hence
$$
\|\tilde{T}_\varepsilon(t)\bar{w}^\varepsilon-E_\varepsilon T_0(t)M_\varepsilon\bar{w}^\varepsilon\|_{X_\varepsilon^\alpha}\leq C(\tau(\varepsilon)+\rho(\varepsilon))+C\int_0^t\|\tilde{T}_\varepsilon(s)\bar{w}^\varepsilon-E_\varepsilon T_0(s)M_\varepsilon\bar{w}^\varepsilon\|_{X_\varepsilon^\alpha}\,ds.
$$
By Gronwall's inequality and then taking $t=1$ we obtain \eqref{estimate_nonlinear1}.

Now, to prove \eqref{estimate_nonlinear2}, note that from variation of constants formula and the Gronwall's inequality, we have $T_0(\cdot)$ a Lipschitz map in $X_0$, thus
\begin{align*}
\|E_\varepsilon T_0 M_\varepsilon Q_\varepsilon w^\varepsilon-E_\varepsilon T_0 w^0\|_{X_\varepsilon^\alpha}&\leq C\|T_0 M_\varepsilon Q_\varepsilon w^\varepsilon- T_0 w^0\|_{X_0}\\
&\leq C\|M_\varepsilon Q_\varepsilon w^\varepsilon-w^0\|_{X_0}\\
&=C\|M_\varepsilon Q_\varepsilon w^\varepsilon-M_\varepsilon E_\varepsilon w^0\|_{X_0}\\
&\leq C\|Q_\varepsilon w^\varepsilon-E_\varepsilon w^0\|_{X_\varepsilon^\alpha}.
\end{align*}
\cqd
\end{proof}

As a consequence of Theorem \ref{continuity_nonlinear_semigroup} we have the following result.

\begin{theorem}\label{estimate_attractor} 
Let $\mathcal{A}_\varepsilon$ be the attractor for \eqref{perturbed_problem} and $\mathcal{A}_0$ the attractor of the \eqref{unperturbed_problem}. Then there is a positive constant $C$ independent of $\varepsilon$ such that
$$
\tn{d}_\varepsilon(\mathcal{A}_\varepsilon,E_\varepsilon\mathcal{A}_0)\leq C(\tau(\varepsilon)+\rho(\varepsilon))+\tn{d}_\varepsilon(Q_\varepsilon\mathcal{A}_\varepsilon,E_\varepsilon \mathcal{A}_0).
$$
\end{theorem}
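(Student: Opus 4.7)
The plan is to exploit the fact, established in Theorem \ref{estimate_invariant_manifold}, that the attractor $\mathcal{A}_\varepsilon$ is contained in the invariant manifold $\mathcal{M}_\varepsilon$, which is a graph over $Y_\varepsilon$ whose ``height'' is uniformly bounded by $C(\tau(\varepsilon)+\rho(\varepsilon))$. The theorem is then essentially a triangle inequality combined with this graph structure; there is no hidden analytic difficulty here beyond what has already been packaged into $|\!|\!|s_*^\varepsilon|\!|\!|$.

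First I would unpack both Hausdorff semidistances. Fix $u^\varepsilon\in\mathcal{A}_\varepsilon$ and $u^0\in\mathcal{A}_0$. Since $\mathcal{A}_\varepsilon\subset\mathcal{M}_\varepsilon$, we have the identity $u^\varepsilon=Q_\varepsilon u^\varepsilon+s_*^\varepsilon(Q_\varepsilon u^\varepsilon)$, so by the triangle inequality
\[
\|u^\varepsilon-E_\varepsilon u^0\|_{X_\varepsilon^\alpha}\le \|Q_\varepsilon u^\varepsilon-E_\varepsilon u^0\|_{X_\varepsilon^\alpha}+\|s_*^\varepsilon(Q_\varepsilon u^\varepsilon)\|_{X_\varepsilon^\alpha}\le \|Q_\varepsilon u^\varepsilon-E_\varepsilon u^0\|_{X_\varepsilon^\alpha}+C(\tau(\varepsilon)+\rho(\varepsilon)),
\]
using \eqref{estimate_invariant_manifold_reaction_diffusion}.

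Next I would take the appropriate infima and suprema. Taking $\inf_{u^0\in\mathcal{A}_0}$ on both sides and then $\sup_{u^\varepsilon\in\mathcal{A}_\varepsilon}$, the map $u^\varepsilon\mapsto Q_\varepsilon u^\varepsilon$ sends $\mathcal{A}_\varepsilon$ onto $Q_\varepsilon\mathcal{A}_\varepsilon$, hence
\[
\tn{dist}_\varepsilon(\mathcal{A}_\varepsilon,E_\varepsilon\mathcal{A}_0)\le \tn{dist}_\varepsilon(Q_\varepsilon\mathcal{A}_\varepsilon,E_\varepsilon\mathcal{A}_0)+C(\tau(\varepsilon)+\rho(\varepsilon)).
\]
Symmetrically, fixing $u^0\in\mathcal{A}_0$ and taking $\inf_{u^\varepsilon\in\mathcal{A}_\varepsilon}$ followed by $\sup_{u^0\in\mathcal{A}_0}$ in the same inequality yields
\[
\tn{dist}_\varepsilon(E_\varepsilon\mathcal{A}_0,\mathcal{A}_\varepsilon)\le \tn{dist}_\varepsilon(E_\varepsilon\mathcal{A}_0,Q_\varepsilon\mathcal{A}_\varepsilon)+C(\tau(\varepsilon)+\rho(\varepsilon)).
\]
Combining the two bounds through the definition of the symmetric Hausdorff metric $\tn{d}_\varepsilon$ gives the claim.

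The only subtle point I anticipate is making sure that the infimum/supremum manipulation in the second inequality is correctly justified, since there $u^\varepsilon$ is the variable being optimized; but it is immediate because the same upper bound holds for every $u^\varepsilon\in\mathcal{A}_\varepsilon$, so one may minimize over $u^\varepsilon$ (equivalently over its projection $Q_\varepsilon u^\varepsilon\in Q_\varepsilon\mathcal{A}_\varepsilon$) before maximizing over $u^0$. There is no obstacle beyond this bookkeeping; the real content of the theorem is the previously established uniform bound on $|\!|\!|s_*^\varepsilon|\!|\!|$.
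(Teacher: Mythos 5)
Your proof is correct, but it takes a genuinely different and simpler route than the paper. The paper's argument uses the invariance of the attractors to write $u^0 = T_0 z^0$ and $u^\varepsilon = \tilde{T}_\varepsilon z^\varepsilon + s_*^\varepsilon(\tilde{T}_\varepsilon z^\varepsilon)$ for points $z^0\in\mathcal{A}_0$, $z^\varepsilon\in Q_\varepsilon\mathcal{A}_\varepsilon$, and then invokes both estimates of Theorem \ref{continuity_nonlinear_semigroup}: the convergence bound \eqref{estimate_nonlinear1} on $\|\tilde T_\varepsilon - E_\varepsilon T_0 M_\varepsilon\|$ and the Lipschitz bound \eqref{estimate_nonlinear2} on $E_\varepsilon T_0$. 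This produces $\|u^\varepsilon-E_\varepsilon u^0\| \le C(\tau+\rho) + C\|z^\varepsilon - E_\varepsilon z^0\|$, so in fact the paper's route yields a multiplicative constant $C$ in front of $\tn{d}_\varepsilon(Q_\varepsilon\mathcal{A}_\varepsilon,E_\varepsilon\mathcal{A}_0)$. Your argument bypasses the semigroup comparison entirely: since $\mathcal{A}_\varepsilon\subset\mathcal{M}_\varepsilon$ one has $u^\varepsilon = Q_\varepsilon u^\varepsilon + s_*^\varepsilon(Q_\varepsilon u^\varepsilon)$, and a single triangle inequality plus \eqref{estimate_invariant_manifold_reaction_diffusion} gives the result with the sharp coefficient $1$ in front of the projected Hausdorff distance, exactly as stated. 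The trade-off is negligible here — the paper needs Theorem \ref{continuity_nonlinear_semigroup} anyway for the next step (Proposition \ref{nonlinear_estimate_1}) — but for Theorem \ref{estimate_attractor} itself your observation that only the graph structure and the bound on $|\!|\!|s_*^\varepsilon|\!|\!|$ are required is correct and the cleaner way to see it. Your handling of the $\inf/\sup$ bookkeeping, using that $Q_\varepsilon$ maps $\mathcal{A}_\varepsilon$ onto $Q_\varepsilon\mathcal{A}_\varepsilon$, is also fine.
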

\begin{proof} Let $u^\varepsilon\in \mathcal{A}_\varepsilon$, and $u^0\in \mathcal{A}_0$. We can write $u^0=T_0z^0$, $z^0\in\mathcal{A}_0$ and $u^\varepsilon=v^\varepsilon+s_\ast^\varepsilon(v^\varepsilon)$, with $v^\varepsilon=\tilde{T}_\varepsilon z^\varepsilon$, $z^\varepsilon\in Q_\varepsilon\mathcal{A}_\varepsilon$. Thus, by \eqref{estimate_nonlinear1} and \eqref{estimate_nonlinear2}, we have
\begin{align*}
\|u^\varepsilon-E_\varepsilon u^0\|_{X_\varepsilon^\alpha} &=\|v^\varepsilon+s_\ast^\varepsilon(v^\varepsilon)- E_\varepsilon T_0 z^0\|_{X_\varepsilon^\alpha} \\
& \leq \|\tilde{T}_\varepsilon z^\varepsilon+s_\ast^\varepsilon(\tilde{T}_\varepsilon z^\varepsilon)-E_\varepsilon T_0 z^0\|_{X_\varepsilon^\alpha}\\
& \leq \|\tilde{T}_\varepsilon z^\varepsilon-E_\varepsilon T_0 M_\varepsilon z^\varepsilon\|_{X_\varepsilon^\alpha}+|\!|\!|s_\ast^\varepsilon |\!|\!| +\|E_\varepsilon T_0 M_\varepsilon z^\varepsilon-E_\varepsilon T_0 z^0\|_{X_\varepsilon^\alpha}\\
& \leq C(\tau(\varepsilon)+\rho(\varepsilon)) + C\|z^\varepsilon-E_\varepsilon z^0\|_{X_\varepsilon^\alpha}.
\end{align*}
\cqd
\end{proof}

Recall that we denoted $\{\varphi_i^0\}_{i=1}^n$ the eigenfunctions associated to the $n$ eigenvalues of $A_0$. Since $Y_\varepsilon$ and $X_0$ are $n$ dimensional, we can consider
$$
Y_\varepsilon=\tn{span}[Q_\varepsilon E_\varepsilon\varphi_1^0,\cdots,Q_\varepsilon E_\varepsilon\varphi_n^0]\quad\tn{and}\quad X_0=\tn{span}[\varphi_1^0,\cdots,\varphi_n^0]
$$ 
and define the isomorphisms $j_\varepsilon:Y_\varepsilon\to \R^n$ and $j_0:X_0\to \R^n$ by 
$$
\sum_{j=1}^n z_j^\varepsilon Q_\varepsilon E_\varepsilon \varphi_j^0\overset{j_\varepsilon}\longrightarrow (z_1^\varepsilon,\cdots,z_n^\varepsilon)\quad\tn{and}\quad \sum_{j=1}^n z_j^0\varphi_j^0\overset{j_0}\longrightarrow (z_1^0,\cdots,z_n^0),
$$
where in $\R^n$ we consider the following norm
\begin{equation}\label{norm_R_n}
\|z\|_{\R^n}=\Big(\sum_{j=1}^n z_i^2(\lambda_i^\varepsilon)^{2\alpha}\Big)^{\frac{1}{2}},\quad z=(z_1,\cdots,z_n)\in \R^n,
\end{equation}
where $\{\lambda_i^\varepsilon\}_{j=1}^n$ is the first $n$ eigenvalues of $A_\varepsilon$.

\begin{lemma}\label{estimate_isomorphism}
For $\bar{w}^\varepsilon\in Y_\varepsilon$ and $w^0\in X_0$ we have the following inequality
$$
\|j_\varepsilon \bar{w}^\varepsilon-j_0w^0\|_{\R^n}\leq C(\|\bar{w}^\varepsilon-E_\varepsilon w^0\|_{X_\varepsilon^\alpha}+\tau(\varepsilon)),
$$
where $C$ is a constant independent of $\varepsilon$.
\end{lemma}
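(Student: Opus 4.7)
The proof rests on a simple reduction to a single uniform operator bound. The first step is the identity
$$j_0 w^0 = j_\varepsilon\bigl(Q_\varepsilon E_\varepsilon w^0\bigr),$$
which comes straight from the definitions: writing $w^0=\sum_{j=1}^{n} z_j^0 \varphi_j^0$, linearity of $Q_\varepsilon E_\varepsilon$ gives $Q_\varepsilon E_\varepsilon w^0=\sum_{j=1}^n z_j^0 Q_\varepsilon E_\varepsilon\varphi_j^0$, and then both $j_0 w^0$ and $j_\varepsilon(Q_\varepsilon E_\varepsilon w^0)$ equal $(z_1^0,\dots,z_n^0)$. Consequently
$$j_\varepsilon\bar w^\varepsilon-j_0 w^0=j_\varepsilon\bigl(\bar w^\varepsilon-Q_\varepsilon E_\varepsilon w^0\bigr),$$
and by the triangle inequality together with Lemma \ref{projectiom_estimate}(ii),
$$\|\bar w^\varepsilon-Q_\varepsilon E_\varepsilon w^0\|_{X_\varepsilon^\alpha}\le\|\bar w^\varepsilon-E_\varepsilon w^0\|_{X_\varepsilon^\alpha}+\|(E_\varepsilon-Q_\varepsilon E_\varepsilon)w^0\|_{X_\varepsilon^\alpha}\le\|\bar w^\varepsilon-E_\varepsilon w^0\|_{X_\varepsilon^\alpha}+C\tau(\varepsilon).$$
The whole statement therefore reduces to the uniform operator bound $\|j_\varepsilon\|_{\LL(Y_\varepsilon,\R^n)}\le C$ with $C$ independent of $\varepsilon$.

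This uniform bound is the only non-trivial point, and I would prove it by contradiction. If it failed, there would exist sequences $\varepsilon_k\to 0$ and $u_k=\sum_{j=1}^n c_j^k Q_{\varepsilon_k}E_{\varepsilon_k}\varphi_j^0\in Y_{\varepsilon_k}$ with $\|j_{\varepsilon_k}u_k\|_{\R^n}=1$ but $\|u_k\|_{X_{\varepsilon_k}^\alpha}\to 0$. By Corollary \ref{gap_condition}(ii), the weights $(\lambda_j^{\varepsilon_k})^{2\alpha}$, $j=1,\dots,n$, lie in a compact subinterval of $(0,\infty)$, so the coefficient vectors $c^k=(c_1^k,\dots,c_n^k)$ are bounded in standard Euclidean norm; passing to a subsequence, $c^k\to c$ with $\sum_{j=1}^n c_j^2(\lambda_j^0)^{2\alpha}=1$ and in particular $c\neq 0$. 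Using Lemma \ref{projectiom_estimate}(ii) one has $Q_{\varepsilon_k}E_{\varepsilon_k}\varphi_j^0\overset{\tn{E}}\longrightarrow\varphi_j^0$ for each $j$, and combining this with convergence of the scalar coefficients yields
$$u_k\overset{\tn{E}}\longrightarrow v^0:=\sum_{j=1}^n c_j\varphi_j^0,$$
with $v^0\neq 0$ by linear independence of $\{\varphi_j^0\}_{j=1}^n$. The lower bound $C^{-1}\|v^0\|_{X_0}\le\|E_{\varepsilon_k} v^0\|_{X_{\varepsilon_k}^\alpha}$ from the properties of $E_\varepsilon$ then forces $\liminf_k \|u_k\|_{X_{\varepsilon_k}^\alpha}\ge C^{-1}\|v^0\|_{X_0}>0$, contradicting $\|u_k\|_{X_{\varepsilon_k}^\alpha}\to 0$.

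The main obstacle is precisely this uniform estimate on $j_\varepsilon$: the $\R^n$ norm itself depends on $\varepsilon$ through the weights $(\lambda_j^\varepsilon)^{2\alpha}$, so one must use both the convergence $\lambda_j^\varepsilon\to\lambda_j^0$ from Corollary \ref{gap_condition}(ii) and the fact that $\{Q_\varepsilon E_\varepsilon\varphi_j^0\}_{j=1}^n$, although constructed via the projection $Q_\varepsilon$, still E-converges to the basis $\{\varphi_j^0\}$ of $X_0$ with the rate furnished by Lemma \ref{projectiom_estimate}(ii). Everything else in the argument is either a definitional identity or a single application of that same projection estimate, and combining the two ingredients gives the claimed rate $\tau(\varepsilon)$.
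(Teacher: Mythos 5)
Your proof is correct, but it follows a genuinely different route from the paper. The paper writes
$\bar w^\varepsilon - E_\varepsilon w^0 = (Q_\varepsilon E_\varepsilon - E_\varepsilon)\sum_j z_j^\varepsilon\varphi_j^0 + E_\varepsilon\sum_j(z_j^\varepsilon - z_j^0)\varphi_j^0$
and then hits both sides with $M_\varepsilon$; using $M_\varepsilon E_\varepsilon = I_{X_0}$ this isolates $\sum_j(z_j^\varepsilon - z_j^0)\varphi_j^0$ in $X_0$, and the bound follows directly from the uniform boundedness of $M_\varepsilon$ and Lemma~\ref{projectiom_estimate}(ii). You instead exploit the identity $j_0 w^0 = j_\varepsilon(Q_\varepsilon E_\varepsilon w^0)$ to rewrite the left side as $j_\varepsilon(\bar w^\varepsilon - Q_\varepsilon E_\varepsilon w^0)$, bound the argument via the projection estimate, and then reduce the whole lemma to a uniform operator bound on $j_\varepsilon$, which you establish by a compactness/contradiction argument. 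Both proofs are legitimate, and both implicitly require some a priori bound on $w^0$ (and the coefficients $z_j^\varepsilon$), a hypothesis the lemma statement leaves tacit but which holds in all applications since the attractors are uniformly bounded; and both ultimately use the eigenvalue convergence from Corollary~\ref{gap_condition}(ii) to cope with the $\varepsilon$-dependence of the weighted $\R^n$-norm. The trade-off: the paper's approach is more direct and stays quantitative throughout (which fits the spirit of a rate-of-convergence paper), while yours cleanly isolates a single analytic fact --- uniform boundedness of $j_\varepsilon$ --- at the cost of a soft, non-constructive argument. One small remark: that uniform bound can in fact be obtained directly rather than by contradiction, by applying $M_\varepsilon$ to $\bar w^\varepsilon = \sum_j z_j^\varepsilon Q_\varepsilon E_\varepsilon\varphi_j^0$, absorbing the $O(\tau(\varepsilon))$ error term for $\varepsilon$ small, and invoking equivalence of norms on the finite-dimensional $X_0$; that would make your proof fully quantitative and bring it even closer in spirit to the paper's.
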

\begin{proof}
In fact,
\begin{align*}
\bar{w}^\varepsilon-E_\varepsilon w^0 &=\sum_{j=1}^n z_j^\varepsilon Q_\varepsilon E_\varepsilon\varphi_j^0-E_\varepsilon \sum_{j=1}^n z_j^0 \varphi_j^0 \\
&=(Q_\varepsilon E_\varepsilon-E_\varepsilon)\sum_{j=1}^n z_j^\varepsilon\varphi_j^0+E_\varepsilon\sum_{j=1}^n (z_j^\varepsilon-z_j^0)\varphi_j^0,
\end{align*}
since $M_\varepsilon E_\varepsilon=I_{X_0}$, we obtain
$$
\sum_{j=1}^n (z_j^\varepsilon-z_j^0)\varphi_j^0=M_\varepsilon(\bar{w}^\varepsilon-E_\varepsilon w^0)-M_\varepsilon (Q_\varepsilon E_\varepsilon-E_\varepsilon)\sum_{j=1}^n z_j^\varepsilon\varphi_j^0,
$$
which implies
$$
\|j_\varepsilon \bar{w}^\varepsilon-j_0w^0\|_{\R^n}\leq C(\|\bar{w}^\varepsilon-E_\varepsilon w^0\|_{X_\varepsilon^\alpha}+\|Q_\varepsilon E_\varepsilon-E_\varepsilon\|_{\LL(X_0,X_\varepsilon^\alpha)})
$$
The result follows by Lemma \ref{spectral_projection}.
\cqd
\end{proof}

We now restrict our attention to dynamical systems $\tilde{T}_\varepsilon$ and $T_0$ that acts in $Y_\varepsilon$ and $X_0$ respectively. Thus we consider the systems generated by following ordinary differential equations,
$$
\begin{cases}
\dot{v}^\varepsilon(t)+A_\varepsilon^+ v^\varepsilon(t)=H_\varepsilon(v^\varepsilon(t)),\quad t\in\R,\\ v^\varepsilon(0)=\bar{w}^\varepsilon\in Y_\varepsilon,\quad\varepsilon\in(0,\varepsilon_0],
\end{cases}
$$
$$
\begin{cases}
\dot{u}^0(t)+A_0 u^0(t)=f_0(u^0(t)),\quad t\in\R,\\u^0(0)=M_\varepsilon\bar{w}^\varepsilon\in X_0.
\end{cases}
$$
Let $z_0^0,z_0^\varepsilon\in\R^n$ such that $\bar{w}^\varepsilon=j_\varepsilon^{-1}z_0^\varepsilon$ and $M_\varepsilon\bar{w}^\varepsilon=j_0^{-1}z_0^0$, then $j_\varepsilon(v^\varepsilon(t))$ and $j_0(u^0(t))$ satisfy the following equation in $\R^n$, 
\begin{equation}\label{ODE_1}
\begin{cases}
\dot{z}^\varepsilon(t)+j_\varepsilon A_\varepsilon^+j_\varepsilon^{-1} z^\varepsilon(t)=j_\varepsilon H_\varepsilon(j_\varepsilon^{-1} z^\varepsilon(t)),\quad t\in\R,\\ z^\varepsilon(0)=z_0^\varepsilon,\quad\varepsilon\in(0,\varepsilon_0],
\end{cases}
\end{equation}
\begin{equation}\label{ODE_2}
\begin{cases}
\dot{z}^0(t)+j_0 A_0 j_0^{-1}z^0(t)=j_0f_0(j_0^{-1}z^0(t)),\quad t\in\R,\\z^0(0)=z_0^0.
\end{cases}
\end{equation}
Since we assume that the limiting problem \eqref{unperturbed_problem} generates a Morse-Smale semigroup in $X_0$,  the perturbed problem \eqref{perturbed_problem} generate a Morse-Smale semigroup in $X_\varepsilon^\alpha$,
thus if we denote $S_0$ and $S_\varepsilon$ the time one map of the systems \eqref{ODE_1} and \eqref{ODE_2},  that is, let $S_\varepsilon(\cdot)$ and $S_0(\cdot)$ be the nonlinear semigroup in $\R^n$ given by the solutions of \eqref{ODE_1} and \eqref{ODE_2} respectively, define $S_\varepsilon=S_\varepsilon(1)$ and $S_0=S_0(1)$. We have $S_0$ and $S_\varepsilon$ Morse-Smale semigroups in $\R^n$ and we denote $\bar{\mathcal{A}}_0$ and $\bar{\mathcal{A}}_\varepsilon$ its attractors respectively.

Next we estimate the convergence of the nonlinear semigroups acting in $\R^n$.

\begin{proposition}\label{nonlinear_estimate_1}
There is a constant $C$ independent of $\varepsilon$ such that 
$$
\|S_\varepsilon-S_0\|_{L^\infty(\R^n)}\leq C(\tau(\varepsilon)+\rho(\varepsilon)).
$$
\end{proposition}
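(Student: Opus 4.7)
The plan is to unfold the conjugation by the isomorphisms $j_\varepsilon, j_0$ and chain Theorem \ref{continuity_nonlinear_semigroup} with Lemmas \ref{estimate_isomorphism} and \ref{projectiom_estimate}. Fix $z \in \R^n$ in a bounded region containing the dynamically relevant sets (this is the correct interpretation of $L^\infty(\R^n)$ in context, since several of the previous estimates tacitly carry a factor $\|z\|_{\R^n}$). By definition $S_\varepsilon z = j_\varepsilon \tilde T_\varepsilon j_\varepsilon^{-1} z$ and $S_0 z = j_0 T_0 j_0^{-1} z$. Setting $\bar w^\varepsilon := \tilde T_\varepsilon j_\varepsilon^{-1} z \in Y_\varepsilon$ and $w^0 := T_0 j_0^{-1} z \in X_0$, Lemma \ref{estimate_isomorphism} yields
$$
\|S_\varepsilon z - S_0 z\|_{\R^n} \leq C\bigl(\|\tilde T_\varepsilon j_\varepsilon^{-1} z - E_\varepsilon T_0 j_0^{-1} z\|_{X_\varepsilon^\alpha} + \tau(\varepsilon)\bigr),
$$
so the task reduces to estimating the $X_\varepsilon^\alpha$-norm on the right.

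I would split that norm by inserting the intermediate point $E_\varepsilon T_0 M_\varepsilon j_\varepsilon^{-1} z$. The first piece $\|\tilde T_\varepsilon j_\varepsilon^{-1} z - E_\varepsilon T_0 M_\varepsilon j_\varepsilon^{-1} z\|_{X_\varepsilon^\alpha}$ is at most $C(\tau(\varepsilon) + \rho(\varepsilon))$ by \eqref{estimate_nonlinear1}. For the second piece $\|E_\varepsilon T_0 M_\varepsilon j_\varepsilon^{-1} z - E_\varepsilon T_0 j_0^{-1} z\|_{X_\varepsilon^\alpha}$, I would use the uniform boundedness of $E_\varepsilon$ and the Lipschitz continuity of $T_0$ on bounded subsets of $X_0$ (via the variation of constants formula and Gronwall, exactly as in the proof of Theorem \ref{continuity_nonlinear_semigroup}) to reduce it to $C\|M_\varepsilon j_\varepsilon^{-1} z - j_0^{-1} z\|_{X_0}$. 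Writing $j_\varepsilon^{-1} z = \sum_{j=1}^n z_j Q_\varepsilon E_\varepsilon \varphi_j^0$ and $j_0^{-1} z = \sum_{j=1}^n z_j \varphi_j^0$, and using $M_\varepsilon E_\varepsilon = I_{X_0}$, we have
$$
M_\varepsilon j_\varepsilon^{-1} z - j_0^{-1} z = \sum_{j=1}^n z_j M_\varepsilon (Q_\varepsilon E_\varepsilon - E_\varepsilon)\varphi_j^0,
$$
so Lemma \ref{projectiom_estimate}(ii) together with the uniform bound on $M_\varepsilon$ give $\|M_\varepsilon j_\varepsilon^{-1} z - j_0^{-1} z\|_{X_0} \leq C\tau(\varepsilon)\|z\|_{\R^n}$, which on our bounded region is $O(\tau(\varepsilon))$. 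Collecting all estimates produces the claimed bound $C(\tau(\varepsilon) + \rho(\varepsilon))$, uniformly in $z$ over the region.

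The main obstacle is essentially bookkeeping rather than analysis: one must ensure that the Lipschitz constant of $T_0$, the operator bounds on $E_\varepsilon$, $M_\varepsilon$, $Q_\varepsilon$, and the implicit constants in Theorem \ref{continuity_nonlinear_semigroup} are all independent of $\varepsilon$, which follows from the uniform hypotheses already imposed on $f_\varepsilon$ and on $A_\varepsilon$. The one conceptual point worth flagging is that Lemma \ref{estimate_isomorphism} and the linear semigroup estimates in Lemma \ref{Linear_estimate} provide uniform bounds only on bounded subsets, which is why $\|\cdot\|_{L^\infty(\R^n)}$ has to be understood on a bounded region; no further analytic difficulty appears beyond what has already been set up.
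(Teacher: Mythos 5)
Your proof is correct and follows the same overall structure as the paper's: convert the $\R^n$ distance to an $X_\varepsilon^\alpha$ distance via Lemma \ref{estimate_isomorphism} and then invoke Theorem \ref{continuity_nonlinear_semigroup}. There is one place where you are actually more careful than the paper. The paper's proof takes $z^0(0)=z=z^\varepsilon(0)$ and then simply writes ``$j_0^{-1}(z)=u^0(0)=M_\varepsilon\bar w^\varepsilon$,'' tacitly identifying $j_0^{-1}(z)$ with $M_\varepsilon j_\varepsilon^{-1}(z)$. That identification is only approximate: using $j_\varepsilon^{-1}z=\sum_j z_j Q_\varepsilon E_\varepsilon\varphi_j^0$, $j_0^{-1}z=\sum_j z_j\varphi_j^0$, and $M_\varepsilon E_\varepsilon=I_{X_0}$ one gets $M_\varepsilon j_\varepsilon^{-1}z - j_0^{-1}z = \sum_j z_j M_\varepsilon(Q_\varepsilon E_\varepsilon-E_\varepsilon)\varphi_j^0$, which is $O(\tau(\varepsilon))$ but not zero. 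You split off exactly this discrepancy as a second term $\|E_\varepsilon T_0 M_\varepsilon j_\varepsilon^{-1}z - E_\varepsilon T_0 j_0^{-1}z\|_{X_\varepsilon^\alpha}$, control it via the Lipschitz property of $T_0$ and Lemma \ref{projectiom_estimate}(ii), and absorb it into the final $C\tau(\varepsilon)$. Your observation that the $L^\infty(\R^n)$ norm must be read on a bounded region is also accurate: the proof of Lemma \ref{estimate_isomorphism} implicitly carries a factor $\|\sum_j z_j^\varepsilon\varphi_j^0\|_{X_0}$ in the $\tau(\varepsilon)$ term, and the Gronwall/Lipschitz constants for $T_0$ are uniform only on bounded sets. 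Since the attractors are uniformly bounded, this is harmless but worth flagging, as you did. In short, same strategy, slightly tighter bookkeeping.
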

\begin{proof} We have
\begin{align*}
\|S_\varepsilon-S_0\|_{L^\infty(\R^n)}&=\sup_{z\in\R^n} \|z^\varepsilon(1)-z^0(1)\|_{\R^n}=\sup_{z\in\R^n}\|j_\varepsilon(v^\varepsilon(1))-j_0(u^0(1))\|_{\R^n},
\end{align*}
where $z^0(0)=z=z^\varepsilon(0)$, that is $j_0^{-1}(z)=u^0(0)=M_\varepsilon \bar{w}^\varepsilon$ and $j_\varepsilon^{-1}(z)=v^\varepsilon(0)=\bar{w}^\varepsilon$. By  Lemma \ref{estimate_isomorphism}, we have
\begin{align*}
\|j_\varepsilon(v^\varepsilon(1))-j_0(u^0(1))\|_{\R^n}&\leq C(\|v^\varepsilon(1)-E_\varepsilon u^0(1)\|_{X_\varepsilon^\alpha}+\tau(\varepsilon))\\
&=C(\|\tilde{T}_\varepsilon w^\varepsilon-E_\varepsilon T_0 M_\varepsilon\bar{w}^\varepsilon\|_{X_\varepsilon^\alpha}+\tau(\varepsilon)).
\end{align*}
The result now follows by Theorem \ref{continuity_nonlinear_semigroup}.
\cqd
\end{proof}

The next result relates the projected attractors with the attractors in $\R^n$.

\begin{theorem}\label{estimate_attractor_projected}
There is a constant $C$ independent of $\varepsilon$ such that
$$
\tn{d}_\varepsilon(Q_\varepsilon\mathcal{A}_\varepsilon,E_\varepsilon \mathcal{A}_0)\leq C(\tn{dist}_{\R^n}(\bar{\mathcal{A}}_\varepsilon,\bar{\mathcal{A}}_0)+\tau(\varepsilon)),
$$
where
$\tn{dist}_{\R^n}(\bar{\mathcal{A}}_\varepsilon,\bar{\mathcal{A}}_0)=\sup_{u\in \bar{\mathcal{A}}_\varepsilon}\inf_{v\in \bar{\mathcal{A}}_0}\|u-v\|_{\R^n}$ denotes the Hausdorff semidistance in $\R^n$ with respect to \eqref{norm_R_n}.
\end{theorem}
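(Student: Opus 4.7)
The plan is to translate the Hausdorff semidistance in $X_\varepsilon^\alpha$ into one in $\R^n$ via the isomorphisms $j_\varepsilon$ and $j_0$. First I would observe that since $\mathcal{A}_\varepsilon$ lies on the invariant manifold $\mathcal{M}_\varepsilon$ (Theorem \ref{estimate_invariant_manifold}) and the flow there is $u^\varepsilon(t) = v^\varepsilon(t) + s_\ast^\varepsilon(v^\varepsilon(t))$ with $v^\varepsilon$ obeying the reduced ODE, the set $Q_\varepsilon\mathcal{A}_\varepsilon$ is precisely the attractor of $\tilde{T}_\varepsilon$ in $Y_\varepsilon$. Conjugating by the isomorphisms then identifies
\[
j_\varepsilon(Q_\varepsilon\mathcal{A}_\varepsilon) = \bar{\mathcal{A}}_\varepsilon, \qquad j_0(\mathcal{A}_0) = \bar{\mathcal{A}}_0,
\]
so the task reduces to relating $\|\bar{w}^\varepsilon - E_\varepsilon w^0\|_{X_\varepsilon^\alpha}$ to $\|j_\varepsilon\bar{w}^\varepsilon - j_0 w^0\|_{\R^n}$ for $\bar{w}^\varepsilon \in Y_\varepsilon$, $w^0 \in X_0$.

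The heart of the proof is a reverse form of Lemma \ref{estimate_isomorphism}: I claim
\[
\|\bar{w}^\varepsilon - E_\varepsilon w^0\|_{X_\varepsilon^\alpha} \leq C\bigl(\|j_\varepsilon\bar{w}^\varepsilon - j_0 w^0\|_{\R^n} + \tau(\varepsilon)\|w^0\|_{X_0}\bigr).
\]
Writing $\bar{w}^\varepsilon = \sum_{j=1}^n z_j^\varepsilon Q_\varepsilon E_\varepsilon\varphi_j^0$ and $w^0 = \sum_{j=1}^n z_j^0\varphi_j^0$, I would split
\[
\bar{w}^\varepsilon - E_\varepsilon w^0 = \sum_{j=1}^n (z_j^\varepsilon - z_j^0) Q_\varepsilon E_\varepsilon\varphi_j^0 + \sum_{j=1}^n z_j^0 (Q_\varepsilon E_\varepsilon - E_\varepsilon)\varphi_j^0.
\]
The first sum is controlled by $\|j_\varepsilon\bar{w}^\varepsilon - j_0 w^0\|_{\R^n}$, using the uniform bound on $\|Q_\varepsilon E_\varepsilon\varphi_j^0\|_{X_\varepsilon^\alpha}$ and the fact that the weighted norm \eqref{norm_R_n} is uniformly equivalent to the Euclidean norm on $\R^n$ (since $\lambda_j^\varepsilon \to \lambda_j^0$ for $j = 1,\dots,n$ by Corollary \ref{gap_condition}). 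The second sum contributes at most $C\tau(\varepsilon)\|w^0\|_{X_0}$ by Lemma \ref{projectiom_estimate}(ii).

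To finish, given $w^\varepsilon \in Q_\varepsilon\mathcal{A}_\varepsilon$, I would select an $\varepsilon$-nearly-optimal $w^0 \in \mathcal{A}_0$ with $\|j_\varepsilon w^\varepsilon - j_0 w^0\|_{\R^n}$ close to $\tn{dist}_{\R^n}(\bar{\mathcal{A}}_\varepsilon,\bar{\mathcal{A}}_0)$; applying the estimate above and using the uniform boundedness of $\bar{\mathcal{A}}_0$ to absorb $\tau(\varepsilon)\|w^0\|_{X_0}$ into $C\tau(\varepsilon)$ yields one half of the Hausdorff symmetric distance. The reverse inequality $\tn{dist}_\varepsilon(E_\varepsilon\mathcal{A}_0, Q_\varepsilon\mathcal{A}_\varepsilon)$ is handled by the symmetric selection (from $w^0 \in \mathcal{A}_0$ pick $w^\varepsilon \in Q_\varepsilon\mathcal{A}_\varepsilon$), relying on the fact that in the present Morse--Smale setting both semidistances in $\R^n$ are of the same order.

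The step I expect to be the main obstacle is the second decomposition: it is precisely the cross-term $\sum_j z_j^0 (Q_\varepsilon E_\varepsilon - E_\varepsilon)\varphi_j^0$ that is responsible for the extra $\tau(\varepsilon)$ in the statement. One needs the estimate on $\|Q_\varepsilon E_\varepsilon - E_\varepsilon\|_{\LL(X_0,X_\varepsilon^\alpha)}$ from Lemma \ref{projectiom_estimate}(ii) to be sharp (no loss), and one needs the weighted $\R^n$-norm to be genuinely uniformly equivalent to the Euclidean one on the finitely many leading modes; everything else is basis-change bookkeeping.
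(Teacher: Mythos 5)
Your proposal is correct and follows essentially the same approach as the paper: write $Q_\varepsilon w^\varepsilon$ and $w^0$ in the coordinates supplied by $j_\varepsilon$ and $j_0$, split the difference into a term controlled by $\|z^\varepsilon - z^0\|_{\R^n}$ and a cross term proportional to $\|Q_\varepsilon E_\varepsilon - E_\varepsilon\|_{\LL(X_0,X_\varepsilon^\alpha)}\leq C\tau(\varepsilon)$ (Lemma \ref{projectiom_estimate}(ii)), then use the uniform bound on the attractor coordinates to absorb the constant. The only cosmetic difference is which coefficient vector ($z^0$ versus $z^\varepsilon$) you pair with the factor $Q_\varepsilon E_\varepsilon - E_\varepsilon$ and whether the coordinate-difference term carries $Q_\varepsilon E_\varepsilon\varphi_j^0$ or $E_\varepsilon\varphi_j^0$; both groupings lead to the same estimate.
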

\begin{proof}
Let $w^0\in\mathcal{A}_0$ and $w^\varepsilon\in \mathcal{A}_\varepsilon$, we can write
$$
w^0=j_0^{-1}(z^0)=\sum_{j=1}^n z_j^0\varphi_j^0,\quad\tn{and}\quad Q_\varepsilon w^\varepsilon=j_\varepsilon^{-1}(z^\varepsilon)=\sum_{j=1}^n z_j^\varepsilon\psi_j^\varepsilon,
$$ 
where $\psi_j^\varepsilon=Q_\varepsilon E_\varepsilon \varphi_j^0$, $j=1,\dots,n$, $z^0\in \bar{\mathcal{A}}_0$ and $z^\varepsilon\in \bar{\mathcal{A}}_\varepsilon$. Thus
\begin{align*}
\|Q_\varepsilon w^\varepsilon-E_\varepsilon w^0\|_{X_\varepsilon^\alpha}&=\|j_\varepsilon^{-1}(z^\varepsilon)-E_\varepsilon j_0^{-1}(z^0)\|_{X_\varepsilon^\alpha}=\|\sum_{j=1}^n z_j^\varepsilon\psi_j^\varepsilon-E_\varepsilon \sum_{j=1}^n z_j^0\varphi_j^0\|_{X_\varepsilon^\alpha}\\
&\leq \|\sum_{j=1}^n z_j^\varepsilon(\psi_j^\varepsilon-E_\varepsilon\varphi_j^0)\|_{X_\varepsilon^\alpha}+\|\sum_{j=1}^n (z_j^\varepsilon-z_j^0)E_\varepsilon\varphi_j^0\|_{X_\varepsilon^\alpha}.
\end{align*}
But 
\begin{align*}
\|\sum_{j=1}^n z_j^\varepsilon(\psi_j^\varepsilon-E_\varepsilon\varphi_j^0)\|_{X_\varepsilon^\alpha}=&\|\sum_{j=1}^n z_j^\varepsilon(Q_\varepsilon E_\varepsilon \varphi_j^0-E_\varepsilon\varphi_j^0)\|_{X_\varepsilon^\alpha}\\
&\leq \sup_{1\leq j \leq n}|z_j^\varepsilon| \|(Q_\varepsilon E_\varepsilon -E_\varepsilon)\sum_{j=1}^n\varphi_j^0\|_{X_\varepsilon^\alpha}\\
&\leq C\|Q_\varepsilon E_\varepsilon-E_\varepsilon\|_{\LL(X_0,X_\varepsilon^\alpha)}
\end{align*}
and
\begin{align*}
\|\sum_{j=1}^n (z_j^\varepsilon-z_j^0)E_\varepsilon\varphi_j^0\|_{X_\varepsilon^\alpha}^2&=\sum_{j=1}^n(\lambda_j^\varepsilon)^{2\alpha}\|Q_j^\varepsilon\sum_{i=1}^n(z_i^\varepsilon-z_i^0)E_\varepsilon\varphi_i^0\|^2_{X_\varepsilon}\\
&\leq \sum_{j=1}^n(\lambda_j^\varepsilon)^{2\alpha}(z_j^\varepsilon-z_j^0)^2\|Q_j^\varepsilon E_\varepsilon\varphi_j^0\|^2_{X_\varepsilon}\\
&\leq C \sum_{j=1}^n (\lambda_j^\varepsilon)^{2\alpha}(z_j^\varepsilon-z_j^0)^2\\
&=C\|z^\varepsilon-z^0\|^2_{\R^n}.
\end{align*}
By Lemma \ref{projectiom_estimate} the result follows .  
\cqd
\end{proof} 

It follows from Theorems \ref{estimate_attractor} and \ref{estimate_attractor_projected} the following estimate
\begin{equation}\label{estimate_attractor_projected2}
\tn{d}_\varepsilon(\mathcal{A}_\varepsilon,E_\varepsilon\mathcal{A}_0)\leq C(\tau(\varepsilon)+\rho(\varepsilon)+\tn{dist}_{\R^n}(\bar{\mathcal{A}}_\varepsilon,\bar{\mathcal{A}}_0)).
\end{equation}
Therefore we need to estimate  $\tn{dist}_{\R^n}(\bar{\mathcal{A}}_\varepsilon,\bar{\mathcal{A}}_0)$. To this end we use a result proved in \cite{Santamaria2014}, where an aplication of the Shadowing Theory in dynamical system was used to obtain a rate of convergence for attractors for semigroups in finite dimension.

\section{Shadowing Theory and Rate of Convergence}\label{Shadowing Theory and Rate of Convergence}

In this section we prove the main result of this paper but before, we make a brief overview of the some results presented in \cite{Santamaria2014}. 

Let $T:\R^n\to \R^n$ be a continuous function. Recall that the discrete dynamical system generated by $T$ is defined by $T^0=I_{\R^n}$ and, for $k\in\N$, $T^k=T\circ\dots\circ T$ is the $k$th iterate of $T$. The notions of Morse-Smale systems, hyperbolic fixed points, stable and unstable manifolds for a function $T$ are similar to the continuous case (see \cite{Hale1988}).  

\begin{definition}
A trajectory (or global solution) of the discrete dynamical system generated by $T$ is a sequence $\{x_n\}_{n\in\Z}\subset \R^n$, such that, $x_{n+1}=T(x_n)$, for all $n\in\Z$. 
\end{definition}

\begin{definition}
We say that a sequence $\{x_n\}_{n\in\Z}$ is a $\delta$-pseudo-trajectory of $T$ if 
$$
\|Tx_x-x_{k+1}\|_{\R^n}\leq\delta,\quad\tn{for all }n\in\Z. 
$$
\end{definition}

\begin{definition}
We say that a point $x\in \R^n$ $\varepsilon$-shadows a $\delta$-pseudo-trajectory $\{x_k\}$ on $U\subset \R^n$ if the inequality
$$
\|T^kx-x_k\|_{\R^n}\leq \varepsilon,\quad k\in\Z,
$$
hold.
\end{definition}

\begin{definition}
The map $T$ has the Lipschitz Shadowing Property (LpSP) on $U\subset \R^n$, if there are constants $L, \delta_0>0$ such that for any $0< \delta\leq \delta_0 $, any $\delta$-pseudo-trajectory of $T$ in $U$ is $(L\delta)$-shadowed by a trajectory of $T$ in $\R^n$, that is, for any sequence $\{x_k\}_k\subset U\subset \R^n$ with
$$
\|Tx_k-x_{k+1}\|_{\R^n}\leq \delta\leq \delta_0,\quad k\in\Z,
$$
there is a point $x\in X$ such that the inequality
$$
\|T^kx-x_k\|_{\R^n}\leq L\delta,\quad k\in\Z,
$$
hold.
\end{definition}

Let $T:\R^m\to \R^m$ be a Morse Smale function which has an attractor $\mathcal{A}$. Since $\mathcal{A}$ is compact and has all dynamic of the system, we can restrict our attention on a neigborhood  $\mathcal{N}(\mathcal{A})$ of $\mathcal{A}$, thus we consider the space $C^1(\mathcal{N}(\mathcal{A}),\R^m)$ with the $C^1$-topology.

The next result can be found in \cite{Santamaria2014}. It describes an application of Shadowing Theory to rate of convergence of attractors.

\begin{proposition}\label{prop_estimate_LPSP}
Let $T_1, T_2:X\to X$ be maps which has a global attractors $\mathcal{A}_1,\mathcal{A}_2$. Assume that $\mathcal{A}_1,\mathcal{A}_2\subset \mathcal{U}\subset X$, that $T_1,T_2$ have both the LpSP on $\mathcal{U},$ with parameters $L,\delta_0$ and $\|T_1-T_2\|_{\mathcal{L}^\infty(\mathcal{U},X)}\leq \delta$. Then we have 
$$
\tn{dist}_H(\mathcal{A}_1,\mathcal{A}_2)\leq \|T_1-T_2\|_{\LL^\infty(\mathcal{U},X)}.
$$
\end{proposition}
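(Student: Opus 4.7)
The plan is to use the Lipschitz Shadowing Property of $T_2$ on the orbits of $T_1$ that lie in $\mathcal{A}_1$, and then invoke the characterization of the global attractor as the union of all bounded global trajectories.

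First, I would fix an arbitrary $a_1\in\mathcal{A}_1$. Because $\mathcal{A}_1$ is an invariant set for the discrete system generated by $T_1$, there is a full trajectory $\{x_k\}_{k\in\Z}\subset\mathcal{A}_1$ with $x_0=a_1$ and $T_1 x_k=x_{k+1}$ for every $k\in\Z$. Since $\mathcal{A}_1\subset\mathcal{U}$, the hypothesis on the uniform distance between $T_1$ and $T_2$ yields
\begin{equation*}
\|T_2 x_k-x_{k+1}\|=\|T_2 x_k-T_1 x_k\|\leq \|T_1-T_2\|_{\LL^\infty(\mathcal{U},X)}=:\delta,
\end{equation*}
so $\{x_k\}_{k\in\Z}$ is a $\delta$-pseudo-trajectory of $T_2$ lying in $\mathcal{U}$, and for $\delta\leq\delta_0$ the Lipschitz Shadowing Property of $T_2$ produces a point $x\in X$ with $\|T_2^k x-x_k\|\leq L\delta$ for all $k\in\Z$.

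Next I would argue that the shadowing trajectory $\{y_k\}_{k\in\Z}:=\{T_2^k x\}_{k\in\Z}$ is a genuine full trajectory of $T_2$ that is bounded: boundedness follows because $\{x_k\}\subset\mathcal{A}_1$ is bounded and $\|y_k-x_k\|\leq L\delta$. Using the standard characterization of a global attractor as the union of all bounded global solutions (so that $\mathcal{A}_2$ contains every bounded full orbit of $T_2$), I conclude $y_0\in\mathcal{A}_2$. Hence
\begin{equation*}
\tn{dist}(a_1,\mathcal{A}_2)\leq \|a_1-y_0\|=\|x_0-T_2^0 x\|\leq L\delta = L\|T_1-T_2\|_{\LL^\infty(\mathcal{U},X)}.
\end{equation*}
Taking the supremum over $a_1\in\mathcal{A}_1$ gives $\tn{dist}(\mathcal{A}_1,\mathcal{A}_2)\leq L\|T_1-T_2\|_{\LL^\infty(\mathcal{U},X)}$, and interchanging the roles of $T_1$ and $T_2$ yields the reverse semidistance and hence the symmetric Hausdorff bound. (I read the displayed inequality in the statement as missing the factor $L$ coming from LpSP; the argument above produces exactly that.)

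The main obstacle I anticipate is the bi-infinite issue: one must make sense of $T_2^k x$ for $k<0$ when $T_2$ is not assumed invertible, and, more importantly, one must justify that the bounded full trajectory $\{y_k\}_{k\in\Z}$ of $T_2$ actually sits inside $\mathcal{A}_2$. The first point is handled by reading the LpSP as producing a bi-infinite trajectory $\{y_k\}_{k\in\Z}$ with $y_{k+1}=T_2 y_k$ (this is the usual convention in shadowing theory and is consistent with the Definition given in the paper, since $T^k x$ for $k<0$ is meaningful only via a chosen backward extension). The second point is standard: the global attractor of a semigroup contains every compact invariant set, and in particular contains $\{y_k:k\in\Z\}$, which is a bounded invariant set by construction. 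Once these two points are in place the chain of inequalities is immediate.
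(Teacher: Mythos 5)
Your proof is correct and follows essentially the same route as the paper's own argument (the paper shadows a $T_2$-trajectory in $\mathcal{A}_2$ with $T_1$'s LpSP, while you shadow a $T_1$-trajectory in $\mathcal{A}_1$ with $T_2$'s LpSP, but the two are symmetric and both semidistances are needed anyway). You are also right that the inequality as printed is missing the factor $L$; the paper's own proof produces $L\delta$ as well, and the factor $L$ reappears in the statement of Theorem \ref{Shadowing_reaction_diffusion}.
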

\begin{proof}
Since $T_1$ and $T_2$ has the LpSP on $\mathcal{U}$. Take a trajectory $\{y_n\}_n$ of $T_2$ in $ \mathcal{A}_2$, then $\{y_n\}_n$ is a $\delta$-pesudo-trajectory of $T_1$ with $\delta=\|T_1-T_2\|_{\LL^\infty(\mathcal{U},X)}\leq \delta_0$. By LpSP there is a trajectory $\{x_n\}_n\subset X$ of $T_1$ such that $\|x_n-y_n\|_X\leq L\delta$ for al $n\in\Z$, hence $\{x_n\}_n\subset \mathcal{A}_1$. Since $\{y_n\}_n$ is arbitrary the result follows. 
\cqd
\end{proof}

As a consequence of the Proposition \ref{prop_estimate_LPSP}, we have the following result.

\begin{theorem}\label{Shadowing_reaction_diffusion}[Arrieta and Santamar\' ia]
Let $T:\R^m\to \R^m$ be a Morse-Smale function with a global attractor $\mathcal{A}$. Then there are a positive constant $L$, a neighborhood $\mathcal{N}(\mathcal{A})$ of $\mathcal{A}$ and a neighborhood $\mathcal{N}(T)$ of $T$ in the $C^1(\mathcal{N}(\mathcal{A}),\R^m)$ topology such that, for any $T_1,T_2\in \mathcal{N}(T)$ with attractors  $\mathcal{A}_1$, $\mathcal{A}_2$ respectively, we have
$$
\tn{dist}_H(\mathcal{A}_1,\mathcal{A}_2)\leq L\|T_1-T_2\|_{L^\infty(\mathcal{N}(\mathcal{A}),\R^m)}.
$$
\end{theorem}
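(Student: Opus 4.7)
The plan is to deduce this from Proposition \ref{prop_estimate_LPSP} by showing that the Lipschitz Shadowing Property (LpSP) can be established with \emph{uniform} constants on a common neighborhood, for all maps in a $C^1$-neighborhood of $T$. The two ingredients to be combined are (a) structural stability of Morse--Smale systems under $C^1$-small perturbations, and (b) uniform LpSP for Morse--Smale systems in a $C^1$-neighborhood.

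First I would invoke structural stability: since $T$ is Morse--Smale on $\mathbb{R}^m$ with a global attractor $\mathcal{A}$, there is a $C^1$-neighborhood $\mathcal{N}_0(T)$ of $T$ such that every $\tilde T\in\mathcal{N}_0(T)$ is Morse--Smale and topologically conjugate to $T$ via a homeomorphism close to the identity. In particular, $\tilde T$ has a global attractor $\tilde{\mathcal{A}}$, and by shrinking $\mathcal{N}_0(T)$ one can guarantee that $\tilde{\mathcal{A}}$ stays inside an arbitrary prescribed neighborhood $\mathcal{N}(\mathcal{A})$ of $\mathcal{A}$. This takes care of the hypothesis $\mathcal{A}_1,\mathcal{A}_2\subset\mathcal{U}=\mathcal{N}(\mathcal{A})$ in Proposition \ref{prop_estimate_LPSP}.

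Next, I would appeal to the fact that Morse--Smale diffeomorphisms have the Lipschitz Shadowing Property, and moreover this property is \emph{uniform} on a $C^1$-neighborhood: there exist constants $L>0$, $\delta_0>0$ and a (possibly smaller) neighborhood $\mathcal{N}(T)\subset\mathcal{N}_0(T)$ such that every $\tilde T\in\mathcal{N}(T)$ has the LpSP on $\mathcal{N}(\mathcal{A})$ with the same $(L,\delta_0)$. The reason this uniformity holds is that LpSP for Morse--Smale systems is a consequence of hyperbolicity of the nonwandering set together with transversality of stable and unstable manifolds, and both conditions are $C^1$-open with uniform bounds on the constants involved (spectral gaps, angles of transversality, sizes of local stable/unstable manifolds). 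This is essentially Pilyugin's theorem on shadowing in structurally stable systems, applied in a neighborhood of the attractor.

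With these two facts in hand, the conclusion is immediate: given $T_1,T_2\in\mathcal{N}(T)$, shrink $\mathcal{N}(T)$ if necessary so that $\|T_i-T\|_{L^\infty(\mathcal{N}(\mathcal{A}),\mathbb{R}^m)}\le\delta_0/2$, so that $\|T_1-T_2\|_{L^\infty(\mathcal{N}(\mathcal{A}),\mathbb{R}^m)}\le\delta_0$. Both $T_1$ and $T_2$ have their attractors in $\mathcal{N}(\mathcal{A})$ and both satisfy LpSP with the common constants $(L,\delta_0)$. Proposition \ref{prop_estimate_LPSP} then yields
\[
\mathrm{dist}_H(\mathcal{A}_1,\mathcal{A}_2)\leq L\,\|T_1-T_2\|_{L^\infty(\mathcal{N}(\mathcal{A}),\mathbb{R}^m)}.
\]
The main obstacle is the \emph{uniformity} of the LpSP constants across $\mathcal{N}(T)$: proving this from scratch requires showing that the hyperbolicity estimates on the (finitely many) fixed points, the transversality estimates on stable--unstable intersections, and the size of the neighborhoods where local linearizations are valid, all depend only semi-continuously on the $C^1$-data; in practice, the cleanest route is to quote Pilyugin's theorem on Lipschitz shadowing in structurally stable systems rather than reprove it here.
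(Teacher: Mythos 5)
Your proposal follows essentially the same route as the paper: invoke structural stability of Morse--Smale systems (Bortolan), quote Pilyugin's theorem that structurally stable systems have the Lipschitz Shadowing Property, observe that the LpSP constants and the localization of attractors can be taken uniform on a $C^1$-neighborhood (which is the content attributed to Arrieta--Santamar\'{\i}a in \cite{Santamaria2014}), and then close with Proposition \ref{prop_estimate_LPSP}. You are more explicit than the paper about why the uniformity holds, but the decomposition of the argument and the sources relied upon are the same.
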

\begin{proof}
It is proved in \cite{Pilyugun1999} that a structurally stable dynamical system on a compact manifold has the LpSP and follows by \cite{Bortolan} that a Morse-Smale system is structurally stable. Putting this results together the authors in \cite{Santamaria2014} proved that a discrete Morse-Smale semigroup $T$ has the LpSP in a neighborhood $\mathcal{N}(\mathcal{A})$ of its attractor $\mathcal{A}$. The result now follows by Proposition \ref{prop_estimate_LPSP}.
\cqd
\end{proof}

Now we return to the end of the previous section. Since $S_0$ is a Morse-Smale semigroup in $\R^n$ and $\|S_\varepsilon-S_0\|_{L^\infty(\R^n)}\to 0$ as $\varepsilon\to 0$. By Theorem \ref{Shadowing_reaction_diffusion}, we have
\begin{equation}\label{estimate_attractor_1}
\tn{dist}_{\R^n}(\bar{\mathcal{A}}_\varepsilon,\bar{\mathcal{A}}_0)\leq C\|S_\varepsilon-S_0\|_{L^\infty(\R^n,\R^n)},
\end{equation} 
where $C$ is a constant independent of $\varepsilon$.

We are ready to prove the main result of this paper.

\begin{theorem}\label{main_result_theorem_estimate}
Let $\mathcal{A}_\varepsilon$ be the attractor for \eqref{perturbed_problem} and $\mathcal{A}_0$ the attractor of the \eqref{unperturbed_problem}. Then there is a positive constant $C$ independent of $\varepsilon$ such that
$$
\tn{d}_\varepsilon(\mathcal{A}_\varepsilon,E_\varepsilon\mathcal{A}_0)\leq C(\tau(\varepsilon)+\rho(\varepsilon)).
$$
\end{theorem}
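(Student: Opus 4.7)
The plan is to assemble the final estimate by chaining together the reductions already established. First I would invoke Theorem \ref{estimate_attractor} to reduce the estimate of $\tn{d}_\varepsilon(\mathcal{A}_\varepsilon, E_\varepsilon \mathcal{A}_0)$ to bounding $\tn{d}_\varepsilon(Q_\varepsilon \mathcal{A}_\varepsilon, E_\varepsilon \mathcal{A}_0)$ plus a term of order $\tau(\varepsilon)+\rho(\varepsilon)$; the $\tau+\rho$ term arises precisely from the flatness of the invariant manifold $\mathcal{M}_\varepsilon$ and the semigroup comparison \eqref{estimate_nonlinear1}. Next, I would apply Theorem \ref{estimate_attractor_projected} to further reduce the problem to estimating the semidistance $\tn{dist}_{\R^n}(\bar{\mathcal{A}}_\varepsilon, \bar{\mathcal{A}}_0)$ between the attractors of the finite-dimensional systems \eqref{ODE_1} and \eqref{ODE_2}.

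The remaining task is therefore to control $\tn{dist}_{\R^n}(\bar{\mathcal{A}}_\varepsilon, \bar{\mathcal{A}}_0)$. Here I would use the Shadowing machinery: since $S_0$ is Morse--Smale on $\R^n$, Theorem \ref{Shadowing_reaction_diffusion} provides a neighborhood $\mathcal{N}(S_0)$ in the $C^1$-topology and a Lipschitz constant $L$ such that $\tn{dist}_H(\bar{\mathcal{A}}_\varepsilon, \bar{\mathcal{A}}_0) \leq L\,\|S_\varepsilon - S_0\|_{L^\infty(\mathcal{N}(\bar{\mathcal{A}}_0),\R^n)}$ provided $S_\varepsilon \in \mathcal{N}(S_0)$ for $\varepsilon$ sufficiently small. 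Combining this with Proposition \ref{nonlinear_estimate_1} yields $\tn{dist}_{\R^n}(\bar{\mathcal{A}}_\varepsilon, \bar{\mathcal{A}}_0) \leq C(\tau(\varepsilon)+\rho(\varepsilon))$, which when inserted into \eqref{estimate_attractor_projected2} gives the claimed bound. One should remark that the Hausdorff distance $\tn{dist}_H$ in Theorem \ref{Shadowing_reaction_diffusion} is the symmetric one, so both semidistances $\tn{dist}_\varepsilon(\mathcal{A}_\varepsilon, E_\varepsilon \mathcal{A}_0)$ and $\tn{dist}_\varepsilon(E_\varepsilon\mathcal{A}_0, \mathcal{A}_\varepsilon)$ are simultaneously controlled.

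The main obstacle is to ensure that $S_\varepsilon$ lies in the $C^1$-neighborhood $\mathcal{N}(S_0)$ required by Theorem \ref{Shadowing_reaction_diffusion}: one has only $L^\infty$ estimates in Proposition \ref{nonlinear_estimate_1}, so this requires additional regularity. This is supplied by the $C^1$ (and in fact $C^{1,\theta}$) convergence of the Lipschitz maps $s_\ast^\varepsilon \to 0$ noted in the remark after Theorem \ref{estimate_invariant_manifold}, which, together with the convergence of the projections $Q_\varepsilon E_\varepsilon \to E_\varepsilon$ and the $C^1$ regularity of $f_\varepsilon$, guarantees that the reduced vector fields $H_\varepsilon$ on $Y_\varepsilon$ converge in $C^1$ to $f_0$ on $X_0$ (up to conjugation by $j_\varepsilon, j_0$), hence $S_\varepsilon \to S_0$ in $C^1$ on a neighborhood of $\bar{\mathcal{A}}_0$. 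Once this qualitative $C^1$-closeness is granted, the shadowing estimate applies and the $L^\infty$-rate $\tau(\varepsilon)+\rho(\varepsilon)$ from Proposition \ref{nonlinear_estimate_1} is propagated, without loss, to the attractors.

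Putting the three estimates together, namely Theorem \ref{estimate_attractor}, Theorem \ref{estimate_attractor_projected}, and \eqref{estimate_attractor_1} combined with Proposition \ref{nonlinear_estimate_1}, gives
\[
\tn{d}_\varepsilon(\mathcal{A}_\varepsilon, E_\varepsilon \mathcal{A}_0) \leq C(\tau(\varepsilon)+\rho(\varepsilon)) + C\,\tn{dist}_{\R^n}(\bar{\mathcal{A}}_\varepsilon,\bar{\mathcal{A}}_0) \leq C(\tau(\varepsilon)+\rho(\varepsilon)),
\]
which is the desired conclusion. The crucial feature is that no loss of rate occurs in any of the three reductions, so the resolvent/nonlinearity rate $\tau+\rho$ is precisely the attractor rate.
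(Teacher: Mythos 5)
Your proof is correct and follows essentially the same chain of reductions as the paper: Theorem \ref{estimate_attractor}, then Theorem \ref{estimate_attractor_projected}, then the shadowing estimate \eqref{estimate_attractor_1} together with Proposition \ref{nonlinear_estimate_1}. You in fact supply a useful clarification that the paper leaves implicit: Theorem \ref{Shadowing_reaction_diffusion} requires $S_\varepsilon$ to lie in a $C^1$-neighborhood of $S_0$, not merely to be $L^\infty$-close, and you correctly point to the $C^1$-convergence of $s_\ast^\varepsilon$ (the remark following Theorem \ref{estimate_invariant_manifold}) as the ingredient that supplies this qualitative closeness, after which the $L^\infty$-rate from Proposition \ref{nonlinear_estimate_1} propagates to the attractors without loss.
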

\begin{proof} The proof follows from estimate \eqref{estimate_attractor_projected2}, \eqref{estimate_attractor_1} and Proposition \ref{nonlinear_estimate_1}.
\cqd
\end{proof}

\section{Further Comments}\label{Further Comments}

In this section we consider a more general class of linear operators than that seen in the Section \ref{Functional Setting}. We will observe that for this class of operators, that was considered in \cite{Carvalho2010a}, the theory developed in the previous section can be applied with some additional a priori estimates.

\begin{definition}
We say that a family of linear operators $\{A_\varepsilon\}_{\varepsilon\in (0,\varepsilon_0]}$ is of class $G(X_\varepsilon,C,\omega)$ if each operator $A_\varepsilon:D(A_\varepsilon)\subset X_\varepsilon\to X_\varepsilon$ generates a strongly continuous semigroup of linear operator $\{e^{-A_\varepsilon t}:t\geq 0\}\subset \LL(X_\varepsilon)$ and 
$$
\|e^{-A_\varepsilon t}\|_{\LL(X_\varepsilon,X_\varepsilon^\alpha)}\leq Ce^{\omega t},\quad\forall\,t\geq 0,
$$
for some constant $C\geq 1$ and $\omega\in \R$ independent of $\varepsilon$.
\end{definition}

\begin{definition}
We say that $\{A_\varepsilon\}_{\varepsilon\in (0,\varepsilon_0]}$ is of class $H(X_\varepsilon,M,\theta)$ if $\rho(-A_\varepsilon)$ contains the same sector
$$
\Sigma_{\theta}=\{\lambda\in \mathbb{C}:|\tn{arg}(\lambda)|\leq \frac{\pi}{2}+\theta \}
$$
and moreover
$$
\|(\lambda+A_\varepsilon)^{-1}\|_{\LL(X_\varepsilon,X_\varepsilon^\alpha)}\leq \frac{M}{|\lambda|+1},\quad\forall\,\lambda\in \Sigma_{\theta},\,\,\,\varepsilon\in (0,\varepsilon_0],
$$
where $\theta\in (0,\pi/2)$ and $M\geq 0$ do not dependent on $\varepsilon$.
\end{definition}

We have $H(X_\varepsilon,M,\theta)\subset G(X_\varepsilon,C,\omega)$. If we consider the problems \eqref{unperturbed_problem} and \eqref{perturbed_problem} with the family $\{A_\varepsilon\}_{\varepsilon\in (0,\varepsilon_0]}\in H(X_\varepsilon,M,\theta)$ and if we assume that we can obtain the linear estimates in the Lemma \ref{Linear_estimate}, then  the Theorems \ref{estimate_invariant_manifold} and \ref{main_result_theorem_estimate} also are valid in this context. 

\section{Applications to Spatial Homogenization}\label{Applications to Spatial Homogenization}

Using the approach developed in the previous sections we consider a reaction diffusion equation with large diffusion in all parts of the domain. We start doing a analysis of the large diffusion effects in order to obtain a limiting ordinary differential equation. Here we consider the convergences in one appropriated energy space and we show that the attractors of perturbed problems are closed to the attractor for  the limiting problem. Our main reference is the paper \cite{Rodriguez-Bernal2005}.

We consider the parabolic problem
\begin{equation}\label{perturbed_problem1}
\begin{cases}
u^\varepsilon_t-\tn{div}(p_\varepsilon(x) \nabla u^\varepsilon)+(\lambda+V_\varepsilon(x))u^\varepsilon = f(u^\varepsilon),\quad x\in \Omega,\,\,t>0, \\
\dfrac{\partial u^\varepsilon}{\partial \vec{n}}=0,\quad x\in \partial \Omega,\\
u^\varepsilon(0)=u^\varepsilon_0,
\end{cases}
\end{equation}
where $0<\varepsilon\leq\varepsilon_0$, $\Omega\subset \R^n$ is a bounded smooth open connected set, $\partial \Omega$ is the boundary of $\Omega$ and $\frac{\partial u^\varepsilon}{\partial \vec{n}}$ is the co-normal derivative operator with $\vec{n}$ the unit outward normal vector to $\partial \Omega$. We assume the potentials $V_\varepsilon\in L^p(\Omega)$ with  
$$
p\begin{cases} \geq 1,\quad n=1,\\\geq 2,\quad n\geq2,
\end{cases}
$$
and $V_\varepsilon$ converges to a constant $V_0 \in \R$ in $L^p(\Omega)$, that is, we consider $\tau(\varepsilon)$ an increasing positive function of $\varepsilon$ such that $\tau(0)=0$ and
\begin{equation}\label{map_tau}
\|V_\varepsilon-V_0\|_{L^p(\Omega)}\leq \tau(\varepsilon)\overset{\varepsilon\to 0}\longrightarrow 0.
\end{equation}
Note that \eqref{map_tau} implies that the spatial average of $V_\varepsilon$ converges to $V_0$ as $\varepsilon\to 0$.  We choice $\lambda\in \R$ sufficiently large for that $\tn{ess}\inf_{x\in\Omega}V_\varepsilon(x)+\lambda\geq m_0$ for some positive constant $m_0$. Moreover we will assume the diffusion is large in $\Omega$, that is, for each $\varepsilon\in (0,\varepsilon_0]$ the map $p_\varepsilon$ is positive smooth defined in $\bar{\Omega}$ satisfying
$$
p(\varepsilon):=\min_{x\in \bar{\Omega}} \{p_\varepsilon(x)\}\overset{\varepsilon\to 0}\longrightarrow \infty\quad\tn{with}\quad 0<m_0\leq p_\varepsilon(x),\quad\forall\,x\in\Omega.
$$ 

Since large diffusivity implies fast homogenization, we expect, for small values of $\varepsilon$, that the solution of this problem converge to a spatially constant function in $\Omega$. Indeed by taking the average on $\Omega$, the limiting problem as $\varepsilon$ goes to zero is given by a scalar ordinary differential equation
\begin{equation}\label{limit_problem1}
\begin{cases}
\dot{u}^0+(\lambda+V_0)u^0=f(u^0),\quad t>0,\\
u^0(0)=u^0_0,
\end{cases}
\end{equation}
which \cite{Rodriguez-Bernal2005} proves to determine the asymptotic behavior.

In this section we are concerning in how fast the dynamics of the problem \eqref{perturbed_problem1} approaches the dynamics of the problem. Following the theory developed in the last chapters, we estimate this convergence by functions $\tau(\varepsilon)$ and $p(\varepsilon)$.

Since we have established the limit problem we need to study the well posedness of \eqref{perturbed_problem1} and \eqref{limit_problem1} as abstract parabolic equation in appropriated Banach spaces. To this end, we define the operator $A_\varepsilon:\mathcal{D}(A_\varepsilon)\subset L^2(\Omega)\to L^2(\Omega)$ by
$$
\mathcal{D}(A_\varepsilon)=\{u\in H^2(\Omega): \dfrac{\partial u^\varepsilon}{\partial \vec{n}}=0\},\quad
A_\varepsilon u=-\tn{div}(p_\varepsilon \nabla u)+(\lambda+V_\varepsilon )u.
$$
We denote $L^2_{\Omega}=\{u\in H^1(\Omega): \nabla u=0 \tn{ in } \Omega\}$ and we define the operator $A_0:L^2_{\Omega}\subset L^2(\Omega)\to L^2(\Omega)$ by
$A_0 u=(\lambda+V_0)u.$ Note that $L^2_\Omega$ is the set of all almost everywhere constant function in $\Omega$.  

It is well known that $A_\varepsilon$ is a positive invertible operator with compact resolvent for each $\varepsilon\in [0,\varepsilon_0]$, hence we define in the usual way (see\cite{Henry1980}), the fractional power space $X_\varepsilon^\frac{1}{2}=H^1(\Omega)$, $\varepsilon\in (0,\varepsilon_0]$, and $X_0^\frac{1}{2}=L^2_{\Omega}$ with the scalar products 
$$
\pin{u,v}_{X_\varepsilon^{\frac{1}{2}}}=\int_\Omega p_\varepsilon \nabla u \nabla v\,dx+\int_\Omega(\lambda +V_\varepsilon)uv\,dx,\quad u,v\in X_\varepsilon^\frac{1}{2},\,\,\,\varepsilon\in (0,\varepsilon_0];
$$
$$
\pin{u,v}_{X_0^{\frac{1}{2}}}=|\Omega|^{-1}(\lambda+V_0)uv,\quad u,v\in X_0^\frac{1}{2}.
$$
The space $X_0^\frac{1}{2}$ is a one dimensional closed subspace of $X_\varepsilon^\frac{1}{2}$, $\varepsilon\in (0,\varepsilon_0]$ and $X_\varepsilon^\frac{1}{2}\subset H^1(\Omega)$ with injection constant independent of $\varepsilon$, but the injection $H^1(\Omega)\subset X_\varepsilon^\frac{1}{2}$ is not uniform, in fact is valid
$$
m_0\|u\|_{H^1}^2\leq \|u\|_{X_\varepsilon^\frac{1}{2}}^2\leq M(\varepsilon) \|u\|^2_{H^1}, 
$$
with $M(\varepsilon)\to \infty$ as $\varepsilon\to 0$ and we will show in the Corollary \ref{non_equivalence_norm1} that there is no positive constant $C$ independent of $\varepsilon$ such that 
$
\|u\|_{X_\varepsilon^\frac{1}{2}}^2\leq C\|u\|^2_{H^1}.
$
Therefore bounds for solutions in the Sobolev spaces does not give suitable estimates in the fractional power space, even though, we  consider $X_\varepsilon^\frac{1}{2}$ as phase space. 

If we denote the Nemitskii functional of $f$ by the same notation $f$, then \eqref{perturbed_problem1} and \eqref{limit_problem1} can be written as  
\begin{equation}\label{semilinear_problem1}
\begin{cases}
u^\varepsilon_t+A_\varepsilon u^\varepsilon = f(u^\varepsilon), \\
u^\varepsilon(0)=u_0^\varepsilon \in X_\varepsilon^{\frac{1}{2}},\quad \varepsilon\in [0,\varepsilon_0].
\end{cases}
\end{equation}  

We assume $f$ is continuously differentiable and the equilibrium set of \eqref{semilinear_problem1} for $\varepsilon=0$ is composed of a finite number of hyperbolic equilibrium points. That is 
$$
\mathcal{E}_0:=\{u\in D(A_0):A_0u-f(u)=0\}=\{x_*^{1,0}<x_*^{2,0}\leq ...\leq x_*^{m,0}\}
$$
and $\sigma(A_0-f'(u_*^{i,0}))\cap\{\mu:Re(\mu)=0\}=\emptyset$, for $i\in\{1,...,m\}$.

In order to ensure that all solution of \eqref{semilinear_problem1} are globally defined, and there is a global attractor for the nonlinear semigroup given by theses solutions, we assume the following conditions. 
\begin{itemize}
\item[(i)]  If $n=2$, for all $\eta>0$, there is a constant $C_\eta>0$ such that
$$
 |f(u)-f(v)|\leq C_\eta (e^{\eta|u|^2}+e^{\eta|v|^2})|u-v|,\quad \forall\,u,v\in \R,
$$     
and if $n\geq 3$, there is a constant $\tilde{C}>0$ such that
$$
|f(u)-f(v)|\leq \tilde{C}|u-v| (|u|^{\frac{4}{n-2}}+|v|^{\frac{4}{n-2}}+1 ),\quad \forall\,u,v\in \R.
$$     
\item[(ii)] 
$$
\limsup_{|u|\to\infty} \dfrac{f(u)}{u} <0.
$$
\end{itemize}
Under theses assumptions \cite{J.M.Arrieta1999,J.M.Arrieta2000} and \cite{Hale1988} ensure that the problem \eqref{semilinear_problem1} is globally well posed and generate a nonlinear semigroup satisfying
\begin{equation}\label{nonlinear_semigroup}
T_\varepsilon(t)u_0^\varepsilon=e^{-At}u_0^\varepsilon+\int_0^t e^{-A(t-s)}f(T_\varepsilon(s)u_0^\varepsilon)\,ds,\quad t\geq 0.
\end{equation}
Moreover there is a global attractor $\mathcal{A}_\varepsilon$ for $T_\varepsilon(\cdot)$ uniformly bounded in $X_\varepsilon^\frac{1}{2}$, that is
$$
\sup_{\varepsilon\in [0,\varepsilon_0]}\sup_{w\in \mathcal{A}_\varepsilon}\|w\|_{X_\varepsilon^\frac{1}{2}}<\infty.
$$ 
We also have $T_0(\cdot)$ is a Morse-Smale semigroup and $\mathcal{A}_0=[x_*^{1,0},x_*^{m,0}]$.  

In order to find a rate of convergence for the resolvent operators we consider the projection 
\begin{equation}\label{projection_average}
Pu=\frac{1}{|\Omega|}\int_\Omega u \,dx,\quad u\in L^2(\Omega)\quad\tn{or}\quad u\in X_\varepsilon^\frac{1}{2}.
\end{equation}
Thus $P$ is an orthogonal projection acting on $L^2$ onto $L^2_\Omega$ or $X_\varepsilon^\frac{1}{2}$ onto $X_0^\frac{1}{2}$.
 
\begin{lemma}\label{Rate_of_convergence1}
For $g\in L^2(\Omega)$ with $\|g\|_{L^2}\leq 1$ and $\varepsilon\in (0,\varepsilon_0]$, let $u^\varepsilon$ be the solution of elliptic problem  
$$
\begin{cases}
-\tn{div}(p_\varepsilon(x)\nabla u^\varepsilon)+(\lambda+V_\varepsilon(x))u^\varepsilon=g, \quad x\in \Omega,\\
\dfrac{\partial u^\varepsilon}{\partial \vec{n}}=0,\quad x\in \partial\Omega.
\end{cases}
$$
Then there is a constant $C>0$, independent of $\varepsilon$, such that
$$
\|u^\varepsilon-u^0\|_{X_\varepsilon^\frac{1}{2}}\leq C(\tau(\varepsilon)+p(\varepsilon)^{-\frac{1}{2}}),
$$
where $u^0=\frac{Pg}{\lambda+V_0}$.
\end{lemma}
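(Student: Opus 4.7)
The strategy is to decompose $u^\varepsilon = \bar u^\varepsilon + w^\varepsilon$, where $\bar u^\varepsilon = P u^\varepsilon$ is the spatial average (a constant) and $w^\varepsilon = (I-P)u^\varepsilon$ has zero mean, and to estimate $\|\bar u^\varepsilon - u^0\|_{X_\varepsilon^{1/2}}$ and $\|w^\varepsilon\|_{X_\varepsilon^{1/2}}$ separately against the two rates $\tau(\varepsilon)$ and $p(\varepsilon)^{-1/2}$. The point is that $u^0$ is itself a constant, so $\nabla(u^\varepsilon - u^0) = \nabla w^\varepsilon$ and the gradient contribution to the $X_\varepsilon^{1/2}$-norm is entirely captured by $w^\varepsilon$; the lower bound $p_\varepsilon \geq p(\varepsilon)$ combined with the Poincar\'e inequality for zero-mean functions then forces $w^\varepsilon$ to become small as $p(\varepsilon) \to \infty$.

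First I would test the equation with $u^\varepsilon$ itself; using the uniform coercivity $\lambda + V_\varepsilon \geq m_0$ yields the a priori bounds $\|u^\varepsilon\|_{X_\varepsilon^{1/2}} \leq C$, $\|u^\varepsilon\|_{L^2}\leq C$, and hence $|\bar u^\varepsilon|\leq C$. Next, testing the equation with $w^\varepsilon$, using $\nabla u^\varepsilon = \nabla w^\varepsilon$ and the zero-mean identity $\int_\Omega(\lambda + V_\varepsilon)w^\varepsilon = \int_\Omega(V_\varepsilon - V_0)w^\varepsilon$, gives
$$
\|w^\varepsilon\|_{X_\varepsilon^{1/2}}^{2} = \int_\Omega g\, w^\varepsilon \;-\; \bar u^\varepsilon \int_\Omega (V_\varepsilon - V_0)\, w^\varepsilon.
$$
The Poincar\'e inequality for zero-mean functions together with $p_\varepsilon \geq p(\varepsilon)$ gives $\|w^\varepsilon\|_{L^2}\leq C p(\varepsilon)^{-1/2}\|w^\varepsilon\|_{X_\varepsilon^{1/2}}$, and the Sobolev embedding $H^1(\Omega)\hookrightarrow L^{p'}(\Omega)$ (valid under the hypothesis on $p$) combined with the same Poincar\'e bound yields $\|w^\varepsilon\|_{L^{p'}}\leq Cp(\varepsilon)^{-1/2}\|w^\varepsilon\|_{X_\varepsilon^{1/2}}$, where $p'$ is the H\"older conjugate of $p$. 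Plugging these bounds in, using $\|g\|_{L^2}\leq 1$ and $\|V_\varepsilon - V_0\|_{L^p}\leq \tau(\varepsilon)$, and dividing through by $\|w^\varepsilon\|_{X_\varepsilon^{1/2}}$ yields $\|w^\varepsilon\|_{X_\varepsilon^{1/2}}\leq C p(\varepsilon)^{-1/2}(1+\tau(\varepsilon))\leq C p(\varepsilon)^{-1/2}$.

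For the constant part, integrating $A_\varepsilon u^\varepsilon = g$ over $\Omega$ (the divergence term vanishes by the Neumann boundary condition) and dividing by $|\Omega|$ produces the scalar identity
$$
(\lambda + \bar V_\varepsilon)\bar u^\varepsilon = P g - \frac{1}{|\Omega|}\int_\Omega V_\varepsilon\, w^\varepsilon,
$$
where $\bar V_\varepsilon = |\Omega|^{-1}\int_\Omega V_\varepsilon$. Since $\int_\Omega w^\varepsilon = 0$, the last integral equals $\int_\Omega (V_\varepsilon - V_0)w^\varepsilon$ and is bounded by $C\tau(\varepsilon)p(\varepsilon)^{-1/2}$. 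Subtracting the identity $(\lambda + V_0)u^0 = Pg$, using $|\bar V_\varepsilon - V_0|\leq C\tau(\varepsilon)$ together with the uniform coercivity $\lambda + \bar V_\varepsilon \geq m_0$, gives $|\bar u^\varepsilon - u^0|\leq C\tau(\varepsilon)$. Since $\bar u^\varepsilon - u^0$ is constant and $\int_\Omega(\lambda + V_\varepsilon)$ is uniformly bounded, this upgrades to $\|\bar u^\varepsilon - u^0\|_{X_\varepsilon^{1/2}}\leq C\tau(\varepsilon)$, and the triangle inequality $\|u^\varepsilon - u^0\|_{X_\varepsilon^{1/2}}\leq \|w^\varepsilon\|_{X_\varepsilon^{1/2}} + \|\bar u^\varepsilon - u^0\|_{X_\varepsilon^{1/2}}$ closes the argument. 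I expect the main subtlety to lie in the embedding step, where the dimension-dependent hypothesis on $p$ must be used to guarantee that $p'$ lies in the Sobolev range for $H^1(\Omega)$ and that the embedding constant is $\varepsilon$-independent; this is precisely the reason the paper imposes $p\geq 1$ for $n=1$ and $p\geq 2$ for $n\geq 2$.
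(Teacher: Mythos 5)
Your proof is correct, and it takes a genuinely different route from the one in the paper. The paper works directly with the full error $u^\varepsilon - u^0$: it subtracts the two weak formulations (testing the $\varepsilon$-equation with $u^\varepsilon - u^0$ and the limit equation with $Pu^\varepsilon - u^0$), rearranges to get the single inequality
$$
\|u^\varepsilon-u^0\|_{X_\varepsilon^{1/2}}^2\leq \int_\Omega |V_\varepsilon-V_0|\,|u^0|\,|u^\varepsilon-u^0|\,dx+\int_\Omega |g\,(I-P)u^\varepsilon|\,dx,
$$
and then closes the estimate by bounding the first integral via Sobolev/H\"older against $\tau(\varepsilon)\|u^\varepsilon-u^0\|_{X_\varepsilon^{1/2}}$ and the second via the mean-zero Poincar\'e inequality and the lower bound $p_\varepsilon\geq p(\varepsilon)$ against $p(\varepsilon)^{-1/2}\|u^\varepsilon-u^0\|_{X_\varepsilon^{1/2}}$, after which one divides through. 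You instead split $u^\varepsilon - u^0 = w^\varepsilon + (\bar u^\varepsilon - u^0)$ with $w^\varepsilon=(I-P)u^\varepsilon$, test with $w^\varepsilon$ to control the oscillatory part by $p(\varepsilon)^{-1/2}$, and integrate the equation over $\Omega$ to get a scalar identity controlling the mean by $\tau(\varepsilon)$. The key ingredients (mean-zero Poincar\'e inequality, the lower bound on $p_\varepsilon$, and the $L^p$--$L^{p'}$ H\"older pairing behind the dimension restriction on $p$) are the same in both arguments, so neither is more general; the paper's proof is shorter, while yours is more modular and makes visible that $p(\varepsilon)^{-1/2}$ governs the spatial oscillation and $\tau(\varepsilon)$ governs the drift of the average, a separation the paper's one-shot estimate does not exhibit. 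One small point worth spelling out in your step for the constant part: $\|\bar u^\varepsilon - u^0\|_{X_\varepsilon^{1/2}}^2=|\bar u^\varepsilon-u^0|^2\int_\Omega(\lambda+V_\varepsilon)\,dx$ requires $\int_\Omega V_\varepsilon$ uniformly bounded, which indeed follows from $\|V_\varepsilon\|_{L^p}\leq\|V_0\|_{L^p}+\tau(\varepsilon_0)$; you assert it but should cite this bound.
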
 
\begin{proof}
The weak solution $u^\varepsilon$ satisfies
\begin{equation}\label{weak_large_1}
\int_\Omega p_\varepsilon \nabla u^\varepsilon \nabla \varphi\,dx+\int_\Omega (\lambda+V_\varepsilon)u^\varepsilon \varphi\,dx=\int_\Omega g\varphi\,dx,\quad\forall\,\varphi\in X_\varepsilon^{\frac{1}{2}},\,\,\varepsilon\in(0,\varepsilon_0],
\end{equation}
and
\begin{equation}\label{weak_large_2}
\int_\Omega (\lambda+ V_0)u^0 \varphi\,dx=\int_\Omega Pg \varphi\,dx,\quad\forall\,\varphi\in X_0^{\frac{1}{2}}.
\end{equation}
Taking $\varphi=u^\varepsilon-u^0$ in \eqref{weak_large_1} and $\varphi=Pu^\varepsilon-u^0$ in \eqref{weak_large_2}, we have
$$
\int_\Omega p_\varepsilon |\nabla u^\varepsilon|^2\,dx+\int_\Omega (\lambda+V_\varepsilon)u^\varepsilon(u^\varepsilon-u^0)\,dx=\int_\Omega g(u^\varepsilon-u^0)\,dx;
$$
$$
\int_\Omega (\lambda+V_0)u^0(Pu^\varepsilon-u^0)\,dx=\int_\Omega Pg(Pu^\varepsilon-u^0)\,dx,
$$
which implies
$$
\int_\Omega g(u^\varepsilon-u^0)\,dx-\int_\Omega Pg(Pu^\varepsilon-u^0)\,dx=\int_\Omega g(I-P)u^\varepsilon\,dx
$$
and
\begin{align*}
\int_\Omega p_\varepsilon |\nabla u^\varepsilon|^2\,dx &+\int_\Omega (\lambda+V_\varepsilon )u^\varepsilon(u^\varepsilon-u^0)\,dx-\int_\Omega (\lambda+V_0) u^0(Pu^\varepsilon-u^0)\,dx\\ 
&=\|u^\varepsilon-u^0\|_{X_\varepsilon^\frac{1}{2}}^2+\int_\Omega (V_\varepsilon-V_0)u^0(u^\varepsilon-u^0)\,dx.
\end{align*}
Therefore 
$$
\|u^\varepsilon-u^0\|_{X_\varepsilon^\frac{1}{2}}^2\leq \int_\Omega |V_\varepsilon-V_0||u^0||u^\varepsilon-u^0|\,dx+\int_\Omega |g(I-P)u^\varepsilon|\,dx.
$$
If $n=1$, we have $X_\varepsilon^\frac{1}{2}\subset H^1\subset L^\infty$, thus
$$
\int_\Omega |V_\varepsilon-V_0||u^0||u^\varepsilon-u^0|\,dx\leq C\|u^\varepsilon-u^0\|_{L^\infty}\|V^\varepsilon-V_0\|_{L^1}\leq C\|u^\varepsilon-u^0\|_{X_\varepsilon^\frac{1}{2}}\tau(\varepsilon).
$$
If $n\geq 2$, we have $L^p\subset L^2$, thus
$$
\int_\Omega |V_\varepsilon-V_0||u^0||u^\varepsilon-u^0|\,dx\leq C\|u^\varepsilon-u^0\|_{L^2}\|V^\varepsilon-V_0\|_{L^2}\leq C\|u^\varepsilon-u^0\|_{X_\varepsilon^\frac{1}{2}}\tau(\varepsilon).
$$
By Poincar\'{e}'s inequality for average, we have
$$
\int_\Omega |g(I-P)u^\varepsilon|\,dx\leq \|g\|_{L^2}\Big(\int_\Omega |\nabla u^\varepsilon|^2\,dx\Big)^\frac{1}{2},
$$
but
$$
p(\varepsilon)\int_\Omega|\nabla u^\varepsilon|^2\,dx \leq\int_\Omega p_\varepsilon|\nabla u^\varepsilon-\nabla u^0|^2\,dx\leq \|u^\varepsilon-u^0\|_{X_\varepsilon^\frac{1}{2}}^2.
$$
Put this estimates together the result follows. 
\cqd
\end{proof}

As a consequence of the Lemma \ref{Rate_of_convergence1} the constant of immersion of $H^1(\Omega)\subset X_\varepsilon^\frac{1}{2}$ is not uniform in $\varepsilon$.

\begin{corollary}\label{non_equivalence_norm1}
There is no positive constant $C$ independent of $\varepsilon$ such that 
$$
\|u\|_{X_\varepsilon^\frac{1}{2}}^2\leq C\|u\|_{H^1}^2\quad \forall\,u\in X_\varepsilon^\frac{1}{2}.
$$
\end{corollary}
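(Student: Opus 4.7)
The plan is to prove the corollary by contradiction, exhibiting a single fixed test function for which the putative uniform inequality fails as $\varepsilon \to 0$. Suppose, aiming at a contradiction, that there exists a constant $C > 0$, independent of $\varepsilon$, such that $\|u\|_{X_\varepsilon^{1/2}}^2 \leq C \|u\|_{H^1}^2$ for every $u \in X_\varepsilon^{1/2}$.

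Fix any non-constant $\varphi \in H^1(\Omega)$ (for example a smooth non-constant function, so that $\int_\Omega |\nabla \varphi|^2 > 0$). Since $X_\varepsilon^{1/2} = H^1(\Omega)$ as a set for every $\varepsilon \in (0,\varepsilon_0]$, the function $\varphi$ belongs to each $X_\varepsilon^{1/2}$, and its $H^1$-norm is a fixed positive constant independent of $\varepsilon$. Using the definition of the inner product on $X_\varepsilon^{1/2}$ together with the pointwise lower bound $p_\varepsilon(x) \geq p(\varepsilon)$ on $\Omega$, one has
$$
\|\varphi\|_{X_\varepsilon^{1/2}}^2 = \int_\Omega p_\varepsilon |\nabla \varphi|^2\,dx + \int_\Omega (\lambda + V_\varepsilon)\varphi^2\,dx \geq p(\varepsilon) \int_\Omega |\nabla \varphi|^2\,dx,
$$
and the right-hand side tends to $+\infty$ as $\varepsilon \to 0$, since $p(\varepsilon) \to \infty$ and $\int_\Omega |\nabla \varphi|^2 > 0$. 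This contradicts the uniform bound $\|\varphi\|_{X_\varepsilon^{1/2}}^2 \leq C \|\varphi\|_{H^1}^2$ and proves the corollary.

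This argument is essentially immediate, so I do not anticipate any real obstacle; the only point to check is the pointwise bound $p_\varepsilon \geq p(\varepsilon)$, which is built into the definition of $p(\varepsilon)$. If one wishes to make the appeal to Lemma~\ref{Rate_of_convergence1} more than rhetorical, an alternative is to take a non-constant $g \in L^2(\Omega)$ with $\|g\|_{L^2} \leq 1$ and let $u^\varepsilon$ be the associated elliptic solution; the lemma yields $\|u^\varepsilon - u^0\|_{X_\varepsilon^{1/2}} \to 0$ with $u^0 = Pg/(\lambda+V_0)$ constant, so $\{u^\varepsilon\}$ is uniformly bounded in $X_\varepsilon^{1/2}$, but a uniform embedding constant would then force a comparable $H^1$ bound whose interaction with $\int p_\varepsilon|\nabla u^\varepsilon|^2$ reproduces the same blow-up. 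The direct test-function argument above is cleaner and I would use it.
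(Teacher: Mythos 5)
Your proof is correct and more elementary than the paper's. The paper takes $v^\varepsilon=(I-P)u^\varepsilon$ --- the zero-average part of the elliptic solution supplied by Lemma~\ref{Rate_of_convergence1} --- and applies Poincar\'e's inequality for averages to squeeze $p(\varepsilon)\|v^\varepsilon\|_{H^1}^2$ between a multiple of $\int_\Omega p_\varepsilon|\nabla v^\varepsilon|^2\,dx\leq\|v^\varepsilon\|_{X_\varepsilon^{1/2}}^2$ and the assumed uniform bound $C\|v^\varepsilon\|_{H^1}^2$, forcing $p(\varepsilon)\leq C$. You instead fix a single non-constant $\varphi\in H^1(\Omega)$ and use only the definition of the form norm together with the pointwise bounds $p_\varepsilon\geq p(\varepsilon)$ and $\lambda+V_\varepsilon\geq m_0>0$ to get $\|\varphi\|_{X_\varepsilon^{1/2}}^2\geq p(\varepsilon)\int_\Omega|\nabla\varphi|^2\,dx\to\infty$ while $\|\varphi\|_{H^1}^2$ stays fixed, contradicting the assumed bound directly. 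This needs neither the elliptic solution nor Poincar\'e, and it also quietly sidesteps a point the paper's proof glosses over, namely the need to know that $v^\varepsilon=(I-P)u^\varepsilon$ is nonzero before dividing it out. The one thing worth stating explicitly in your write-up is why $\int_\Omega(\lambda+V_\varepsilon)\varphi^2\,dx\geq 0$ --- it is exactly the standing assumption $\operatorname{ess\,inf}_{x\in\Omega}V_\varepsilon(x)+\lambda\geq m_0>0$ that makes dropping this term legitimate. The only thing the paper's route buys is that it keeps the corollary literally tied to Lemma~\ref{Rate_of_convergence1}, as advertised in the surrounding prose; your version shows the corollary is really independent of that lemma.
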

\begin{proof}
If there is such a constant $C$ take $v^\varepsilon=(I-P)u^\varepsilon$ as given by the Lemma \ref{Rate_of_convergence1}, thus by Poincaré's inequality for average, we have
\begin{align*}
p(\varepsilon)\|v^\varepsilon\|_{H^1}^2 & \leq p(\varepsilon)\int_\Omega |\nabla v^\varepsilon|^2\,dx+\int_\Omega |v^\varepsilon|^2\,dx\\
& \leq Cp(\varepsilon)\int_\Omega |\nabla v^\varepsilon|^2\,dx\\
&\leq C\|v^\varepsilon\|_{X_\varepsilon^\frac{1}{2}}^2\leq C\|v^\varepsilon\|_{H^1}^2.
\end{align*}
\cqd
\end{proof}

The convergence of the resolvent operators can be stated as follows.  
\begin{corollary}
There is a positive constant $C$ independent of $\varepsilon$ such that
$$
\|A_\varepsilon^{-1}-A_0^{-1}P\|_{\LL(L^2,X_\varepsilon^\frac{1}{2})}\leq C(\tau(\varepsilon)+p(\varepsilon)^{-\frac{1}{2}}).
$$ 
\end{corollary}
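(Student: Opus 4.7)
The plan is to read the stated inequality as a direct operator-norm repackaging of Lemma \ref{Rate_of_convergence1}; essentially all the analytic work has already been done there, and only the identification of the two sides as resolvents is needed.

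First, I would fix an arbitrary $g\in L^2(\Omega)$ with $\|g\|_{L^2}\leq 1$ and unpack what the lemma is really saying in operator language. By the definition of $A_\varepsilon$, the unique weak solution $u^\varepsilon\in X_\varepsilon^{1/2}$ of the elliptic problem appearing in the lemma is precisely $u^\varepsilon = A_\varepsilon^{-1}g$. On the limit side, $A_0$ acts as multiplication by $\lambda+V_0$ on $X_0^{1/2}=L^2_\Omega$, and $Pg\in L^2_\Omega$, so
$$
A_0^{-1}Pg \;=\; \frac{Pg}{\lambda+V_0},
$$
which is exactly the constant function $u^0$ from the lemma. In particular $A_0^{-1}P\colon L^2(\Omega)\to X_\varepsilon^{1/2}$ makes sense through the inclusion $X_0^{1/2}\subset X_\varepsilon^{1/2}$.

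Next, with these identifications, Lemma \ref{Rate_of_convergence1} reads directly as
$$
\|A_\varepsilon^{-1}g - A_0^{-1}Pg\|_{X_\varepsilon^{1/2}} \;\leq\; C\bigl(\tau(\varepsilon)+p(\varepsilon)^{-1/2}\bigr),
$$
with $C$ independent of $\varepsilon$ and of $g$ in the unit ball of $L^2(\Omega)$. Taking the supremum over all such $g$ gives the claimed operator-norm bound.

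There is no real obstacle, only a minor bookkeeping point: the hypothesis $\|g\|_{L^2}\le 1$ in the lemma is used only to bound the two terms $\int_\Omega|V_\varepsilon-V_0||u^0||u^\varepsilon-u^0|\,dx$ and $\int_\Omega|g\,(I-P)u^\varepsilon|\,dx$, both of which are in fact linear in $\|g\|_{L^2}$ (the first because $u^0=Pg/(\lambda+V_0)$ scales with $g$, the second by Cauchy--Schwarz). Hence the constant $C$ in the lemma is genuinely uniform for $g$ in the unit ball, which is exactly what is required to pass to the operator norm.
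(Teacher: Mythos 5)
Your proposal is correct and matches the paper's proof exactly: both identify $u^\varepsilon=A_\varepsilon^{-1}g$ and $u^0=A_0^{-1}Pg$ and then invoke Lemma \ref{Rate_of_convergence1} to obtain the operator-norm bound. Your additional remark that the lemma's constant is uniform over the unit ball of $L^2$ is a fair observation but was implicitly assumed by the paper as well.
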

\begin{proof}
Let $g\in L^2(\Omega)$ such that $\|g\|_{L^2}\leq 1$ and define $u^\varepsilon =A_\varepsilon^{-1}g$ for $\varepsilon\in (0,\varepsilon_0]$ and $u^0=Pg/(\lambda+V_0)$. Now the result follows from Lemma \ref{Rate_of_convergence1}. 
\end{proof}

We are now in position to state the main result in this section.
 
\begin{theorem}\label{estimate_large_diffusion_shadowing}
There is an one dimensional invariant manifold $\mathcal{M}_\varepsilon$ for \eqref{semilinear_problem1} such that $\mathcal{A}_\varepsilon\subset \mathcal{M}_\varepsilon$ and the flow on $\mathcal{A}_\varepsilon$ can be reduced to an ordinary differential equation. Moreover the convergence of attractors of \eqref{nonlinear_semigroup} can be estimate by   
$$
\tn{d}_\varepsilon(\mathcal{A}_\varepsilon,\mathcal{A}_0)\leq C(\tau(\varepsilon)+p(\varepsilon)^{-\frac{1}{2}}).
$$
\end{theorem}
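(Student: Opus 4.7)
The plan is to cast \eqref{perturbed_problem1}--\eqref{limit_problem1} into the abstract framework of Sections~\ref{Functional Setting}--\ref{Shadowing Theory and Rate of Convergence} and invoke Theorem~\ref{main_result_theorem_estimate} directly. The identifications are $X_\varepsilon=L^2(\Omega)$, $X_0=L^2_\Omega$, $\alpha=\tfrac{1}{2}$, with $A_\varepsilon$ as defined above and $A_0 u=(\lambda+V_0)u$, so $n=\tn{dim}\,X_0=1$. The connecting maps are the natural inclusion $E_\varepsilon:X_0\hookrightarrow X_\varepsilon^{1/2}$ (every constant lies in $H^1(\Omega)$) and the spatial average $M_\varepsilon=P$ from~\eqref{projection_average}. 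Each of the structural conditions of Section~\ref{Compact Convergence} is immediate: $PE_\varepsilon=I_{X_0}$ since the average of a constant equals the constant, $\|E_\varepsilon c\|_{X_\varepsilon}=\|c\|_{X_0}$ identically, and the operator norms of $E_\varepsilon$ and $P$ are $\varepsilon$-independent.

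Next I would verify the compact convergence $A_\varepsilon^{-1}\overset{\tn{CC}}{\longrightarrow}A_0^{-1}$ and the quantitative resolvent estimate~\eqref{rate_resolvent}. Both are essentially contained in the corollary preceding the statement: the bound
$$\|A_\varepsilon^{-1}-E_\varepsilon A_0^{-1}M_\varepsilon\|_{\LL(L^2,X_\varepsilon^{1/2})}\leq C\bigl(\tau(\varepsilon)+p(\varepsilon)^{-1/2}\bigr),$$
combined with the compactness of each $A_\varepsilon^{-1}$, yields compact convergence by a standard approximation argument, and it identifies the abstract parameter $\tau(\varepsilon)$ with $\tau(\varepsilon)+p(\varepsilon)^{-1/2}$. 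Since the nonlinearity $f$ is independent of $\varepsilon$, condition~\eqref{rate_nonlinearity} reduces to the local Lipschitz inequality $\|f(u^\varepsilon)-f(E_\varepsilon u^0)\|_{L^2}\leq C\|u^\varepsilon-E_\varepsilon u^0\|_{X_\varepsilon^{1/2}}$, which follows from the growth hypotheses~(i) together with Sobolev embedding on the (uniformly) bounded attractors, so one may take $\rho(\varepsilon)\equiv 0$.

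With these hypotheses in place, together with the Morse--Smale structure of $T_0$ (trivial because the one-dimensional flow has only hyperbolic equilibria and no non-equilibrium recurrence), Theorem~\ref{estimate_invariant_manifold} yields the one-dimensional Lipschitz invariant manifold $\mathcal{M}_\varepsilon$ with $\mathcal{A}_\varepsilon\subset\mathcal{M}_\varepsilon$, on which the reduced flow is the scalar ODE~\eqref{reducted_solution_edo_reaction_diffusion}. Theorem~\ref{main_result_theorem_estimate} then delivers $\tn{d}_\varepsilon(\mathcal{A}_\varepsilon,E_\varepsilon\mathcal{A}_0)\leq C(\tau(\varepsilon)+p(\varepsilon)^{-1/2})$, and since $E_\varepsilon$ is the natural inclusion and $\mathcal{A}_0=[x_*^{1,0},x_*^{m,0}]$ already lives in $X_\varepsilon^{1/2}$, the left-hand side coincides with $\tn{d}_\varepsilon(\mathcal{A}_\varepsilon,\mathcal{A}_0)$.

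The principal technical obstacle is the non-equivalence of $\|\cdot\|_{H^1}$ and $\|\cdot\|_{X_\varepsilon^{1/2}}$ flagged in Corollary~\ref{non_equivalence_norm1}: Nemitskii bounds for $f$ come out of Sobolev embedding in the $H^1$ norm, whereas the abstract theory requires control in $X_\varepsilon^{1/2}$ with constants independent of $\varepsilon$. Every such estimate must therefore be chained through the one-sided uniform embedding $X_\varepsilon^{1/2}\hookrightarrow H^1$, and the a priori $X_\varepsilon^{1/2}$-boundedness of the attractors must be exploited to absorb the higher-order Nemitskii terms arising from the growth condition on $f$.
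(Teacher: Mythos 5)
Your proof follows the paper's strategy exactly: identify $X_\varepsilon=L^2(\Omega)$, $X_\varepsilon^\alpha=X_\varepsilon^{1/2}$, $X_0=L^2_\Omega$ (hence $n=1$), take $E_\varepsilon$ to be the inclusion and $M_\varepsilon=P$, check the abstract hypotheses of Sections~\ref{Functional Setting}--\ref{Compact Convergence} via the resolvent corollary, and then invoke Theorems~\ref{estimate_invariant_manifold} and~\ref{main_result_theorem_estimate}. The only difference is that you spell out the routine verifications (the conditions on $E_\varepsilon,M_\varepsilon$, the identification of the abstract $\tau$ with $\tau(\varepsilon)+p(\varepsilon)^{-1/2}$, taking $\rho\equiv 0$, and the uniform embedding $X_\varepsilon^{1/2}\hookrightarrow H^1$) that the paper's proof leaves implicit, so the argument is correct and essentially identical.
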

\begin{proof}
If we define $X_0=\R=X_0^\frac{1}{2}$, $X_\varepsilon^\alpha=X_\varepsilon^\frac{1}{2}$ and $X_\varepsilon=L^2(\Omega)$, $\varepsilon\in (0,\varepsilon_0]$, we have $\{A_\varepsilon\}_{\varepsilon\in (0,\varepsilon_0]}$ a family of operators satisfying the assumptions of the Sections \ref{Functional Setting} and \ref{Compact Convergence}, where $A_0$ has spectrum set given by $\sigma(A_0)=\{\lambda+V_0\}$, $\lambda+V_0>0$ and $A_\varepsilon$ has spectrum set given by $\sigma(A_\varepsilon)=\{\lambda_1^\varepsilon,\lambda_2^\varepsilon, \dots\}$, $\lambda_1^\varepsilon\geq m_0>0$. Note that we have $\tn{dim}(X_0)=1$ which implies the limiting ODE \eqref{limit_problem1} one dimensional. Finally we define $E_\varepsilon$ by the inclusion $X_0^\frac{1}{2}\subset X_\varepsilon^\frac{1}{2}$ and $M_\varepsilon=P$, where $P$ is defined in \eqref{projection_average}. Hence the assumptions of the Section \ref{Rate of Convergence} and \ref{Shadowing Theory and Rate of Convergence} are satisfied. The proof is completed.
\cqd
\end{proof}

Due to the simplicity of the dynamics of the Problem \ref{perturbed_problem1} we can prove the Theorem \ref{estimate_large_diffusion_shadowing} without using the Theorem \ref{Shadowing_reaction_diffusion}, that is, without Shadowing Theory. In fact, we explore the geometry offer by the finite dimension. This geometric argument that motivated us to investigate the rate of convergence of attractors for problems which the asymptotic behavior can be described by a system of ordinary differential equation.

First we estimate the convergence of equilibrium points.
\begin{theorem}
Let $u_*^0\in\mathcal{E}_0$. Then for $\varepsilon$ sufficiently small (we still denote $\varepsilon\in (0,\varepsilon_0]$), there is $\delta>0$ such that the equation $A_\varepsilon u-f(u)=0$ has the only solution $u_*^\varepsilon\in \{u\in X_\varepsilon^\frac{1}{2}\,;\,\|u-u_*^0\|_{X_\varepsilon^\frac{1}{2}}\leq \delta\}$. Moreover
\begin{equation}\label{rate_of_equilibrium}
\|u_*^\varepsilon-u_*^0\|_{X_\varepsilon^\frac{1}{2}}\leq C(\tau(\varepsilon)+p(\varepsilon)^{-\frac{1}{2}}).
\end{equation} 
\end{theorem}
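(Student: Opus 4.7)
The plan is to apply a Banach fixed point argument after linearizing the equilibrium equation at $u_*^0$. Set $\beta := f'(u_*^0) \in \R$ (a scalar, since $u_*^0$ is a constant function), and write the unknown as $u = u_*^0 + w$ with $w \in X_\varepsilon^{\frac{1}{2}}$. Using $f(u_*^0) = A_0 u_*^0$, the equation $A_\varepsilon u - f(u) = 0$ rearranges to
$$L_\varepsilon w = (A_0 - A_\varepsilon) u_*^0 + R(w),$$
where $L_\varepsilon := A_\varepsilon - \beta I$ and $R(w) := f(u_*^0 + w) - f(u_*^0) - \beta w$ is the Taylor remainder. Crucially, because $u_*^0$ is constant we get $(A_0 - A_\varepsilon)u_*^0 = (V_0 - V_\varepsilon) u_*^0$, so this term has $L^p$-norm bounded by $C\tau(\varepsilon)$.

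Next I would establish the uniform invertibility $\|L_\varepsilon^{-1}\|_{\LL(L^2, X_\varepsilon^{\frac{1}{2}})} \leq C$. Hyperbolicity gives $\beta \neq \lambda + V_0$, so $\beta \in \rho(A_0)$. The spectral analysis underlying Theorem \ref{estimate_large_diffusion_shadowing} (the specialization of Corollary \ref{gap_condition} to this setting) shows $\lambda_1^\varepsilon \to \lambda + V_0$ while $\lambda_j^\varepsilon \to \infty$ for $j \geq 2$; hence $\tn{dist}(\beta, \sigma(A_\varepsilon)) \geq c > 0$ uniformly for small $\varepsilon$. Self-adjoint functional calculus then yields the claimed uniform bound from $L^2$ to $X_\varepsilon^{\frac{1}{2}}$. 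Similarly, the $C^1$ smoothness of $f$ together with the growth conditions (i) and the uniform embedding $X_\varepsilon^{\frac{1}{2}} \subset H^1(\Omega)$ give a Taylor estimate $\|R(w)\|_{L^2} \leq \eta(\|w\|_{X_\varepsilon^{\frac{1}{2}}}) \|w\|_{X_\varepsilon^{\frac{1}{2}}}$ with $\eta(\delta) \to 0$ as $\delta \to 0$.

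Now define $\Phi_\varepsilon(w) := L_\varepsilon^{-1}\bigl[(A_0 - A_\varepsilon)u_*^0 + R(w)\bigr]$. From the estimates above,
$$\|\Phi_\varepsilon(w)\|_{X_\varepsilon^{\frac{1}{2}}} \leq C\tau(\varepsilon) + C\eta(\|w\|_{X_\varepsilon^{\frac{1}{2}}})\|w\|_{X_\varepsilon^{\frac{1}{2}}},$$
and a similar computation shows $\Phi_\varepsilon$ is a $\frac{1}{2}$-contraction on the ball $\{w : \|w\|_{X_\varepsilon^{\frac{1}{2}}} \leq \delta\}$ provided $\delta$ and $\varepsilon$ are small. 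Banach's theorem produces a unique fixed point $w_\varepsilon$, and $u_*^\varepsilon := u_*^0 + w_\varepsilon$ is the claimed equilibrium. Plugging $w_\varepsilon$ into the fixed-point identity and absorbing the small term $C\eta(\|w_\varepsilon\|)\|w_\varepsilon\|$ into the left-hand side gives $\|w_\varepsilon\|_{X_\varepsilon^{\frac{1}{2}}} \leq C\tau(\varepsilon) \leq C(\tau(\varepsilon) + p(\varepsilon)^{-\frac{1}{2}})$, which is \eqref{rate_of_equilibrium}.

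The main obstacle is the uniform bound on $L_\varepsilon^{-1}: L^2 \to X_\varepsilon^{\frac{1}{2}}$, since the norms on $X_\varepsilon^{\frac{1}{2}}$ are not equivalent uniformly in $\varepsilon$ (Corollary \ref{non_equivalence_norm1}); one must exploit the quantitative spectral separation between $\beta$ and $\sigma(A_\varepsilon)$. A secondary technical point is the case $n = 1$, where $V_\varepsilon \to V_0$ only in $L^1$ and hence $(A_0 - A_\varepsilon)u_*^0$ is not in $L^2$; there one argues via the weak formulation of $L_\varepsilon w = (V_0 - V_\varepsilon)u_*^0$, using the uniform embedding $H^1(\Omega) \subset L^\infty(\Omega)$ to pair the right-hand side with test functions and still recover $\|w\|_{X_\varepsilon^{\frac{1}{2}}} \leq C\tau(\varepsilon)$.
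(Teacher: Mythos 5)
Your proposal is correct and takes a genuinely different route from the paper's. The paper represents both equilibria as fixed points built from $L_\varepsilon^{-1}$ and $L_0^{-1}P$, where $L_\varepsilon = A_\varepsilon - f'(u_*^0)$, subtracts, and controls the difference through a resolvent identity that reduces everything to $\|A_\varepsilon^{-1} - A_0^{-1}P\|_{\LL(L^2, X_\varepsilon^{\frac{1}{2}})} \leq C(\tau(\varepsilon) + p(\varepsilon)^{-\frac{1}{2}})$; the $p(\varepsilon)^{-\frac{1}{2}}$ contribution therefore unavoidably appears in the final bound. You instead linearize the equilibrium equation directly at $u_*^0$, so all the $\varepsilon$-dependence is funneled into the single forcing term $(A_0-A_\varepsilon)u_*^0$, and your key observation --- that $u_*^0$ is spatially constant and hence the diffusive part of $A_\varepsilon$ annihilates it --- reduces this term to $(V_0 - V_\varepsilon)u_*^0$, which is controlled by $\tau(\varepsilon)$ alone. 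You then only need uniform invertibility of $L_\varepsilon$, obtained from hyperbolicity and the spectral gap by functional calculus, and never have to invoke the resolvent convergence estimate. The upshot is the sharper bound $\|u_*^\varepsilon - u_*^0\|_{X_\varepsilon^{\frac{1}{2}}} \leq C\tau(\varepsilon)$, which of course implies the stated $C(\tau(\varepsilon)+p(\varepsilon)^{-\frac{1}{2}})$. Your scheme also delivers the existence and local uniqueness of $u_*^\varepsilon$ as part of the same Banach fixed-point argument, whereas the paper cites that portion to the references and only derives the rate. Your closing caveats --- the $n=1$ case where $V_\varepsilon - V_0$ may only lie in $L^1$ (resolved through the weak formulation and the uniform embedding $H^1\subset L^\infty$), and the non-uniform equivalence of the $X_\varepsilon^{\frac{1}{2}}$ and $H^1$ norms --- are exactly the right technical points to flag.
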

\begin{proof}
The proof is the same as given in \cite{Arrieta} and \cite{Carbone2008}. Here we just need to proof the estimates \eqref{rate_of_equilibrium}. We have $u_*^\varepsilon$ and $u_*^0$ given by 
$$
u_*^0=(A_0+V_0)^{-1}[f(u_*^0)+V_0u_*^0]\quad\tn{and}\quad u_*^\varepsilon=(A_\varepsilon+V_0)^{-1}[f(u_*^\varepsilon)+V_0 u_*^\varepsilon],
$$ 
where $V_0=-f'(u_*^0)$. Thus
\begin{align*}
\|u_*^\varepsilon-u_*^0\|_{X_\varepsilon^\frac{1}{2}}&\leq \|(A_\varepsilon+V_0)^{-1}[f(u_*^\varepsilon)+V_0 u_*^\varepsilon]-(A_0+V_0)^{-1}[f(u_*^0)+V_0u_*^0]\|_{X_\varepsilon^\frac{1}{2}}\\
&\leq \|[(A_\varepsilon+V_0)^{-1}-(A_0+V_0)^{-1}P][f(u_*^\varepsilon)+V_0 u_*^\varepsilon]\|_{X_\varepsilon^\frac{1}{2}}\\
&+\|(A_0+V_0)^{-1}P[f(u_*^\varepsilon)-f(u_*^0)+V_0(u_*^\varepsilon-u_*^0)]\|_{X_\varepsilon^\frac{1}{2}}.
\end{align*}
We have the following equality 
$$
(A_\varepsilon+V_0)^{-1}-(A_0+V_0)^{-1}P=[I-(A_\varepsilon+V_0)^{-1}V_0](A_\varepsilon^{-1}-A_0^{-1}P)[I-V_0(A_0+V_0)^{-1}].
$$
And then
$
\|[(A_\varepsilon+V_0)^{-1}-(A_0+V_0)^{-1}P][f(u_*^\varepsilon)+V_0 u_*^\varepsilon]\|_{X_\varepsilon^\frac{1}{2}}\leq C(\tau(\varepsilon)+p(\varepsilon)^{-\frac{1}{2}}).
$

If we denote $z^\varepsilon=f(u_*^\varepsilon)-f(u_*^0)+V_0(u_*^\varepsilon-u_*^0)$, since $f$ is continuously differentiable, for all $\delta>0$ there is $\varepsilon$ sufficiently small such that 
$
\|z^\varepsilon\|_{X_\varepsilon^\frac{1}{2}}\leq \delta\|u_*^\varepsilon-u_*^0\|_{X_\varepsilon^\frac{1}{2}}.
$
Thus
$$
\|(A_0+V_0)^{-1}Pz^\varepsilon\|_{X_\varepsilon^\frac{1}{2}}\leq \delta \|(A_0+V_0)^{-1}P\|_{\LL(L^2,X_\varepsilon^\frac{1}{2})}\|u_*^\varepsilon-u_*^0\|_{X_\varepsilon^\frac{1}{2}}. 
$$
We choice $\delta$ sufficiently small such that $\delta\|(A_0+V_0)^{-1}P\|_{\LL(L^2,X_\varepsilon^\frac{1}{2})}\leq\frac{1}{2}$, and then
$$
\|u_*^\varepsilon-u_*^0\|_{X_\varepsilon^\frac{1}{2}}\leq C(\tau(\varepsilon)+p(\varepsilon)^{-\frac{1}{2}})+\frac{1}{2}\|u_*^\varepsilon-u_*^0\|_{X_\varepsilon^\frac{1}{2}}.
$$
\cqd
\end{proof}

We now give a geometric prove of the Theorem \ref{estimate_large_diffusion_shadowing}.

\begin{figure}[h]
\begin{center}
\includegraphics[scale=0.60]{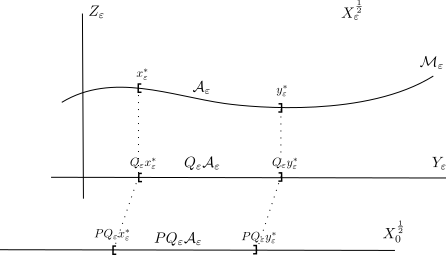} 
\caption{Geometry of the phase space}
\end{center}
\end{figure}

\begin{proof} By triangle inequality, we have
\begin{align*}
\tn{d}_\varepsilon(\mathcal{A}_\varepsilon,\mathcal{A}_0)&\leq \tn{d}_\varepsilon(\mathcal{A}_\varepsilon,Q_\varepsilon\mathcal{A}_\varepsilon)+
\tn{d}_\varepsilon(Q_\varepsilon\mathcal{A}_\varepsilon,\mathcal{A}_0)\\
& \leq \tn{d}_\varepsilon(\mathcal{A}_\varepsilon,Q_\varepsilon\mathcal{A}_\varepsilon)+
\tn{d}_\varepsilon(Q_\varepsilon\mathcal{A}_\varepsilon,PQ_\varepsilon\mathcal{A}_\varepsilon)+
\tn{d}_\varepsilon(PQ_\varepsilon\mathcal{A}_\varepsilon,\mathcal{A}_0).
\end{align*}

We estimate each part. 

Let $z\in\mathcal{A}_\varepsilon$, since $\mathcal{A}_\varepsilon\subset\mathcal{M}_\varepsilon$, $z=z^\varepsilon+s_*^\varepsilon(z^\varepsilon)$ for some $z^\varepsilon\in Q_\varepsilon\mathcal{A}_\varepsilon$, then
\begin{align*}
\tn{dist}(z,Q_\varepsilon\mathcal{A}_\varepsilon)&=\inf_{x\in Q_\varepsilon(\bar{\lambda})\mathcal{A}_\varepsilon}\|z-x\|_{X_\varepsilon^\frac{1}{2}}\leq \|z^\varepsilon+s_*^\varepsilon(z^\varepsilon)-z^\varepsilon\|_{X_\varepsilon^\frac{1}{2}}=\|s_*^\varepsilon(z^\varepsilon)\|_{X_\varepsilon^\frac{1}{2}}\\
&\leq |\!|\!|s_*^\varepsilon|\!|\!|\leq C(\tau(\varepsilon)+p(\varepsilon)^{-\frac{1}{2}}),
\end{align*} 
\begin{align*}
\tn{dist}(z^\varepsilon,\mathcal{A}_\varepsilon)&=\inf_{x\in \mathcal{A}_\varepsilon}\|z^\varepsilon
-x\|_{X_\varepsilon^\frac{1}{2}}\leq \|z^\varepsilon-(z^\varepsilon+s_*^\varepsilon(z^\varepsilon))\|_{X_\varepsilon^\frac{1}{2}}=\|s_*^\varepsilon(z^\varepsilon)\|_{X_\varepsilon^\frac{1}{2}}\\
&\leq |\!|\!|s_*^\varepsilon|\!|\!|\leq C(\tau(\varepsilon)+p(\varepsilon)^{-\frac{1}{2}}),
\end{align*} 
which implies 
$$
\tn{d}_\varepsilon(\mathcal{A}_\varepsilon,Q_\varepsilon\mathcal{A}_\varepsilon)= \max\{\tn{dist}_\varepsilon(\mathcal{A}_\varepsilon,Q_\varepsilon\mathcal{A}_\varepsilon),
\tn{dist}_\varepsilon(Q_\varepsilon\mathcal{A}_\varepsilon,\mathcal{A}_\varepsilon)\}\leq C(\tau(\varepsilon)+p(\varepsilon)^{-\frac{1}{2}}).
$$
 
Let $z^\varepsilon\in Q_\varepsilon\mathcal{A}_\varepsilon$ then $z^\varepsilon=Q_\varepsilon w^\varepsilon$ for some $w^\varepsilon\in\mathcal{A}_\varepsilon$. Since $\bigcup_{\varepsilon\in[0,\varepsilon_0]}\mathcal{A}_\varepsilon$ is uniformly bounded in $X_\varepsilon^\frac{1}{2}$ we can assume $\|w^\varepsilon\|_{X_\varepsilon^\frac{1}{2}}\leq C$. Thus
\begin{align*}
\tn{dist}(z^\varepsilon,PQ_\varepsilon\mathcal{A}_\varepsilon)&=\inf_{x\in PQ_\varepsilon\mathcal{A}_\varepsilon}\|z^\varepsilon
-x\|_{X_\varepsilon^\frac{1}{2}}
\leq \|z^\varepsilon-Pz^\varepsilon\|_{X_\varepsilon^\frac{1}{2}}=\|Q_\varepsilon w^\varepsilon-PQ_\varepsilon w^\varepsilon\|_{X_\varepsilon^\frac{1}{2}}\\
&=\|(Q_\varepsilon-P)Q_\varepsilon w^\varepsilon\|_{X_\varepsilon^\frac{1}{2}}\leq C(\tau(\varepsilon)+p(\varepsilon)^{-\frac{1}{2}}),
\end{align*} 
\begin{align*}
\tn{dist}(Pz^\varepsilon,Q_\varepsilon\mathcal{A}_\varepsilon)&=\inf_{x\in Q_\varepsilon\mathcal{A}_\varepsilon}\|Pz^\varepsilon
-x\|_{X_\varepsilon^\frac{1}{2}}
\leq \|Pz^\varepsilon-z^\varepsilon\|_{X_\varepsilon^\frac{1}{2}}=\|PQ_\varepsilon w^\varepsilon-Q_\varepsilon w^\varepsilon\|_{X_\varepsilon^\frac{1}{2}}\\
&=\|(P-Q_\varepsilon) Q_\varepsilon w^\varepsilon\|_{X_\varepsilon^\frac{1}{2}}\leq C(\tau(\varepsilon)+p(\varepsilon)^{-\frac{1}{2}}),
\end{align*} 
which implies 
\begin{align*}
\tn{d}_\varepsilon(Q_\varepsilon\mathcal{A}_\varepsilon,PQ_\varepsilon\mathcal{A}_\varepsilon)&=\max\{\tn{dist}_\varepsilon(Q_\varepsilon\mathcal{A}_\varepsilon,PQ_\varepsilon\mathcal{A}_\varepsilon),
\tn{dist}_\varepsilon(PQ_\varepsilon\mathcal{A}_\varepsilon,Q_\varepsilon\mathcal{A}_\varepsilon)\}\\
&\leq C(\tau(\varepsilon)+p(\varepsilon)^{-\frac{1}{2}}).
\end{align*}

Finally we have $\mathcal{A}_0=[x_*^{1,0},x_*^{m,0}]$ and $T_0(\cdot)$ is a Morse-Smale semigroup. Since Morse-Smale semigroup are stable (see \cite{Bortolan}) we can assume $\mathcal{A}_\varepsilon=[x_*^{1,\varepsilon},x_*^{m,\varepsilon}]$ which implies $Q_\varepsilon\mathcal{A}_\varepsilon=[Q_\varepsilon x_*^{1,\varepsilon},Q_\varepsilon x_*^{m,\varepsilon}]$ and $PQ_\varepsilon \mathcal{A}_\varepsilon=[PQ_\varepsilon x_*^{1,\varepsilon},PQ_\varepsilon x_*^{m,\varepsilon}]$. Without loss of generality we can assume $PQ_\varepsilon x_*^{1,\varepsilon}\leq x_*^{1,0}$ and $PQ_\varepsilon x_*^{m,\varepsilon}\leq x_*^{m,0}$. Thus
$$
\tn{d}_\varepsilon(PQ_\varepsilon\mathcal{A}_\varepsilon,\mathcal{A}_0)=\|PQ_\varepsilon x_*^{1,\varepsilon}- x_*^{1,0}\|_{X_\varepsilon^\frac{1}{2}}+\|PQ_\varepsilon x_*^{m,\varepsilon}- x_*^{m,0}\|_{X_\varepsilon^\frac{1}{2}}.
$$
But
\begin{align*}
\|PQ_\varepsilon x_*^{1,\varepsilon}- x_*^{1,0}\|_{X_\varepsilon^\frac{1}{2}}&=\|PQ_\varepsilon x_*^{1,\varepsilon}- Px_*^{1,0}\|_{X_\varepsilon^\frac{1}{2}}\leq \|P\|_{\LL(L^2)}\|Q_\varepsilon x_*^{1,\varepsilon}- Px_*^{1,0}\|_{X_\varepsilon^\frac{1}{2}}\\
&\leq \|P\|_{\LL(L^2)}\|Q_\varepsilon x_*^{1,\varepsilon}- Px_*^{1,\varepsilon}\|_{X_\varepsilon^\frac{1}{2}}+\|P\|_{\LL(L^2)}\|Px_*^{1,\varepsilon}-Px_*^{1,0}\|_{X_\varepsilon^\frac{1}{2}}\\
&\leq C(\tau(\varepsilon)+p(\varepsilon)^{-\frac{1}{2}}).
\end{align*}

In the same way
$
\|PQ_\varepsilon x_*^{m,\varepsilon}- x_*^{m,0}\|_{X_\varepsilon^\frac{1}{2}}\leq C(\tau(\varepsilon)+p(\varepsilon)^{-\frac{1}{2}}).
$
\cqd
\end{proof}

\section{large diffusion except in a neighborhood of a point}\label{A cell tissue reaction-diffusion problem}
In this section we consider a class of diffusion coefficients $p_\varepsilon\in C^1([0,1])$ that are large except in a neighborhood of a point where they become small. This kind of problem has been studied in the works \cite{Carvalho1994} and \cite{Fusco1987} where it has proved that the dynamic is dictated by a ordinary differential equation. We obtain a rate of convergence that enable us knowing how fast the dynamic approaches one ordinary differential equation when the parameter $\varepsilon$ varies.  

Consider the scalar parabolic problem 
\begin{equation}\label{scalar_problem}
\begin{cases}u^\varepsilon_t-(p_\varepsilon(x) u^\varepsilon_x)_x+\lambda u^\varepsilon=f(u^\varepsilon),\quad 0<x<1,\,t>0,\\u^\varepsilon(0)=0=u^\varepsilon(1),\quad t>0,       \end{cases}
\end{equation}
where $\varepsilon\in (0,\varepsilon_0]$, $0<\varepsilon_0\leq 1$, $\lambda>0$ and $f:\R\to\R$ is continuously differentiable and satisfies the following dissipativeness condition
\begin{equation}\label{dissipativeness_condition}
\limsup_{|u|\to \infty}\frac{f(u)}{u}\leq -r,
\end{equation}
for some $r>0$. Assume that the diffusion is large except in a neighborhood of a point where it becomes small, that is, $p_\varepsilon\in C^2([0,1])$ and for positive constants $e_1,e_2, l_1$ and $a_1$, is valid
$$
\begin{cases} p_\varepsilon(x)\geq\frac{e_1}{\varepsilon},\quad x\in [0,x_1-\varepsilon l_1'],\\ p_\varepsilon(x)\geq \frac{e_2}{\varepsilon},\quad x\in[x_1+\varepsilon l_1',1],\\p_\varepsilon(x)\geq \varepsilon a_1,\quad x\in [x_1-\varepsilon l_1',x_1+\varepsilon l_1'],\\ p_\varepsilon(x)\leq \varepsilon a_1'\quad x\in [x_1-\varepsilon l_1,x_1+\varepsilon l_1],  \end{cases}
$$
where $0<x_1<1$ and $l_1',a_1'$ are functions of $\varepsilon$ that approach $l_1,a_1$ respectively from above as $\varepsilon\to 0$ (see Fig \eqref{diffusion1}).

\begin{figure}[h]
\begin{center}
\includegraphics[scale=0.7]{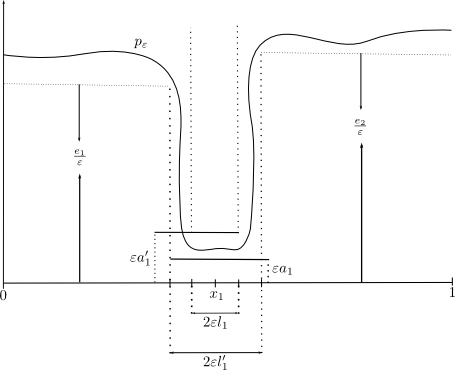} 
\caption{Diffusion}\label{diffusion1}
\end{center}
\end{figure}

We follow \cite{Carvalho1994} and \cite{Fusco1987} in order to determine the asymptotic behavior of \eqref{scalar_problem}. It is expect that the solutions converge to a constant (possibly different) on each of the interval $(0,x_1)$ and $(x_1,1)$, which implies that the equation \eqref{scalar_problem} has the same dynamic as an ordinary differential equation in $\R^2$. 

We start by defining the operator $A_\varepsilon:D(A_\varepsilon)\subset L^2(0,1)\to L^2(0,1)$ by
$$
\mathcal{D}(A_\varepsilon)=\{u\in H^2(0,1)\,;\, u_x(0)=u_x(1)=0\}\quad\tn{and}\quad A_\varepsilon u=-(p_\varepsilon u_x)_x+\lambda u.
$$
We have $A_\varepsilon$ is a positive, self-adjoint and invertible operator with compact resolvent for each $\varepsilon\in (0,\varepsilon_0]$. Thus, we can define, in the usual way (see\cite{Henry1980}), the fractional power space $X_\varepsilon^\frac{1}{2}=H^1(0,1)$, $\varepsilon\in (0,\varepsilon_0]$.

The proof of the next Lemma follows the same arguments in the Section 2 in \cite{Carvalho1994}.

\begin{lemma}
Let $\sigma(A_\varepsilon)=\{\lambda,\lambda_2^\varepsilon,\lambda_3^\varepsilon,...\}$ be the sequence of eigenvalues of operator $A_\varepsilon$ and let $\{1,\varphi_2^\varepsilon,\varphi_3^\varepsilon,...\}$ be the corresponding sequence of normalized eigenfunctions. Then
$$
\lambda_2^\varepsilon\overset{\varepsilon\to 0}\longrightarrow \frac{a_1}{2l_1x_1(1-x_1)}+\lambda,\quad \lambda_j^\varepsilon\overset{\varepsilon\to 0}\longrightarrow\infty,\,\,\,j\geq 3,
$$
and
$$
\varphi_2^\varepsilon\overset{\varepsilon\to 0}\longrightarrow \begin{cases} -\sqrt{\frac{1-x_1}{x_1}}\,\,x\in (0,x_1),\\ \sqrt{\frac{x_1}{1-x_1}},\,\,x\in (x_1,1).  \end{cases}
$$
We can decompose the fractional power space $X_\varepsilon^\frac{1}{2}=\tn{span}[1,\varphi_2^\varepsilon]\oplus(\tn{span}[1,\varphi_2^\varepsilon])^\perp$ which implies that the limiting problem of \eqref{scalar_problem} is given by following ordinary differential equation in $\R^2$
\begin{equation}\label{limiting_problem_matrix}
\begin{bmatrix}
\dot{u}_1^0 \\
\dot{u}_2^0
\end{bmatrix}+
\begin{bmatrix}
\frac{a_1}{2l_1 x_1}+\lambda & -\frac{a_1}{2l_1x_1} \\
-\frac{a_1}{2l_1(1-x_1)} & \frac{a_1}{2l_1 (1-x_1)}+\lambda
\end{bmatrix}
\begin{bmatrix}
u_1^0 \\
u_2^0
\end{bmatrix}=
\begin{bmatrix}
f(u_1^0) \\
f(u_2^0)
\end{bmatrix}.
\end{equation}
\end{lemma}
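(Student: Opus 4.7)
The plan is to analyze the Neumann eigenvalue problem $-(p_\varepsilon\varphi_x)_x+\lambda\varphi=\mu\varphi$ via the Courant--Fischer variational principle, exploiting the large/small diffusion structure of $p_\varepsilon$. Since constants are annihilated by the operator $-(p_\varepsilon(\cdot)_x)_x$, the pair $(\lambda,1)$ is trivially the first eigen-pair, and by orthogonality
\[
\lambda_2^\varepsilon=\min\Big\{\frac{\int_0^1 p_\varepsilon|\varphi_x|^2\,dx+\lambda\int_0^1|\varphi|^2\,dx}{\int_0^1|\varphi|^2\,dx}\,:\,\varphi\in H^1,\ \int_0^1\varphi\,dx=0,\ \varphi\not\equiv 0\Big\}.
\]

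For the upper bound on $\lambda_2^\varepsilon$, I would use an explicit test function. Let $a=-\sqrt{(1-x_1)/x_1}$ and $b=\sqrt{x_1/(1-x_1)}$, the unique (up to sign) pair forced by the orthogonality $a x_1+b(1-x_1)=0$ and normalization $a^2 x_1+b^2(1-x_1)=1$; note $b-a=1/\sqrt{x_1(1-x_1)}$. Define $\psi_\varepsilon$ to equal $a$ on $[0,x_1-\varepsilon l_1]$, to equal $b$ on $[x_1+\varepsilon l_1,1]$, and to interpolate linearly across $[x_1-\varepsilon l_1,x_1+\varepsilon l_1]$, where by hypothesis $p_\varepsilon\leq \varepsilon a_1'$. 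Then a direct computation gives
\[
\int_0^1 p_\varepsilon|\psi_\varepsilon'|^2\,dx\leq 2\varepsilon l_1\cdot \varepsilon a_1'\cdot\frac{(b-a)^2}{(2\varepsilon l_1)^2}=\frac{a_1'(b-a)^2}{2 l_1}\longrightarrow\frac{a_1}{2l_1 x_1(1-x_1)},
\]
together with $\|\psi_\varepsilon\|_{L^2}\to 1$ and $\int\psi_\varepsilon\,dx\to 0$ (correctable to zero by an $O(\varepsilon)$ additive constant), yielding $\limsup_{\varepsilon\to 0}\lambda_2^\varepsilon\leq \frac{a_1}{2 l_1 x_1(1-x_1)}+\lambda$.

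For the matching lower bound and identification of the limit, I would take any subsequence $\lambda_2^{\varepsilon_k}$ convergent to some $\ell<\infty$ with corresponding normalized eigenfunctions $\varphi_2^{\varepsilon_k}$. The uniform energy bound combined with $p_\varepsilon\geq e_1/\varepsilon$ on $[0,x_1-\varepsilon l_1']$ and $p_\varepsilon\geq e_2/\varepsilon$ on $[x_1+\varepsilon l_1',1]$ forces $\int|(\varphi_2^{\varepsilon_k})_x|^2\,dx\to 0$ on each of these intervals, so Rellich compactness yields an $L^2$-limit $\varphi_2^0$ that is constant on $(0,x_1)$ and on $(x_1,1)$; orthogonality to constants together with unit $L^2$-norm pin the two values to $(a,b)$ up to sign, giving the claimed limit for $\varphi_2^\varepsilon$. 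Rescaling $y=(x-x_1)/\varepsilon$ on $[x_1-\varepsilon l_1,x_1+\varepsilon l_1]$ and using $\varepsilon^{-1} p_\varepsilon\to a_1$ there, a lower semicontinuity argument on the fixed-scale functional $v\mapsto\int_{-l_1}^{l_1} a_1|v'|^2\,dy$ with boundary values $a$ and $b$ (whose minimum is the linear interpolant with energy $a_1(b-a)^2/(2l_1)$) matches the upper bound, so $\ell=\frac{a_1}{2 l_1 x_1(1-x_1)}+\lambda$. For $\lambda_j^\varepsilon$ with $j\geq 3$, if any subsequence stayed bounded, the same compactness argument would produce $j-1\geq 2$ mutually orthonormal limits in the two-dimensional space of piecewise constants on $(0,x_1)\cup(x_1,1)$, a contradiction; hence $\lambda_j^\varepsilon\to\infty$. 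The decomposition $X_\varepsilon^{1/2}=\mathrm{span}[1,\varphi_2^\varepsilon]\oplus(\mathrm{span}[1,\varphi_2^\varepsilon])^\perp$ then follows from spectral theory, and the reduced system \eqref{limiting_problem_matrix} arises by projecting \eqref{scalar_problem} onto $\mathrm{span}[1,\varphi_2^\varepsilon]$ and taking $\varepsilon\to 0$; the matrix entries are computed by pairing $A_\varepsilon$ and $f$ against the limit profiles $1$ and the two-valued $\varphi_2^0$ on each subinterval.

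The step I expect to be hardest is the sharp lower bound in the transition window. Since the hypotheses only give an upper bound on $p_\varepsilon$ in $[x_1-\varepsilon l_1,x_1+\varepsilon l_1]$ and merely a lower bound $p_\varepsilon\geq\varepsilon a_1$ on the slightly larger interval $[x_1-\varepsilon l_1',x_1+\varepsilon l_1']$, one must verify that the minimizing eigenfunction does not place extra gradient energy in the intermediate layers $[x_1-\varepsilon l_1',x_1-\varepsilon l_1]\cup[x_1+\varepsilon l_1,x_1+\varepsilon l_1']$ where $p_\varepsilon$ is uncontrolled above. Carrying the rescaling/$\Gamma$-liminf argument through with the sharp constant $a_1/(2l_1 x_1(1-x_1))$ therefore requires a careful boundary-layer analysis showing that, in the rescaled variable, the admissible limit profile $v$ truly attains the endpoint values $a,b$ at $y=\pm l_1$ (and not inside the intermediate layers), which is the delicate point where the proof leans on the convergence $l_1'\to l_1$, $a_1'\to a_1$.
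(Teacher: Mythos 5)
The paper itself does not prove this lemma---it simply asserts that the argument ``follows the same arguments in Section 2 in \cite{Carvalho1994}''---so there is no in-paper proof to compare against. Your reconstruction is the standard variational route (Courant--Fischer on the mean-zero subspace, an explicit piecewise-linear test function for the upper bound, uniform energy bounds plus Rellich compactness for the lower bound, and a dimension-counting argument for $\lambda_j^\varepsilon\to\infty$, $j\geq 3$), which is exactly the kind of argument in the cited reference, and your numerology is correct (the choice of $a,b$, the identity $(b-a)^2=1/(x_1(1-x_1))$, the resulting upper bound $a_1'(b-a)^2/(2l_1)\to a_1/(2l_1 x_1(1-x_1))$, and the matrix in \eqref{limiting_problem_matrix} whose second eigenvalue is indeed $a_1/(2l_1 x_1(1-x_1))+\lambda$ with eigenvector direction $(a,b)$).

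One small but useful correction: the step you flag as hardest is actually easier than you fear, and does not require the rescaling/$\Gamma$-liminf apparatus. For the lower bound on $\lambda_2^\varepsilon$, one should run Cauchy--Schwarz on the \emph{whole} window $[x_1-\varepsilon l_1',\,x_1+\varepsilon l_1']$ where the only hypothesis needed is $p_\varepsilon\geq \varepsilon a_1$:
\[
\int_{x_1-\varepsilon l_1'}^{x_1+\varepsilon l_1'} p_\varepsilon\,|\varphi_x|^2\,dx
\;\geq\; \varepsilon a_1\,\frac{\bigl(\varphi(x_1+\varepsilon l_1')-\varphi(x_1-\varepsilon l_1')\bigr)^2}{2\varepsilon l_1'}
\;=\;\frac{a_1}{2 l_1'}\bigl(\varphi(x_1+\varepsilon l_1')-\varphi(x_1-\varepsilon l_1')\bigr)^2.
\]
The strong-diffusion bounds $p_\varepsilon\geq e_i/\varepsilon$ on the outer intervals together with a bounded energy force $\varphi$ to have $O(\varepsilon^{1/2})$ oscillation there, so the endpoint values converge to the constants $a,b$ determined by orthogonality and normalization, and since $l_1'\to l_1$ the right-hand side converges to $a_1(b-a)^2/(2l_1)=a_1/(2l_1 x_1(1-x_1))$. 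This matches your test-function upper bound, pins down $\lambda_2^\varepsilon\to a_1/(2l_1x_1(1-x_1))+\lambda$, and entirely sidesteps the intermediate layers $[x_1-\varepsilon l_1',x_1-\varepsilon l_1]\cup[x_1+\varepsilon l_1,x_1+\varepsilon l_1']$, because the lower bound $p_\varepsilon\geq\varepsilon a_1$ covers them as part of the big window and the potential extra gradient energy there only helps the inequality. With this simplification your sketch is a complete and correct argument, in line with the approach of \cite{Carvalho1994} and \cite{Fusco1987}.
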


In what follows we denote $X_0^\frac{1}{2}$ as $\R^2$ with the norm
$$
\|u^0\|_{X_0^\frac{1}{2}}^2=\langle u_1^0, u_2^0 \rangle_{X_0^\frac{1}{2}}=|u_1^0|^2x_1+|u^0_2|^2(1-x_1)
$$  
and we define the operator $A_0:X_0^\frac{1}{2}\to X_0^\frac{1}{2}$ given by
$$
A_0u^0=\begin{bmatrix}
\frac{a_1}{2l_1 x_1}+\lambda & -\frac{a_1}{2l_1x_1} \\
-\frac{a_1}{2l_1(1-x_1)} & \frac{a_1}{2l_1 (1-x_1)}+\lambda
\end{bmatrix}
\begin{bmatrix}
u_1^0 \\
u_2^0
\end{bmatrix}.
$$

If we denote the Nemitskii functional of $f$ by the same notation $f$, then \eqref{scalar_problem} and \eqref{limiting_problem_matrix} can be written as  
\begin{equation}\label{coupled_semilinear_problem}
\begin{cases}
u^\varepsilon_t+A_\varepsilon u^\varepsilon = f(u^\varepsilon), \\
u^\varepsilon(0)=u_0^\varepsilon \in X_\varepsilon^{\frac{1}{2}},\quad \varepsilon\in [0,\varepsilon_0].
\end{cases}
\end{equation}  
Since we have the condition \eqref{dissipativeness_condition} it follows from \cite{J.M.Arrieta1999,J.M.Arrieta2000,J.M.Arrieta2000a} and standard arguments of ordinary differential equations that we can consider $f$ bounded globally Lipschitz such that the problem \eqref{coupled_semilinear_problem}, for each $\varepsilon\in [0,\varepsilon_0]$, is well posed for positive time and the solutions are continuously differentiable with respect to the initial data.  Hence we are able to consider in $X_\varepsilon^{\frac{1}{2}}$ the family of nonlinear semigroups $\{T_\varepsilon(\cdot)\}_{\varepsilon\in [0,\varepsilon_0]}$ defined by $T_\varepsilon(t)=u^\varepsilon(t,u^\varepsilon_0)$, $t\geq 0$, where $u^\varepsilon(t,u^\varepsilon_0)$ is the solution of \eqref{coupled_semilinear_problem} through $u^\varepsilon_0\in X_\varepsilon^\frac{1}{2}$ and 

$$
T_\varepsilon(t)u_0^\varepsilon   = e^{-A_{\varepsilon}t}u_0^\varepsilon+\int_0^{t} e^{-A_\varepsilon (t-s)}f(T_\varepsilon(s))\,ds,\quad t\geq 0, 
$$
has a global attractor $\mathcal{A}_\varepsilon$, for each $\varepsilon\in (0,\varepsilon_0]$ such that $\overline{\bigcup_{\varepsilon\in (0,\varepsilon_0]}\mathcal{A}_\varepsilon}$ is compact and uniformly bounded. 

We assume that \eqref{coupled_semilinear_problem} with $\varepsilon=0$ has a global attractor $\mathcal{A}_0\subset X_0^\frac{1}{2}$ and $T_0(\cdot)$ is a Morse-Smale semigroup in $X_0^\frac{1}{2}.$ 

In order to understand the problems in the same space we define the operators 
$E_\varepsilon:\R^2\to X_\varepsilon^\frac{1}{2}$ and $M_\varepsilon:L^2\to\R^2$ by
$$
E_\varepsilon (u_1,u_2)=\begin{cases}u_1,\quad x\in [0,x_1-\varepsilon l_1],\\u_1+\frac{u_2-u_1}{\varepsilon 2 l_1}(x-x_1+\varepsilon l_1),\,\,x\in[x_1-\varepsilon l_1,x_1+\varepsilon l_1],   \\u_2,\quad x\in [x_1+\varepsilon l_1,1],\end{cases}
$$
$$
M_\varepsilon u=(u_1,u_2),\,\, u_1=\frac{1}{x_1-\varepsilon l_1}\int_0^{x_1-\varepsilon l_1}u\,dx,\,\, u_2=\frac{1}{1-(x_1+\varepsilon l_1)}\int_{x_1+\varepsilon l_1}^1 u\,dx. 
$$
The next lemma is essential to obtain the rate of convergence of the resolvent operators.

\begin{lemma}
For $g\in L^2$ with $\|g\|_{L^2}\leq 1$ and $\varepsilon\in (0,\varepsilon_0]$, let $u^\varepsilon$ be the solution of elliptic problem  $A_\varepsilon u^\varepsilon=g$
and let $u^0=(u_1^0,u_2^0)$ be the solution of $A_0 u^0=M_\varepsilon g$. Then there is a constant $C>0$, independent of $\varepsilon$, such that
\begin{equation}\label{convergence_resolvent0}
\|u^\varepsilon-E_\varepsilon u^0\|^2_{X_\varepsilon^\frac{1}{2}}\leq C(|a_1'-a_1|+|l_1'-l_1|+ \varepsilon^\frac{1}{2}).
\end{equation} 
\end{lemma}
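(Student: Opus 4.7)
The plan is to mimic the energy argument used in Lemma \ref{Rate_of_convergence1}, now with the two--dimensional limit space. Set $w = u^\varepsilon - E_\varepsilon u^0$. Testing the weak formulation of $A_\varepsilon u^\varepsilon = g$ against $\varphi = w$ and rearranging gives the key identity
$$
\|w\|_{X_\varepsilon^{1/2}}^2 = \int_0^1 g w\, dx - \int_0^1 p_\varepsilon (E_\varepsilon u^0)_x w_x\, dx - \lambda\int_0^1 (E_\varepsilon u^0)\, w\, dx. \quad (*)
$$
In parallel, the weak form of $A_0 u^0 = M_\varepsilon g$ tested against $v = M_\varepsilon w \in \R^2$ produces
$$
\frac{a_1}{2l_1}(u_1^0 - u_2^0)\bigl((M_\varepsilon w)_1 - (M_\varepsilon w)_2\bigr) + \lambda\bigl(u_1^0 (M_\varepsilon w)_1 x_1 + u_2^0 (M_\varepsilon w)_2 (1-x_1)\bigr) = (\text{averages of } g). \quad (**)
$$
The strategy is to split every integral in $(*)$ over the three natural subintervals $I_1 = [0, x_1-\varepsilon l_1]$, $I_m = [x_1 - \varepsilon l_1, x_1 + \varepsilon l_1]$, $I_2 = [x_1 + \varepsilon l_1, 1]$, and then subtract off $(**)$ to cancel the leading order terms.

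On the outer intervals $I_j$, the function $E_\varepsilon u^0$ is the constant $u_j^0$, so $\int_{I_j}(g - \lambda u_j^0) w\, dx$ splits as $(M_\varepsilon w)_j \int_{I_j}(g - \lambda u_j^0)\, dx + \int_{I_j}(g - \lambda u_j^0)(w - (M_\varepsilon w)_j)\, dx$. The averaged piece matches the corresponding scalar term in $(**)$ up to $O(\varepsilon)$ bookkeeping errors (from $x_1$ vs.\ $x_1 - \varepsilon l_1$, etc.). The oscillation piece is handled by Poincar\'e--Wirtinger on $I_j$: $\|w - (M_\varepsilon w)_j\|_{L^2(I_j)} \leq C\|w_x\|_{L^2(I_j)}$. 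Because $p_\varepsilon \geq e_j/\varepsilon$ on $I_j$, one has
$$
\|w_x\|_{L^2(I_j)}^2 \leq \frac{\varepsilon}{e_j}\int_{I_j} p_\varepsilon w_x^2\, dx \leq \frac{\varepsilon}{e_j}\|w\|_{X_\varepsilon^{1/2}}^2,
$$
so the oscillation contributions are bounded by $C\sqrt{\varepsilon}\,\|w\|_{X_\varepsilon^{1/2}}$.

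On the transition interval $I_m$ of length $2\varepsilon l_1$, the zeroth--order terms $\int_{I_m} gw\, dx$ and $\lambda \int_{I_m}(E_\varepsilon u^0) w\, dx$ are controlled by $\sqrt{|I_m|}\,\|g\|_{L^2}\,\|w\|_{L^\infty} \leq C\sqrt{\varepsilon}\,\|w\|_{X_\varepsilon^{1/2}}$, invoking the uniform embedding $H^1 \hookrightarrow L^\infty$ and $\|w\|_{H^1}\leq C\|w\|_{X_\varepsilon^{1/2}}$. The remaining energy term
$$
\int_{I_m} p_\varepsilon\,\frac{u_2^0 - u_1^0}{2\varepsilon l_1}\, w_x\, dx
$$
is the delicate one: after replacing $p_\varepsilon$ by its reference value $\varepsilon a_1$ (at cost $|a_1' - a_1|$) and $l_1'$ by $l_1$ in the normalizing denominator (at cost $|l_1' - l_1|$), the integral reduces to a boundary expression $\tfrac{a_1(u_2^0 - u_1^0)}{2 l_1}(w(x_1+\varepsilon l_1) - w(x_1 - \varepsilon l_1))$; this boundary term cancels the analogous jump that appears when collecting the matched pieces across the three intervals against $(**)$.

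The main obstacle is the bookkeeping of these cancellations: one must track that every mismatch between the $\mathbb{R}^2$--weighted inner product used for $(**)$ and the ambient $L^2$ structure on $[0,1]$ (for instance the $O(\varepsilon)$ differences $x_1 - (x_1 - \varepsilon l_1)$ in the normalizing lengths and the $|a_1'-a_1|$, $|l_1'-l_1|$ gaps in $p_\varepsilon$) ends up in the form (error)$\cdot \|w\|_{X_\varepsilon^{1/2}}$, with no stray terms left over. Once this is arranged, every error term in $(*) - (**)$ has the shape $C(|a_1'-a_1| + |l_1'-l_1| + \sqrt{\varepsilon})\|w\|_{X_\varepsilon^{1/2}}$, so Young's inequality absorbs the $\|w\|_{X_\varepsilon^{1/2}}$ factor into the left--hand side and delivers the squared bound \eqref{convergence_resolvent0}.
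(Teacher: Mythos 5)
The strategy you outline --- test the weak form of $A_\varepsilon u^\varepsilon=g$ against $w=u^\varepsilon-E_\varepsilon u^0$ and the weak form of $A_0 u^0=M_\varepsilon g$ against $M_\varepsilon w$, then match terms over the three subintervals --- is a genuinely different organization from the paper. The paper instead compares the minimum energies $\mu_\varepsilon$ and $\mu_0$ of the two quadratic functionals: it shows $\mu_\varepsilon=-\tfrac12\|w\|^2_{X_\varepsilon^{1/2}}+I_1[E_\varepsilon u^0]$ (your identity $(*)$ in disguise), then bounds $I_1[E_\varepsilon u^0]\le\mu_0+C(|a_1'-a_1|+\varepsilon^{1/2})$ by evaluating the energy at $E_\varepsilon u^0$, and separately bounds $\mu_\varepsilon\ge\mu_0-C(|l_1'-l_1|+\varepsilon^{1/2})$ by using $M_\varepsilon u^\varepsilon$ as a competitor for $\mu_0$. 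This produces error terms that involve only the uniformly bounded quantities $u^0$ and $u^\varepsilon$, not $w$, so the rate for $\|w\|^2$ follows without an absorption step. Your route would instead produce errors of the form $(\mathrm{error})\cdot\|w\|_{X_\varepsilon^{1/2}}$ and then absorb via Young, and if that accounting went through it would even yield a slightly stronger (squared) rate. So the approach is in principle sound and interesting.

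However, there are two concrete gaps. First, you invoke ``$p_\varepsilon\geq e_j/\varepsilon$ on $I_j$'', but this is false: the large--diffusion lower bound $p_\varepsilon\geq e_1/\varepsilon$ holds only on $[0,x_1-\varepsilon l_1']$, which is a \emph{strict} subinterval of $I_1=[0,x_1-\varepsilon l_1]$ because $l_1'\geq l_1$. On the buffer $[x_1-\varepsilon l_1',x_1-\varepsilon l_1]$ one has only $p_\varepsilon\geq\varepsilon a_1$, so $\int_{x_1-\varepsilon l_1'}^{x_1-\varepsilon l_1}|w_x|^2\,dx$ is controlled only by $\tfrac{1}{\varepsilon a_1}\|w\|^2_{X_\varepsilon^{1/2}}$, which blows up as $\varepsilon\to0$; thus the claimed bound $\|w_x\|_{L^2(I_j)}^2\leq\tfrac{\varepsilon}{e_j}\|w\|^2_{X_\varepsilon^{1/2}}$ fails, and the Poincar\'e--Wirtinger step collapses. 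This buffer region is exactly where the $|l_1'-l_1|$ contribution to the rate enters, and handling it requires splitting at $x_1\pm\varepsilon l_1'$ (as the paper's lower bound on $\mu_\varepsilon$ does), not merely ``bookkeeping.'' Second, the justification for the uniform $L^\infty$ bound via ``$\|w\|_{H^1}\leq C\|w\|_{X_\varepsilon^{1/2}}$'' is incorrect: since $p_\varepsilon\sim\varepsilon a_1$ on the transition interval, the $X_\varepsilon^{1/2}$--norm does \emph{not} control the full $H^1$--seminorm uniformly in $\varepsilon$. The estimate $\|w\|_{L^\infty}\leq C\|w\|_{X_\varepsilon^{1/2}}$ is still true, but the correct reason is the weighted Cauchy--Schwarz bound $|w(x)-w(y)|\leq\bigl(\int_0^1 p_\varepsilon^{-1}\bigr)^{1/2}\bigl(\int_0^1 p_\varepsilon|w_x|^2\bigr)^{1/2}$ together with the uniform estimate $\int_0^1 p_\varepsilon^{-1}\leq C$ (which holds because the small--diffusion window has length $O(\varepsilon)$ while $p_\varepsilon^{-1}=O(1/\varepsilon)$ there). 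Without these corrections the sketch does not constitute a proof.
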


\begin{proof}
The arguments used here was inspired in \cite{Carvalho2010a}.

We denote $E=E_\varepsilon$.  The weak solution $u^\varepsilon$ satisfies,
\begin{equation}\label{weak_solution1}
\int_0^1 p_\varepsilon u^\varepsilon_x \varphi_x \,dx+\int_0^1\lambda u^\varepsilon \varphi\,dx=\int_0^1 g \varphi \,dx,\quad\forall \varphi\in X_\varepsilon^\frac{1}{2},\quad \varepsilon\in (0,\varepsilon_0],
\end{equation}
and
$$
\frac{a_1}{2l_1} u^0_1 \varphi_1^0+\lambda u_1^0\varphi^0_1 x_1-\frac{a_1}{2l_1} u^0_2 \varphi_1^0-\frac{a_1}{2l_1} u^0_1 \varphi_2^0+\frac{a_1}{2l_1} u^0_2 \varphi_1^2+\lambda u_2^0\varphi^0_2(1-x_1)=g_1^0u_1^0x_1+g_2^0u_2^0(1-x_1),
$$
for all $\varphi^0=(\varphi_1^0,\varphi_2^0)\in \R^2$.

We denote
$$
\mu_\varepsilon=\min_{u\in X_\varepsilon^\frac{1}{2}}\Big\{\frac{1}{2}\int_0^1 p_\varepsilon |u_x|^2 \,dx+\frac{1}{2}\int_0^1\lambda |u|^2\,dx-\int_0^1 g u\,dx\Big\},\quad \varepsilon\in (0,\varepsilon_0];
$$
$$
\mu_0=\min_{u\in \R^2}\Big\{\frac{1}{2}\frac{a_1}{2l_1}(u_2-u_1)^2+\frac{1}{2}\lambda |u_1|^2x_1+\frac{1}{2}\lambda |u_2|^2(1-x_1)-g_1u_1x_1-g_2u_2(1-x_1)\Big\}.    
$$
Then 

\begin{align*}
\mu_\varepsilon &=\frac{1}{2}\int_0^1 p_\varepsilon |u_x^\varepsilon|^2 \,dx+\frac{1}{2}\int_0^1\lambda |u^\varepsilon|^2\,dx-\int_0^1 g u^\varepsilon\,dx\\
&=\frac{1}{2}\int_0^1 p_\varepsilon |u_x^\varepsilon-(Eu^0)_x+(Eu^0)_x|^2 \,dx+\frac{1}{2}\int_0^1\lambda |u^\varepsilon-Eu^0+Eu^0|^2\,dx\\
&-\int_0^1 g (u^\varepsilon-Eu^0+Eu^0)\,dx\\
&=\frac{1}{2}\int_0^1 p_\varepsilon |u_x^\varepsilon-(Eu^0)_x|^2\,dx+\int_0^1 p_\varepsilon (u_x^\varepsilon-(Eu^0)_x)((Eu^0)_x)\,dx+\frac{1}{2}\int_0^1 p_\varepsilon|(Eu^0)_x|^2\,dx\\
&+\frac{1}{2}\int_0^1 \lambda |u^\varepsilon-Eu^0|^2\,dx+\int_0^1 \lambda (u^\varepsilon-Eu^0)(Eu^0)\,dx+\frac{1}{2}\int_0^1 \lambda|Eu^0|^2\,dx\\
&-\int_0^1 g (u^\varepsilon-Eu^0)\,dx-\int_0^1 gEu^0\,dx.
\end{align*}
We can write

$$
\int_0^1 p_\varepsilon (u_x^\varepsilon-(Eu^0)_x)((Eu^0)_x)\,dx=-\frac{1}{2}\int_0^1 p_\varepsilon |u_x^\varepsilon-(Eu^0)_x|^2\,dx+\frac{1}{2}\int_0^1 p_\varepsilon (u_x^\varepsilon-(Eu^0)_x)u^\varepsilon_x\,dx;
$$

$$
\int_0^1 \lambda (u^\varepsilon-Eu^0)(Eu^0)\,dx=-\frac{1}{2}\int_0^1 \lambda |u^\varepsilon-Eu^0|^2\,dx+\frac{1}{2}\int_0^1 \lambda (u^\varepsilon-Eu^0)u^\varepsilon\,dx.
$$
Taking $\varphi=u^\varepsilon-Eu^0$ in \eqref{weak_solution1}, we have

$$
\frac{1}{2}\int_0^1 p_\varepsilon (u_x^\varepsilon-(Eu^0)_x)u^\varepsilon_x\,dx+\frac{1}{2}\int_0^1 \lambda (u^\varepsilon-Eu^0)u^\varepsilon\,dx=\int_0^1 g (u^\varepsilon-Eu^0)\,dx.
$$
Thus

$$
\mu_\varepsilon=-\frac{1}{2}\|u^\varepsilon-Eu^0\|_{X_\varepsilon^\frac{1}{2}}^2+\frac{1}{2}\int_0^1 p_\varepsilon|(Eu^0)_x|^2\,dx+\frac{1}{2}\int_0^1 \lambda|Eu^0|^2\,dx-\int_0^1 gEu^0\,dx.
$$
But

\begin{align*}
&I_1:=\frac{1}{2}\int_0^1 p_\varepsilon|(Eu^0)_x|^2\,dx+\frac{1}{2}\int_0^1 \lambda|Eu^0|^2\,dx-\int_0^1 gEu^0\,dx\\
&=\frac{1}{2}\int_{x_1-\varepsilon l_1}^{x_1+\varepsilon l_1} p_\varepsilon\Big[\frac{u^0_2-u^0_1}{\varepsilon 2l_1}\Big]^2\,dx+\frac{1}{2}\int_0^{x_1-\varepsilon l_1} \lambda|u^0_1|^2\,dx+\frac{1}{2}\int_{x_1-\varepsilon l_1}^{x_1+\varepsilon l_1} \lambda|Eu^0|^2\,dx\\
&+\frac{1}{2}\int_{x_1+\varepsilon l_1}^1 \lambda|u^0_2|^2\,dx-\int_0^{x_1-\varepsilon l_1} g_1u^0_1\,dx+\int_0^{x_1-\varepsilon l_1}(g_1-g)u^0_1\,dx-\int_{x_1-\varepsilon l_1}^{x_1+\varepsilon l_1} gEu^0\,dx\\
&-\int_{x_1+\varepsilon l_1}^1 g_2u^0_2\,dx+\int_{x_1+\varepsilon l_1}^1(g_2-g)u^0_2\,dx
\end{align*}
and we can write

$$
\int_0^{x_1-\varepsilon l_1}(g_1-g)u^0_1\,dx=u^0_1\int_0^{x_1}(g_1-g)\,dx-u^0_1\int_{x_1-\varepsilon l_1}^{x_1}(g_1-g)\,dx=-\int_{x_1-\varepsilon l_1}^{x_1}(g_1-g)u_1^0\,dx;
$$

$$
\int_{x_1+\varepsilon l_1}^1(g_2-g)u^0_2\,dx=u^0_2\int_{x_1}^1(g_2-g)\,dx-u^0_2\int_{x_1}^{x_1+\varepsilon l_1}(g_2-g)\,dx=-\int_{x_1}^{x_1+\varepsilon l_1}(g_2-g)u^0_2\,dx.
$$
Thus

\begin{align*}
I_1&\leq \frac{1}{2}\frac{a_1'|u_2^0-u_1^0|^2}{(2l_1)^2\varepsilon}\int_{x_1-\varepsilon l_1}^{x_1+\varepsilon l_1}\,dx+\frac{1}{2}\lambda |u_1^0|^2(x_1-\varepsilon l_1)+\frac{1}{2}\int_{x_1-\varepsilon l_1}^{x_1+\varepsilon l_1} \lambda|Eu^0|^2\,dx\\
&+\frac{1}{2}\lambda |u_2^0|^2(1-x_1-\varepsilon l_1)-g_1^0u_1^0(x_1-\varepsilon l_1)-\int_{x_1-\varepsilon l_1}^{x_1+\varepsilon l_1} gEu^0\,dx-g_2^0u_2^0(1-x_1-\varepsilon l_1)\\
&-\int_{x_1-\varepsilon l_1}^{x_1}(g_1-g)u_1^0\,dx-\int_{x_1}^{x_1+\varepsilon l_1}(g_2-g)u^0_2\,dx\\
&\leq \mu_0+\frac{1}{2}\frac{(a_1-a_1')|u_2^0-u_1^0|^2}{2l_1}+\frac{1}{2}\int_{x_1-\varepsilon l_1}^{x_1+\varepsilon l_1} \lambda|Eu^0|^2\,dx-\frac{1}{2}\lambda|u_1^0|^2\varepsilon l_1-\frac{1}{2}\lambda|u_2^0|^2\varepsilon l_1\\
&+g_1^0u_1^0\varepsilon l_1-\int_{x_1-\varepsilon l_1}^{x_1+\varepsilon l_1} gEu^0\,dx         +g_2^0u_2^0\varepsilon l_1-\int_{x_1-\varepsilon l_1}^{x_1}(g_1-g)u_1^0\,dx-\int_{x_1}^{x_1+\varepsilon l_1}(g_2-g)u^0_2\,dx.
\end{align*}
Therefore

$$
\frac{1}{2}\|u^\varepsilon-E u^0\|_{X_\varepsilon^\frac{1}{2}}^2\leq \mu_0-\mu_\varepsilon +C(|a_1'-a_1|+\varepsilon^\frac{1}{2}).
$$
On the other hand,

\begin{align*}
\mu_\varepsilon &=\frac{1}{2}\int_0^1 p_\varepsilon |u_x^\varepsilon|^2 \,dx+\frac{1}{2}\int_0^1\lambda |u^\varepsilon|^2\,dx-\int_0^1 g u^\varepsilon\,dx\\
&=\frac{1}{2}\int_0^{x_1-\varepsilon l_1'} p_\varepsilon |u_x^\varepsilon|^2 \,dx+\frac{1}{2}\int_{x_1-\varepsilon l_1'}^{x_1+\varepsilon l_1'} p_\varepsilon |u_x^\varepsilon|^2 \,dx+\int_{x_1+\varepsilon l_1'}^1 p_\varepsilon |u_x^\varepsilon|^2 \,dx\\
&+\frac{1}{2}\int_0^{x_1-\varepsilon l_1'}\lambda |u^\varepsilon|^2\,dx+\frac{1}{2}\int_{x_1-\varepsilon l_1'}^{x_1+\varepsilon l_1'}\lambda |u^\varepsilon|^2\,dx+\frac{1}{2}\int_{x_1+\varepsilon l_1'}^1\lambda |u^\varepsilon|^2\,dx\\
&-\int_0^{x_1-\varepsilon l_1'} g u^\varepsilon\,dx-\int_{x_1-\varepsilon l_1'}^{x_1+\varepsilon l_1'} g u^\varepsilon\,dx-\int_{x_1+\varepsilon l_1'}^1 g u^\varepsilon\,dx.
\end{align*}
By H\"{o}lder's inequality, we have

$$
\frac{1}{2}\frac{a_1}{2l_1'}\Big(\int_{x_1-\varepsilon l_1'}^{x_1+\varepsilon l_1'} u_x^\varepsilon \,dx\Big)^2\leq \frac{1}{2}\frac{a_1}{2l_1'}\int_{x_1-\varepsilon l_1'}^{x_1+\varepsilon l_1'} |u_x^\varepsilon|^2 \,dx2\varepsilon l_1'\leq\frac{1}{2}\int_{x_1-\varepsilon l_1'}^{x_1+\varepsilon l_1'} p_\varepsilon |u_x^\varepsilon|^2 \,dx, 
$$
and we also have

$$
\frac{1}{2}\int_0^{x_1-\varepsilon l_1'}\lambda |u^\varepsilon|^2\,dx\geq \int_0^{x_1-\varepsilon l_1'}\lambda(u^\varepsilon-u^\varepsilon_1)u^\varepsilon_1\,dx+\frac{1}{2}\int_0^{x_1-\varepsilon l_1'}\lambda|u^\varepsilon_1|^2\,dx;
$$

$$
\frac{1}{2}\int_{x_1+\varepsilon l_1'}^1\lambda |u^\varepsilon|^2\,dx\geq \int_{x_1+\varepsilon l_1'}^1\lambda(u^\varepsilon-u^\varepsilon_2)u^\varepsilon_2\,dx+\frac{1}{2}\int_{x_1+\varepsilon l_1'}^1\lambda|u^\varepsilon_2|^2\,dx.
$$
Thus
\begin{align*}
\mu_\varepsilon & \geq \frac{1}{2}\frac{a_1}{2l_1'}\Big(\int_{x_1-\varepsilon l_1'}^{x_1+\varepsilon l_1'} u_x^\varepsilon \,dx\Big)^2 +\int_0^{x_1-\varepsilon l_1'}\lambda(u^\varepsilon-u^\varepsilon_1)u^\varepsilon_1\,dx+\frac{1}{2}\int_0^{x_1-\varepsilon l_1'}\lambda|u^\varepsilon_1|^2\,dx \\
&+\int_{x_1+\varepsilon l_1'}^1\lambda(u^\varepsilon-u^\varepsilon_2)u^\varepsilon_2\,dx+\frac{1}{2}\int_{x_1+\varepsilon l_1'}^1\lambda|u^\varepsilon_2|^2\,dx\\
&-\int_0^{x_1-\varepsilon l_1'}gu^\varepsilon\,dx+\int_0^{x_1-\varepsilon l_1'}g_1u^\varepsilon_1\,dx-\int_0^{x_1-\varepsilon l_1'}g_1u^\varepsilon_1\,dx-\int_{x_1-\varepsilon l_1'}^{x_1+\varepsilon l_1'} g u^\varepsilon\,dx\\
&-\int_{x_1+\varepsilon l_1'}^1gu^\varepsilon\,dx+\int_{x_1+\varepsilon l_1'}^1g_2u^\varepsilon_2\,dx-\int_{x_1+\varepsilon l_1'}^1g_2u^\varepsilon_2\,dx.
\end{align*}
But

\begin{align*}
-\int_0^{x_1-\varepsilon l_1'}gu^\varepsilon\,dx&+\int_0^{x_1-\varepsilon l_1'}g_1u^\varepsilon_1\,dx =\int_0^{x_1-\varepsilon l_1'}g(u^\varepsilon_1-u^\varepsilon)\,dx+\int_0^{x_1-\varepsilon l_1'}(g_1-g)u^\varepsilon_1\,dx\\
&=\int_0^{x_1-\varepsilon l_1'}g(u^\varepsilon_1-u^\varepsilon)\,dx-\int_{x_1-\varepsilon l_1'}^{x_1}(g_1-g)u^\varepsilon_1\,dx+u^\varepsilon_1\int_{0}^{x_1}(g_1-g)\,dx\\
&=\int_0^{x_1-\varepsilon l_1'}g(u^\varepsilon_1-u^\varepsilon)\,dx-\int_{x_1-\varepsilon l_1'}^{x_1}(g_1-g)u^\varepsilon_1\,dx.
\end{align*}
In the same way,

$$
-\int_{x_1+\varepsilon l_1'}^1gu^\varepsilon\,dx+\int_{x_1+\varepsilon l_1'}^1g_2u^\varepsilon_2\,dx=\int_{x_1+\varepsilon l_1'}^1g(u^\varepsilon_2-u^\varepsilon)\,dx-\int_{x_1}^{x_1+\varepsilon l_1'}(g_2-g)u^\varepsilon_2\,dx.
$$
Therefore

\begin{align*}
\mu_\varepsilon &\geq\frac{1}{2}\frac{a_1}{2l_1'}[u^\varepsilon(x_1+\varepsilon l_1')-u^\varepsilon(x_1-\varepsilon l_1')]^2+\frac{1}{2}\lambda|u^\varepsilon_1|^2(x_1-\varepsilon l_1')+\frac{1}{2}\lambda|u^\varepsilon_2|^2(1-x_1-\varepsilon l_1')\\
&+\int_0^{x_1-\varepsilon l_1'}\lambda(u^\varepsilon-u^\varepsilon_1)u^\varepsilon_1\,dx+\int_{x_1+\varepsilon l_1'}^1\lambda(u^\varepsilon-u^\varepsilon_2)u^\varepsilon_2\,dx-g_1 u^\varepsilon_1(x_1-\varepsilon l_1')-g_2u^\varepsilon_2(1-x_1-\varepsilon l_1')\\
&+\int_0^{x_1-\varepsilon l_1'}g(u^\varepsilon_1-u^\varepsilon)\,dx-\int_{x_1-\varepsilon l_1'}^{x_1}(g_1-g)u^\varepsilon_1\,dx-\int_{x_1-\varepsilon l_1'}^{x_1+\varepsilon l_1'} g u^\varepsilon\,dx\\
&+\int_{x_1+\varepsilon l_1'}^1g(u^\varepsilon_2-u^\varepsilon)\,dx-\int_{x_1}^{x_1+\varepsilon l_1'}(g_2-g)u^\varepsilon_2\,dx\\
&=\frac{1}{2}\frac{a_1}{2l_1}(u^\varepsilon_2-u^\varepsilon_1)^2+\frac{1}{2}\lambda|u^\varepsilon_1|^2x_1+\frac{1}{2}\lambda|u^\varepsilon_2|^2(1-x_1)-g_1u^\varepsilon_1x_1-g_2u^\varepsilon_2(1-x_1)\\
&-\frac{1}{2}\frac{a_1}{2l_1}(u^\varepsilon_2-u^\varepsilon_1)^2+\frac{1}{2}\frac{a_1}{2l_1'}[u^\varepsilon(x_1+\varepsilon l_1')-u^\varepsilon(x_1-\varepsilon l_1')]^2-\frac{1}{2}\lambda|u^\varepsilon_1|^2\varepsilon l_1'-\frac{1}{2}\lambda|u^\varepsilon_2|^2\varepsilon l_1'\\
&+\int_0^{x_1-\varepsilon l_1'}\lambda(u^\varepsilon-u^\varepsilon_1)u^\varepsilon_1\,dx+\int_{x_1+\varepsilon l_1'}^1\lambda(u^\varepsilon-u^\varepsilon_2)u^\varepsilon_2\,dx+g_1u^\varepsilon_1\varepsilon l_1'+g_2u^\varepsilon_2\varepsilon l_1'\\
&+\int_0^{x_1-\varepsilon l_1'}g(u^\varepsilon_1-u^\varepsilon)\,dx-\int_{x_1-\varepsilon l_1'}^{x_1}(g_1-g)u^\varepsilon_1\,dx-\int_{x_1-\varepsilon l_1'}^{x_1+\varepsilon l_1'} g u^\varepsilon\,dx\\
&+\int_{x_1+\varepsilon l_1'}^1g(u^\varepsilon_2-u^\varepsilon)\,dx-\int_{x_1}^{x_1+\varepsilon l_1'}(g_2-g)u^\varepsilon_2\,dx.
\end{align*}
But
\begin{align*}
\frac{1}{2}\frac{a_1}{2l_1'}[u^\varepsilon(x_1+\varepsilon l_1')-u^\varepsilon(x_1-\varepsilon l_1')]^2&\geq  \frac{1}{2}\frac{a_1}{2l_1'}[2(u^\varepsilon(x_1+\varepsilon l_1')-u^\varepsilon_2+u^\varepsilon_1-u^\varepsilon(x_1-\varepsilon l_1'))(u^\varepsilon_2-u^\varepsilon_1)]\\
&+ \frac{1}{2}\frac{a_1}{2l_1'}(u^\varepsilon_2-u^\varepsilon_1)^2.
\end{align*}
Thus
\begin{align*}
\mu_0-\mu_\varepsilon &\leq C[|l_1-l_1'|+\varepsilon^\frac{1}{2}+|u^\varepsilon(x_1+\varepsilon l_1')-u^\varepsilon_2|+|u^\varepsilon_1-u^\varepsilon(x_1-\varepsilon l_1')|]\\
&+C(\|u^\varepsilon_1-u^\varepsilon\|_{L^\infty(0,x_1)}+\|u^\varepsilon_2-u^\varepsilon\|_{L^\infty(x_1,1)}).
\end{align*}
By Poincar\'e's inequality for average, we have
$$
\|u^\varepsilon_1-u^\varepsilon\|_{L^\infty(0,x_1)}\leq \|u^\varepsilon_x\|_{L^1(0,x_1)}= \int_0^{x_1}|u^\varepsilon_x|\,dx=\int_0^{x_1-\varepsilon l_1'}|u^\varepsilon_x|\,dx+\int_{x_1-\varepsilon l_1'}^{x_1}|u^\varepsilon_x|\,dx\leq C\varepsilon^\frac{1}{2},
$$
where we have used the H\"{o}lder's  inequality and the estimate
\begin{equation}\label{estimate_large_diffusion}
\frac{e_1}{\varepsilon}\int_0^{x_1-\varepsilon l_1'}|u^\varepsilon_x|^2\,dx \leq\int_0^{x_1-\varepsilon l_1'}p_\varepsilon |u^\varepsilon_x|^2\,dx\leq C.
\end{equation}
In the same way, $\|u^\varepsilon_1-u^\varepsilon\|_{L^\infty(x_1,1)}\leq C\varepsilon^\frac{1}{2}$.

Now note that
\begin{align*}
|u^\varepsilon_1-u^\varepsilon(x_1-\varepsilon l_1')|&\leq \frac{1}{x_1}\int_{0}^{x_1}|u^\varepsilon-u^\varepsilon(x_1-\varepsilon l_1')|\,dx \\
& = \frac{1}{x_1}\Big[\int_{0}^{x_1-\varepsilon l_1'}+\int_{x_1-\varepsilon l_1'}^{x_1}|u^\varepsilon-u^\varepsilon(x_1-\varepsilon l_1')|\,dx\Big]\\
&\leq \int_{0}^{x_1-\varepsilon l_1'}|u^\varepsilon-u^\varepsilon(x_1-\varepsilon l_1')|\,dx+C\varepsilon^\frac{1}{2}.
\end{align*}
But, for $x\in [0,x_1-\varepsilon l_1']$, it follows by \eqref{estimate_large_diffusion} that
$$
|u^\varepsilon(x)-u^\varepsilon(x_1-\varepsilon l_1')|\leq \int_{0}^{x_1-\varepsilon l_1'}|u^\varepsilon_x|\,dx\leq \Big(\int_{0}^{x_1-\varepsilon l_1'}|u^\varepsilon_x|^2\,dx \Big)^\frac{1}{2}\leq C\varepsilon^\frac{1}{2}.
$$
Therefore $|u^\varepsilon(x)-u^\varepsilon(x_1-\varepsilon l_1')|\leq C\varepsilon^\frac{1}{2}$. In the same way $|u^\varepsilon_2-u^\varepsilon(x_1+\varepsilon l_1')|\leq C\varepsilon^\frac{1}{2}.$ 

Putting all estimates together we obtain \eqref{convergence_resolvent0}.
\cqd
\end{proof}

As a consequence of the previous result, we have the following result.

\begin{corollary}
There is a positive constant $C$ independent of $\varepsilon$ such that
\begin{equation}\label{estimate_resolvent_ODE}
\|A_\varepsilon^{-1}-E_\varepsilon A_0^{-1}M_\varepsilon \|_{\LL(L^2,X_\varepsilon^\frac{1}{2})}^2\leq C(|a_1'-a_1|+|l_1'-l_1|+ \varepsilon^\frac{1}{2}).
\end{equation}
\end{corollary}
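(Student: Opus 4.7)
The plan is to deduce the corollary as an immediate consequence of the preceding lemma by interpreting the estimate uniformly in the data $g$.

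First I would fix an arbitrary $g\in L^2$ with $\|g\|_{L^2}\leq 1$ and define $u^\varepsilon=A_\varepsilon^{-1}g\in X_\varepsilon^{1/2}$ together with $u^0=A_0^{-1}M_\varepsilon g\in X_0^{1/2}=\R^2$. By the invertibility of $A_\varepsilon$ and $A_0$ (established in the construction of the functional setting) these are well-defined, and $u^\varepsilon$ solves the elliptic problem $A_\varepsilon u^\varepsilon=g$ while $u^0$ solves $A_0u^0=M_\varepsilon g$, which are precisely the hypotheses of the preceding lemma.

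Next I would observe the identity
\begin{equation*}
(A_\varepsilon^{-1}-E_\varepsilon A_0^{-1}M_\varepsilon)g = u^\varepsilon-E_\varepsilon u^0,
\end{equation*}
so that applying the lemma yields
\begin{equation*}
\|(A_\varepsilon^{-1}-E_\varepsilon A_0^{-1}M_\varepsilon)g\|_{X_\varepsilon^{1/2}}^{2}\leq C\bigl(|a_1'-a_1|+|l_1'-l_1|+\varepsilon^{1/2}\bigr),
\end{equation*}
with a constant $C$ independent of $\varepsilon$ and of $g$ in the unit ball of $L^2$.

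Finally, taking the supremum over all $g\in L^2$ with $\|g\|_{L^2}\leq 1$ on the left-hand side gives the operator-norm bound \eqref{estimate_resolvent_ODE}. There is no genuine obstacle here: the entire work has already been done in the preceding lemma, and the corollary is merely a reformulation of that pointwise estimate as a uniform bound on the operator $A_\varepsilon^{-1}-E_\varepsilon A_0^{-1}M_\varepsilon$ acting from $L^2$ into $X_\varepsilon^{1/2}$. The only point requiring attention is to ensure that the constant $C$ from the lemma truly does not depend on the particular $g$ chosen, which is evident from the proof of the lemma since all the estimates there were carried out under the normalization $\|g\|_{L^2}\leq 1$.
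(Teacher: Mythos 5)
Your argument is correct and is exactly the paper's intended deduction: the lemma gives the bound $\|u^\varepsilon-E_\varepsilon u^0\|_{X_\varepsilon^{1/2}}^2\leq C(|a_1'-a_1|+|l_1'-l_1|+\varepsilon^{1/2})$ uniformly over $g$ in the unit ball of $L^2$, and since $u^\varepsilon-E_\varepsilon u^0=(A_\varepsilon^{-1}-E_\varepsilon A_0^{-1}M_\varepsilon)g$, taking the supremum yields the operator-norm estimate. The paper treats the corollary as immediate (compare the identical one-line argument given for the analogous corollary in the spatial-homogenization section), so there is nothing further to add.
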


Finally we can estimate the convergence of the attractors $\mathcal{A}_\varepsilon$ of \eqref{coupled_semilinear_problem}. 

\begin{theorem}
There is an invariant manifold $\mathcal{M}_\varepsilon$ for \eqref{coupled_semilinear_problem} such that $\mathcal{A}_\varepsilon\subset \mathcal{M}_\varepsilon$ and the flow on $\mathcal{A}_\varepsilon$ can be reduced to an ordinary differential equation. Moreover the continuity of attractors of \eqref{coupled_semilinear_problem} can be estimate by   
\begin{equation}\label{estimate_attractor_final_ODE}
\tn{d}_\varepsilon(\mathcal{A}_\varepsilon,E_\varepsilon \mathcal{A}_0)\leq C(|a_1'-a_1|+|l_1'-l_1|+ \varepsilon^\frac{1}{2})^\frac{1}{2}.
\end{equation}
\end{theorem}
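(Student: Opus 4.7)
The plan is to view this theorem as a direct application of Theorem \ref{main_result_theorem_estimate} once the abstract framework of Sections \ref{Functional Setting}--\ref{Shadowing Theory and Rate of Convergence} has been identified in the present concrete setting, with $n=\tn{dim}(X_0)=2$, $X_\varepsilon=L^2(0,1)$, $X_\varepsilon^{\alpha}=X_\varepsilon^{1/2}=H^1(0,1)$ equipped with the $A_\varepsilon$-induced inner product, and $A_\varepsilon, A_0, E_\varepsilon, M_\varepsilon$ as defined in this section.

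\textbf{Step 1 (the connecting operators).} I would first verify the three properties required of $E_\varepsilon$ and $M_\varepsilon$. The identity $M_\varepsilon\circ E_\varepsilon = I_{\R^2}$ is immediate, since $E_\varepsilon(u_1,u_2)$ equals $u_1$ on $[0,x_1-\varepsilon l_1]$ and $u_2$ on $[x_1+\varepsilon l_1,1]$, so averaging over those two intervals returns $(u_1,u_2)$. The norm convergence $\|E_\varepsilon u^0\|_{X_\varepsilon}\to \|u^0\|_{X_0}$ holds because the discrepancy lives on an interval of length $O(\varepsilon)$. Uniform boundedness of $E_\varepsilon$ into $X_\varepsilon^{1/2}$ is the delicate point: on $[x_1-\varepsilon l_1,x_1+\varepsilon l_1]$ one has $p_\varepsilon\leq \varepsilon a_1'$ and $|(E_\varepsilon u^0)_x|=|u_2-u_1|/(2\varepsilon l_1)$, so
\[
\int_{x_1-\varepsilon l_1}^{x_1+\varepsilon l_1} p_\varepsilon\, |(E_\varepsilon u^0)_x|^2\,dx \leq \frac{a_1'\,(u_2-u_1)^2}{2 l_1} \leq C \|u^0\|_{X_0}^2,
\]
and the low-order term is trivially controlled. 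Uniform boundedness of $M_\varepsilon$ follows from Jensen's inequality. Compact convergence $A_\varepsilon^{-1}\overset{\tn{CC}}\longrightarrow A_0^{-1}$ is then a consequence of the preceding Lemma (convergence of $u^\varepsilon$ to $E_\varepsilon u^0$ in $X_\varepsilon^{1/2}$) combined with the uniform $H^1$-bound on $A_\varepsilon^{-1}g$, which supplies relative compactness in the sense of Definition \ref{compact_convergence}.

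\textbf{Step 2 (identification of the rates).} From \eqref{estimate_resolvent_ODE} we may set
\[
\tau(\varepsilon) := C\bigl(|a_1'-a_1|+|l_1'-l_1|+\varepsilon^{1/2}\bigr)^{1/2},
\]
which is the role that the exponent $1/2$ plays in the final estimate. For \eqref{rate_nonlinearity}, since $f_\varepsilon$ and $f_0$ arise from the same Nemitskii functional, I split
\[
\|f(u^\varepsilon)-E_\varepsilon f_0(u^0)\|_{L^2}\leq \|f(u^\varepsilon)-f(E_\varepsilon u^0)\|_{L^2}+\|f(E_\varepsilon u^0)-E_\varepsilon f_0(u^0)\|_{L^2}.
\]
The first summand is controlled by the (global) Lipschitz constant of $f$ times $\|u^\varepsilon-E_\varepsilon u^0\|_{X_\varepsilon^{1/2}}$. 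The second summand vanishes on $[0,x_1-\varepsilon l_1]\cup[x_1+\varepsilon l_1,1]$ (both integrands equal $f(u_1)$, respectively $f(u_2)$, there); on the middle interval of length $2\varepsilon l_1$ both integrands are uniformly bounded, so its $L^2$-norm is $O(\varepsilon^{1/2})$. Thus $\rho(\varepsilon)=C\varepsilon^{1/2}$, which is dominated by $\tau(\varepsilon)$.

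\textbf{Step 3 (conclusion).} With the standing Morse--Smale assumption on $T_0$ and the uniform $X_\varepsilon^{1/2}$-bound on the attractors in place, Theorem \ref{estimate_invariant_manifold} produces the invariant manifold $\mathcal{M}_\varepsilon\supset \mathcal{A}_\varepsilon$ on which the flow reduces to the $2$-dimensional system \eqref{reducted_solution_edo_reaction_diffusion}. Theorem \ref{main_result_theorem_estimate} then yields
\[
\tn{d}_\varepsilon(\mathcal{A}_\varepsilon,E_\varepsilon\mathcal{A}_0)\leq C(\tau(\varepsilon)+\rho(\varepsilon))\leq C\bigl(|a_1'-a_1|+|l_1'-l_1|+\varepsilon^{1/2}\bigr)^{1/2}.
\]
The main obstacle is really the uniform boundedness of $E_\varepsilon$ in the weighted $X_\varepsilon^{1/2}$-norm; this rests on the crucial cancellation between the smallness of $p_\varepsilon$ (of order $\varepsilon$) on the transition interval and the largeness of the gradient of the linear interpolant (of order $\varepsilon^{-1}$) there. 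Every other step is routine once the connecting operators have been shown to satisfy the abstract hypotheses, and the rate-exponent $1/2$ inherited from the square-root in \eqref{estimate_resolvent_ODE} propagates unchanged through the main theorem, reflecting the no-loss principle advertised in the introduction.
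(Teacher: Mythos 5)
Your proposal is correct and follows exactly the route the paper intends: the paper's own proof is a one-sentence appeal to the abstract framework of Sections \ref{Functional Setting}--\ref{Shadowing Theory and Rate of Convergence}, and what you have done is carry out the verification it leaves implicit --- checking $M_\varepsilon E_\varepsilon=I$, the uniform boundedness of $E_\varepsilon$ into $X_\varepsilon^{1/2}$ via the cancellation $p_\varepsilon\cdot|(E_\varepsilon u^0)_x|^2\sim \varepsilon\cdot\varepsilon^{-2}$ on the transition interval, the identification $\tau(\varepsilon)=C(|a_1'-a_1|+|l_1'-l_1|+\varepsilon^{1/2})^{1/2}$ from \eqref{estimate_resolvent_ODE}, and $\rho(\varepsilon)=O(\varepsilon^{1/2})$ by splitting the Nemitskii term across the middle interval --- before invoking Theorems \ref{estimate_invariant_manifold} and \ref{main_result_theorem_estimate}. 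Same approach, just fully spelled out.
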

\begin{proof}
Since $\{A_\varepsilon\}_{\varepsilon\in [0,\varepsilon_0]}$ is a family of positive self adjoint linear operator such that \eqref{estimate_resolvent_ODE} is valid, we can apply the Sections \ref{Functional Setting}, \ref{Compact Convergence}, \ref{Invariant Manifold}, \ref{Rate of Convergence}  and \ref{Shadowing Theory and Rate of Convergence} to obtain the estimate \eqref{estimate_attractor_final_ODE}.  
\cqd
\end{proof}

\bibliographystyle{abbrv}
\bibliography{../Jabref/References}
\end{document}